\date{\today}
\numberwithin{equation}{section}%
\newtheorem{theorem}{Theorem}[section]
\newtheorem{proposition}{Proposition}[section]
\newtheorem{lemma}{Lemma}[section]
\newtheorem{definition}{Definition}[section]
\newtheorem{corollary}{Corollary}[section]
\newtheorem{hypothesis}{Hypothesis}[section]
\theoremstyle{definition}
\newtheorem{remark}{Remark}[section]
\DeclareMathOperator{\tr}{tr}
\DeclareMathOperator{\sinc}{sinc}
\DeclareMathOperator{\STD}{STD}
\DeclareMathOperator{\VAR}{VAR}
\DeclareMathOperator{\MEAN}{MEAN}
\DeclareMathOperator{\supp}{supp}
\DeclareMathOperator{\acor}{ACor}
\DeclareMathOperator{\acov}{ACov}
\DeclareMathOperator{\WFH}{WF_{\it h}}
\newcommand{\eps}{\varepsilon}
\newcommand{\Ra}{\mathcal{R}}
\renewcommand{\sc}{semi\-classical}
\newcommand{\R}{{\bf R}}
\DeclareMathOperator{\Id}{Id}
\renewcommand{\r}[1]{(\ref{#1})}
\renewcommand{\Xi}{B}
\newcommand{\PDO}{$\Psi$DO}
\newcommand{\HPDO}{$h$-$\Psi$DO}
\newcommand{\be}[1]{\begin{equation}\label{#1}}
\newcommand{\ee}{\end{equation}}
\renewcommand{\d}{\mathrm{d}}
\renewcommand{\i}{\mathrm{i}}
\newtheorem{example}{Example}
\newcommand{\B}{B}
\title[Sampling with noise]{Sampling  linear inverse problems with noise}
\author[P. Stefanov]{Plamen Stefanov}
\address{Department of Mathematics, Purdue University, West Lafayette, IN 47907}
\thanks{P.S.\ partially supported by the National Science Foundation under
grant DMS-1900475.}
\author[S. Tindel]{Samy Tindel}
\address{Department of Mathematics, Purdue University, West Lafayette, IN 47907}
\thanks{S.T.\ partially supported by the National Science Foundation under
grant DMS-1952966.}
\begin{document}
\begin{abstract}
We study the effect of additive noise to the inversion of FIOs associated to a diffeomorphic canonical relation. We use the microlocal defect measures to measure the power spectrum of the noise in the phase space and analyze how that power spectrum is transformed under the inversion. In general, white noise, for example, is mapped to noise depending on the position and on the direction. 
In particular, we compute the standard deviation, locally, of the noise added to the inversion as a function of the standard deviation of the noise added to the data. As an example, we study the Radon transform in the plane in parallel and fan-beam coordinates, and present numerical examples. 
\end{abstract} 
\maketitle

\section{Introduction}  
The purpose of this work is to study how noise in discrete measurements affects the reconstruction in linear inverse problems 
\be{1}
Af= g,
\ee 
where $A$ is a Fourier Integral Operator (FIO). Examples are the Radon transform and the geodesic X-ray transforms in two dimensions, at least, thermoacoustic tomography, and the linearization of various non-linear inverse problems like boundary and lens rigidity, inverse scattering problems like inverse back-scattering, etc.  We assume that $A$ is associated with a local diffeomorphism (which condition can be relaxed to the clean intersection condition in principle), and elliptic. Then a parametrix exists, which we will denote by $A^{-1}$, also an FIO of the same type. One can regard the problem as propagation of noise under FIOs, rather than under their inverses but we keep the former point of view. 

We want to emphasize that we are not trying to remove noise. That would be only possible with a priori, say statistical information about $f$, but this is not the goal of this work. On the other hand, understanding well the structure of the noise under the action of the inverse would allow for better understanding of what part of $f$ (in phase space) is most affected by noise and would hopefully allow for more efficient noise reduction. 

We study additive noise first. Such noise is typically created by noisy detectors which add certain constant (but usually low) noise to the signal or by background noise.  
  In section~\ref{sec_NA_noise} we study  examples of non-additive noise:  multiplicative noise, Poisson noise as an example of modulation noise, and noise appearing in CT scan. 
In case of additive noise, we are given the noisy data $g+g_\text{noise}$, where $g_\text{noise}$ (a function) is the noise. Then we are trying to solve 
\be{2}
Af = g+g_\text{noise}
\ee
instead. The right-hand side (r.h.s.) may not be in the range of $A$ so a solution may not even exist. What is often done is to apply the adjoint (assuming some Hilbert structure)
\[
A^*Af = A^*g+A^*g_\text{noise},
\]
which automatically cuts the part of $g_\text{noise}$ perpendicular to the range of $A$, 
and then invert $A^*A$, assuming that $A$ is injective in the first place. If not, we invert $A^*A$ on its  range. This can be viewed also as 
the least squares approximation, and is what the Landweber iteration does, for example. So the inversion is
\be{4}
f_\text{noise} = (A^*A)^{-1}A^*g+(A^*A)^{-1}A^*g_\text{noise} = f+ (A^*A)^{-1}A^*g_\text{noise} . 
\ee
Of course, we could do a different ``inversion''. One way to do it to choose a different Hilbert structure.  What is described above is very common however and it is known as the Moore-Penrose inverse. We do not have to assume that the inversion is the Moore-Penrose inverse; it could be any parametrix of $A$, and the Moore-Penrose inverse is such a parametrix under the assumptions we made on $A$.

With the above considerations in mind, we can think of the added noise as 
\be{5}
f_\text{noise}= A^{-1} g_\text{noise},
\ee
where, as above, $A^{-1}$ is a parametrix, and $A^{-1} g_\text{noise}$ is well-defined even if $g_\text{noise}$ is not in the range of $A$. This is also the so described solution of 
\[
Af_\text{noise}=g_\text{noise}. 
\]
We can drop the $f_\text{noise}$ and the $g_\text{noise}$ notation now and just study \r{1} with $g$ not necessarily in the range of $A$, i.e., $g$ is the noise now. 

\begin{example}
The example we will use in this paper is the Radon transform $\mathcal{R}$ in $\R^2$
\be{7}
\mathcal{R}f(\omega,p)=\int_{x\cdot\omega=p}f(x)\,\d\ell, \qquad p\in\R, \; \omega\in S^1,
\ee
where $\d\ell$ is the Euclidean line measure. 
It is written in ``parallel geometry'' coordinates. We  study this example in more detail in section~\ref{sec_R_p}; and in section~\ref{sec_FB}, we will study the same problem for the Radon transform in fan-beam coordinates. It is known that $\mathcal{R}$ is an FIO of order $1/2$ with a canonical relation a graph of a local diffeomorphism (1-to-2). The most popular inversion formula is the ``filtered back projection''
\be{8}
f = \frac1{4\pi}\mathcal{R}' |D_p|g, \quad g= \mathcal{R} f,
\ee
where $\Ra'$ is the transpose in distribution sense; and its versions with adding an  additional filter. We view \r{8} as a unfiltered inversion and that with an additional filter, see \r{3.8}, as a filtered one. 
Now, one can define a norm in the $g$ space by $\| |D_p|^{1/2}g\|_{L^2(\R\times S^1)}$. Then $\mathcal{R}^*= \mathcal{R}'|D_p|$ and \r{8} takes the form $f=(4\pi)^{-1}\mathcal{R}^*g$. Formula \r{8} is used all the time with noisy data not in the range of $\mathcal R$. In addition, we have $\Ra^*\Ra=4\pi\Id$. Therefore, the relation $f=(4\pi)^{-1}\Ra^*g$ can be recast as $f= (\Ra^*\Ra)^{-1}\Ra^* g$, which is exactly \r{4}. 

On the other hand, we may assume that the natural space for $g$ is $L^2(\R\times S^1)$. Then $\mathcal{R}^*\mathcal{R}=4\pi|D|^{-1}$; then the inversion is 
\[
f = \frac1{4\pi}|D|\mathcal{R}' g, \quad g= \mathcal{R} f.
\]
This inversion formula is equivalent to \r{8}. Note that the inverse is a version of \r{4} again. 
\end{example}

Assume that in discrete measurements, the added noise consists of random variables with a known autocorrelation. The simplest case is independent  identically distributed (i.i.d.) random variables at each ``pixel'' (white noise). The distribution could be Gaussian, uniform, etc. 
We convert the discrete measurements to a function on a ``continuous'', space, i.e., locally a function on $\R^n$. Then we invert the data by applying a parametrix, as in \r{5}. The discretization rate is assumed to be proportional to a small parameter $h>0$ and we are interested in the asymptotic properties as $h\to0$. 
Our main goal is a characterization of the induced noise $f_\text{noise}$ after the inversion. 

The novelties of our approach are the following. First, we view discretization and the inverse process --- interpolation from a given discretization, as the step size tends to $0$, in the \sc\ setting, where the small parameter $h>0$ is proportional to the step size. This point of view was proposed in the first author's paper \cite{S-Sampling}. This allows us to use tools from \sc\ analysis to estimate the sharp sampling rate of $Af$, knowing the band limit of $f$, characterize aliasing artifacts during inversion if $Af$ is undersampled, give a sharp limit of the resolution, etc. In this paper, we assume that we do not undersample $Af$. 

The second novelty is moving the analysis of the spectral character of the noise to the phase space; roughly speaking, instead of localizing in the dual variable $\xi$  only, to localize in both the spatial one $x$, and $\xi$.  
 In the applied literature, there are two main ways to characterize noise: through its standard deviation (which assigns just one number) and through its \textit{power spectral density} (or power spectrum). The latter is $|\hat f(\xi)|^2$, where $f$ is the noise, as a function of the frequency $\xi$. Knowing that, we can recover the standard deviation as well, by Parseval's identity. Even though not always explicitly stated, when the noise is not expected to be homogeneous (translation invariant), one can localize in the base variable $x$ by taking the modulus squared of the windowed Fourier transform $|\widehat{\phi f}(\xi)|^2$ with some $\phi\in C_0^\infty$. We propose going one step further: consider the power spectrum in the phase space of points $x$ and (co)directions $\xi$. With the presence of the small parameter $h$, the natural framework is the \sc\ analysis again. 
The \sc\ version of localizing both in space and momentum is to localize near some $x_0$ in the $x$ space with  a smooth cutoff of size $h^{1/2}$ and then take the Fourier transform with $\xi$ replaced by $\xi/h$, see \cite{Zworski_book} for a discussion. The natural candidate of the power spectrum in the phase space then  would be the so-called \sc\ defect measure $\d\mu(x,\xi)$ which, roughly speaking, measures the spectral content of $f= f_h(x)$ in the phase space. We call that measure power spectrum as well.

The third novelty is looking at the noise in ergodic sense, which we also call ``spatial'', i.e., the noise in one measurement. There are two ways to look at the statistical properties of the noise. First, one might be interested in the expected value of the noise pointwise as we keep repeating the same experiment over and over again (in our context, if we have a series of noisy data sets and do an inversion for each one of them). We call this ``temporal'' view, and the analysis of the temporal properties is easier. 
In applications, we have one such experiment however. 
Our goal is to analyze the statistics of the noise in the inversion for a single experiment, as the sampling rate gets smaller and smaller, hence the term ``spatial''. 
In statistics, an estimate with a single experiment is possible when the variables are i.i.d., and we rely on the ergodic properties of the sequence.  

We start with analysis of discrete white noise. The flatness of its spectrum in temporal sense, see \r{d1-2}, is well-known, which justifies its name. In spatial (ergodic) sense, this is true only in a certain averaged sense, see Theorem~\ref{thm_d1}. For the white noise interpolated to a ``continuous'' function, we show that the defect measure $\d\mu$ is flat as well in Theorem~\ref{thm_m}. In Theorem~\ref{thm_m2} we study the spectrum of more general, correlated noise. 

Next, we study propagation of noise under FIOs $A^{-1}$ (or simply $A$) of the mentioned type. With the \sc\ view of noise and is power spectrum, the analysis of the power spectrum of the result $A^{-1}g$ is reduced to the mapping property of a (\sc) defect measure under a (classical) FIO. The answer is given by the Egorov's theorem with some extra care of the zero section. Then the tools described above would allow us the characterize the spectrum of the resulted noise in the reconstruction. We want to emphasize that even if we start with white noise $g$, which has a flat spectrum, the noise $A^{-1}g$ is not homogeneous in general --- its power spectrum depends on the position $x$ and the codirection $\xi$. In particular, its standard deviation may change from a neighborhood of one point to another.

As we mentioned already, our analysis is not restricted to (additive) white noise only, see also section~\ref{sec_NA_noise} for non-additive noise. We can have data with added non-white noise as well, as long as its power density in the general sense we consider it, is well defined. It could be pink, blue noise, if can be anisotropic noise, varying from point to point, or even noise corresponding to a non-absolutely continuous defect measure. For example, we may have the Radon transform $\Ra f(p,\omega)$ with added noise depending on one of those two variables only, then the associated measure would be singular. Theorems~\ref{thm_m} and \r{thm_dmu} still apply and describe the power density of the noise in the reconstruction.

Instead of developing the general abstract theory further, we present its application to the inversion of the Radon transform in the plane. In ``parallel geometry'', we show that the spectral density of the added noise is independent of the position $x$ and proportional to $|\xi|^{1/2}$ up to the Nyquist limit (and the spectral power, which is the square of the density, is proportional to $|\xi|$). In ``fan-beam coordinates'', the noise depends on the position $x$, on $|\xi|$ proportional to $|\xi|^{1/2}$  again but depends on the direction of $\xi$ (relative to $x$) as well. We present many numerical simulations.

Noise is a major concern in the applied inverse problems and has been considered in the literature; nevertheless, we are not aware of directly related works. We will mention only a few more theoretical works about noise and inverse problems. Reconstruction of Riemannian manifolds with noisy data has been studied in  \cite{FeffermanILN}. Using noise a source for a reconstruction has been studied in   \cite{deVerdiere2011semi, Helin_LOS, deVerdiere2009passive,HelinLO}. 

The structure of the paper is as follows. In section~\ref{sec_SC}, we recall some basic facts about \sc\ analysis, needed for our exposition. We also study the relation between classical and \sc\ FIOs. In section~\ref{sec_sampling}, we summarize and develop further some of the results in \cite{S-Sampling}  about sampling in the \sc\ limit. In Theorem~\ref{thm_m} in section~\ref{sec_4},   we prove that the power spectral density of white noise is uniform, by computing its microlocal defect measure. We also show that more general noise satisfying some assumptions, has a well defined microlocal defect measure as well. 
Then we apply Egorov's theorem to describe how that measure transforms under FIOs associated with a canonical diffeomorphism. Sections~\ref{sec_R_p} and \ref{sec_FB} are devoted to an application of the theory to the Radon transform on the plane in parallel and to fan-beam coordinates. We present many numerical examples as well. Multiplicative noise and other type of noise are analyzed in section~\ref{sec_NA_noise}.  Finally, in section~\ref{sec_8}, we analyze discrete white noise without converting it to noise of a continuous variable. We show that it has flat spectrum \textit{on average}.

\textbf{Acknowledgments.} The authors would like to thank Kiril Datchev for his advice and Magda Peligrad for making us aware of the reference  \cite{hu-taylor97}.

\section{Preliminaries on \sc\ analysis}\label{sec_SC}
We recall some basic facts from \sc\ analysis. For more details, we refer to \cite{DimassiSj_book, Martinez_book, Zworski_book}. Before that, a few words about the notation. All norms $\|\cdot\|$ are in $L^2$ unless indicated otherwise; also $\langle \xi\rangle := (1+|\xi|^2)^{1/2}$. We denote by $\mathcal{S}$ the Schwartz class; and $\mathcal{E}'$ is the space of the compactly supported distributions. For a linear operator $A$, $A'$ is the transpose in distribution sense, while $A^*$ is the $L^2$-adjoint. 

\subsection{Semiclassical wave front set} 
The \sc\ Fourier transform $\mathcal{F}_h f$ in $\R^n$ of a function depending also on $h>0$  is given by
\[
\mathcal{F}_hf(\xi) =\int e^{-\i x\cdot\xi/h} f(x)\,\d x.
\]
 Its inverse is $ (2\pi h)^{-n}\mathcal{F}_h^*$. 
We recall the definition of the semiclassical wave front set of a tempered $h$-depended distribution first. In this definition, $h>0$ can be arbitrary but in \sc\ analysis,  $h\in (0,h_0)$ is a ``small'' parameter and we are interested in the behavior of functions and operators as $h$ gets smaller and smaller. Those functions are $h$-dependent and we use the notation $f_h$ or $f_h(x)$ or just $f$. 
The  Sobolev spaces are the semiclassical ones defined by the norm
\[
\|f\|^2_{H_h^s}= (2\pi h)^{-n}\int \langle\xi\rangle^{2s}|\mathcal{F}_hf(\xi)|^2\,\d\xi. 
\]
Then an $h$-dependent family $f_h\in\mathcal{S}'$ is said to be $h$-tempered (or just tempered) if $\|f_h\|_{H^s_h}=O(h^{-N})$ for some $s$ and $N$. All functions in this paper are assumed tempered even if we do not say so. The \sc\ wave front set of a tempered family $f_h$ is the complement of those $(x_0,\xi_0)\in\R^{2n}$ for which there exists a $C_0^\infty$ function $\phi$ so that $\phi(x_0)\not=0$ so that
\[
\mathcal{F}_h(\phi f_h)= O(h^\infty)\quad \text{for $\xi$ in a neighborhood of $\xi_0$}
\]
in $L^\infty$ (or in any other ``reasonable'' space, which does not change the notion). The \sc\ wave front set naturally lies in $T^*\R^n$ but it is not conical as in the classical case. Elements of the zero section can be in $\WFH(f)$. 

Sj\"ostrand proposed essentially adding the classical wave front set to $\WFH$ by considering the latter in $T^*\R^{n}\cup S^* \R^{n}$, where the second space (the unit cosphere bundle)  represents $T^*\R^n$ as a conic set, i.e., each $(x,\xi)$ with $\xi$ unit is identified with the ray $(x,s\xi)$, $s>0$. Their points are viewed as ``infinite'' ones describing the behavior as $\xi\to\infty$ along different directions.  An infinite point  $(x_0,\xi_0)$ does not belong to the so extended $\WFH(f)$ if we have
\be{infWF}
\mathcal{F}_h(\phi f_h)= O(h^\infty\langle \xi \rangle^{-\infty})\quad  \text{for $\xi$ in a conical neighborhood of $\xi_0$}
\ee
with $\phi$ as above.

\subsection{Semiclassical pseudo-differential operators ($h$-\PDO s)} 
We define the symbol class $S^{m.k}$ of symbols in $ \R^n$  as the smooth functions $p(x,\xi)$ on $\R^{2n}$, depending also on $h$, satisfying the symbol estimates
\be{hpdo1}
|\partial_x^\alpha \partial_\xi^\beta p(x,\xi)|\le C_{\alpha,\beta,K} h^k \langle \xi\rangle^{m},
\ee
for $x$ in any compact set $K$, see, e.g., \cite{Gerard}. In fact, we are going to work with symbols supported in a fixed compact set in the $\xi$ variable, so the behavior in $\xi$ above does not matter; one may also work with the symbol class $h^k S^m(1)$, see \cite{Martinez_book, Zworski_book} where $ S^m(1)$ is defined as \r{hpdo1} with $k=m=0$. 
Given $p\in S^{m}$, we write $P=P_h=p(x,hD)$ with 
\be{hPDO}
Pf(x) = (2\pi h)^{-n}\iint e^{\i (x-y)\cdot\xi/h} p(x,\xi) f(y)\,\d y\, \d\xi,
\ee
where the integral has to be understood as an oscillatory one. This is the standard quantization; sometimes it is convenient to work with the Weyl one $p^{\rm w}(x,hD)$,  where $p(x,\xi)$ is replaced by $p((x+y)/2, \xi)$ in \r{hPDO}. Then real symbols correspond to symmetric operators, in particular. 
Negligible operators are those with $O(h^\infty)$ norms in any pair of Sobolev spaces.

\subsection{Semiclassically band limited functions} 
In  \cite{Zworski_book}, it is said that a tempered  $f_h$  is localized in phase space, if there exists $p\in C_0^\infty(\R^{2n})$ so that
\[
(\Id - p(x,hD))f_h=O(h^\infty),\quad \text{in $\mathcal{S}(\R^n)$}.
\]
All functions in this paper will be of this type.   

It is convenient to introduce the notation $\Sigma_h(f)$ for the \sc\ frequency set of $f$.
\begin{definition}\label{def_S} For each tempered $f_h$ localized  in phase space, set 
\[
\Sigma_h(f) = \{\xi;\; \text{$\exists x$ so that $(x,\xi)\in\WFH(f)$}\}.
\]
\end{definition}
In other words, $\Sigma_p$ is the projection of $\WFH(f)$ to the second variable, i.e.,
\[
\Sigma_h(f)= \pi_2\circ \WFH(f),
\]
where $\pi_2(x,\xi)=\xi$. 
 If $\WFH(f)$ (which is always closed) is bounded and therefore compact, then $\Sigma_h(f)$ is compact.

In \cite{S-Sampling}, we gave the following definition.

\begin{definition}\label{def_B}
We say that   $f_h\in C_0^\infty(\R^n)$  is \sc ly band limited (in $\mathcal{B}$),  
if (i) $\supp f_h$ is contained in an $h$-independent compact set, 
(ii) $f$ is tempered,  and (iii) 
there exists a compact set  $\mathcal{B}\subset\R^n$,  so that  for every open $U\supset \mathcal{B}$, we have 
\[
|\mathcal{F}_h f(\xi)|\le  C_N h^N\langle\xi\rangle^{-N}\quad \text{for $\xi\not\in U$}
\]
for every $N>0$. 
\end{definition}

We showed in \cite{S-Sampling} that $f_h$ is \sc\ band limited if and only if it is localized in phase space and if and only of $\WFH(f)$ is finite (no points of the type \r{infWF}) and compact. 

 In applications, we  take $\mathcal{B}$ to be $[-B,B]^n$ with some $B>0$ or the ball $|\xi|\le B$.

An example of \sc ly band limited functions can be obtained by taking any $f\in\mathcal{E}'(
\R^n)$ and convolving if with $\phi_h=h^n\phi(\cdot/h)$ with $\supp\hat\phi\in C_0^\infty$. Then $\phi_h*f$ is \sc ly band limited with $\mathcal{B}=\supp\hat\phi$.

\subsection{Classical \PDO s as \sc\ \PDO s} In the applications we have in mind, we deal with classical \PDO s and FIOs and want to treat them as \sc\ ones. The negligible operators in the classical calculus are the smoothing ones. We showed in \cite{S-Sampling} that for every $f\in \mathcal{E}'(\R^n)$ and for every smoothing $K$, we have $\WFH(Kf)\subset \R^n\times \{0\}$. Next, every classical \PDO\ of order $m$ can be written as an oscillatory integral of the kind \r{hPDO} with $h=1$ and a symbol $a(x,\xi)$ vanishing for $|\xi|\le1$, plus a smoothing operator. Then formally, that oscillatory integral is an \HPDO\ with symbol $a(x,\xi/h)$. Then we can replace $\langle\xi \rangle$ in \r{hpdo1} by $|\xi|$ to obtain an equivalent estimate, and
\[
|\partial_x^\alpha \partial_\xi^\beta a(x,\xi/h)|\le C_{\alpha,\beta,K} h^{-|\beta|} |\xi/h|^{m-|\beta|}=  C_{\alpha,\beta,K} h^{-m} |\xi|^{m-|\beta|}.
\]
On the support of the symbol, we have $|\xi|\ge h$, therefore the factor $|\xi|^{m-|\beta|}$ is not uniformly bounded near $\xi=0$ when $m<|\beta|$. On the other hand, it is uniformly bounded when $|\xi|\ge\eps$ with $\eps>h$. This allowed us in \cite{S-Sampling}, for every $\eps>0$, to split $a(x,D)$ into an \HPDO\ with symbol $a(x,\xi/h)(1-\chi(\xi/\eps))$ with some cut-off function $\chi\in C_0^\infty$ plus an operator mapping \sc ly band limited functions into functions with \sc\ wave front set in an $O(\eps)$ neighborhood of the zero section. We show below that we can do the same thing for FIOs associated with canonical diffeomorphisms.

Let $A$ be a properly supported FIO with a canonical relation which is a graph of a homogeneous canonical transformation. Then microlocally, $A$ is of the form
\be{A00}
Af(x) = (2\pi)^{-n} \iint e^{\i(\phi(x,\eta)-y\cdot\eta)}a(x,\eta)f(y)\,\d y\,\d\eta,
\ee
see \cite[sec.25.3]{Hormander4}, with $a$ a classical symbol and a phase $\phi(x,\eta)$ homogeneous in $\eta$ of odder $1$, satisfying $\det\phi_{x\eta}\not=0$, $\phi_x\not=0$ for $\eta\not=0$. 
Let $\psi\in C_0^\infty$ have support in $B(0,2)$, $\psi=1$ on $B(0,1)$,  and fix $\eps>0$. Then $A=A_{h,\eps} + R_{h,\eps}$, where
\be{AR}
\begin{split}
A_{h,\eps}f(x) &=  \left[A(\Id-\psi(hD/\eps))f\right] (x)\\
& = (2\pi h)^{-n} \iint e^{\i(\phi(x,\eta)-y\cdot\eta)/h}a(x,\eta/h)(1-\psi(\eta/\eps)) f(y)\,\d y\,\d\eta,\\
R_{h,\eps}f(x) &= [A \psi(hD/\eps)f](x) = (2\pi h)^{-n} \iint e^{\i(\phi(x,\eta)-y\cdot\eta)/h}a(x,\eta/h)\psi(\eta/\eps) f(y)\,\d y\,\d\eta.
\end{split}
\ee

\begin{theorem}\label{thm_c-sc}
Under the assumptions above, 

(a) The operator $A_{h,\eps}$ is an $h$-FIO with a (\sc) canonical relation the same as the (classical) one of $A$. Moreover, for every \sc ly band limited $f$ with $\WFH(f)\cap (\R^n\times B(0,\eps))=\emptyset$, we have $Af=A_{h,\eps}f+O(h^\infty)$. 

(b) 
For every  $f_h\in \mathcal{E}'(\R^n)$ with $|\mathcal{F}_hf_h|\le Ch^{-N}$ for some $N$, we have $\WFH(R_{h,\eps}f )\subset \R^n\times B(0,C\eps)$ with some $C>0$. 
\end{theorem}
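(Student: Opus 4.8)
The plan is to work throughout with $A$ in its microlocal model \r{A00} and to manipulate $A_{h,\eps}$ and $R_{h,\eps}$ directly as \sc\ oscillatory integrals in the rescaled frequency variable. The algebraic fact that makes this go is that $\phi$ is homogeneous of degree one in $\eta$, so $\phi(x,\eta/h)=\phi(x,\eta)/h$; substituting $\eta\mapsto\eta/h$ in \r{A00} is exactly what produces the two integrals in \r{AR}. Any smoothing remainder implicit in \r{A00} contributes, for the inputs we use, only to the zero section (as in the discussion preceding the theorem), and so may be ignored for both parts.

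For part (a) I would first check that $\Phi(x,y,\eta):=\phi(x,\eta)-y\cdot\eta$ is a nondegenerate \sc\ phase function --- the $dy_j$-components of $d(\partial_{\eta_j}\Phi)$ are already linearly independent --- and compute its critical set $\{\partial_\eta\Phi=0\}=\{y=\phi_\eta(x,\eta)\}$, which maps to $T^*\R^n\times T^*\R^n$ via $(x,y,\eta)\mapsto(x,\partial_x\Phi;\,y,-\partial_y\Phi)=(x,\phi_x(x,\eta);\,\phi_\eta(x,\eta),\eta)$. Since $\det\phi_{x\eta}\neq0$, the relation $y=\phi_\eta(x,\eta)$ solves for $x=x(y,\eta)$ and the resulting set is the graph of a canonical diffeomorphism, namely the classical canonical relation of $A$ read inside $T^*\R^n\times T^*\R^n$; the cutoff $1-\psi(\eta/\eps)$ only restricts to $|\eta|\ge\eps$, which is immaterial because that relation is conic. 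Next I verify the amplitude is admissible: taking the classical symbol $a$ of order $m$ to vanish for $|\eta|\le1$, one has on $|\eta|\ge\eps$ that $|\partial_x^\alpha\partial_\eta^\beta a(x,\eta/h)|=h^{-|\beta|}|(\partial_x^\alpha\partial_\eta^\beta a)(x,\eta/h)|\le C h^{-m}|\eta|^{m-|\beta|}\le C_{\alpha\beta}h^{-m}\langle\eta\rangle^{m}$, so $a(x,\eta/h)(1-\psi(\eta/\eps))$ is a \sc\ amplitude of the type \r{hpdo1} up to the fixed power $h^{-m}$; hence $A_{h,\eps}$ is an $h$-FIO with the stated canonical relation. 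For the second assertion, $Af-A_{h,\eps}f=R_{h,\eps}f=A\psi(hD/\eps)f$; the symbol $\psi(\xi/\eps)$ of $\psi(hD/\eps)$ is supported in $\{|\xi|\le2\eps\}$, so when $\WFH(f)$ is disjoint from $\R^n\times B(0,\eps)$ (enlarging $\eps$ by the fixed factor $2$ if one wants the whole support of $\psi(\cdot/\eps)$ avoided, which is harmless for the statement), the \sc\ band limit of $f$ gives $\mathcal F_h f(\xi)=O(h^\infty\langle\xi\rangle^{-\infty})$ there, hence $\psi(hD/\eps)f=O(h^\infty)$ in $\mathcal S$; applying the $h$-independent operator $A$, bounded between the relevant Sobolev spaces, yields $R_{h,\eps}f=O(h^\infty)$.

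For part (b), set $u:=\psi(hD/\eps)f$, so $\mathcal F_h u(\zeta)=\psi(\zeta/\eps)\mathcal F_h f(\zeta)$ is supported in $\{|\zeta|\le2\eps\}$ with $|\mathcal F_h u|\le C h^{-N}$; since the convolution kernel of $\psi(hD/\eps)$ decays rapidly at scale $h/\eps$, $u$ coincides with a compactly supported tempered family modulo $O(h^\infty)$, and $R_{h,\eps}f=Au$ is then tempered. Rescaling $\eta=\zeta/h$ in \r{A00} and inserting $\int e^{-\i y\cdot\zeta/h}u(y)\,\d y=\mathcal F_h u(\zeta)$ gives $Au(x)=(2\pi h)^{-n}\int_{|\zeta|\le2\eps}e^{\i\phi(x,\zeta)/h}a(x,\zeta/h)\mathcal F_h u(\zeta)\,\d\zeta$, so for any $\phi_0\in C_0^\infty$,
\be{bproof-b}
\mathcal F_h(\phi_0 R_{h,\eps}f)(\xi)=(2\pi h)^{-n}\iint e^{\i(\phi(x,\zeta)-x\cdot\xi)/h}\,\phi_0(x)\,a(x,\zeta/h)\,\mathcal F_h u(\zeta)\,\d\zeta\,\d x,
\ee
with the $x$-integral over $\supp\phi_0$ and the $\zeta$-integral over $\{|\zeta|\le2\eps\}$, both compact. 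The $x$-gradient of the phase is $\phi_x(x,\zeta)-\xi$, and since $\phi_x$ is homogeneous of degree one in $\zeta$ we have $|\phi_x(x,\zeta)|\le C_0|\zeta|\le2C_0\eps$ on the integration set, where $C_0:=\sup\{|\phi_x(x,\omega)|:x\in\supp\phi_0,\ |\omega|=1\}$. Hence for $|\xi|\ge2C_0\eps+\delta$ this gradient has length $\ge\delta$ uniformly, and repeated integration by parts in $x$ gains a factor $h/|\phi_x(x,\zeta)-\xi|$ at each step, while differentiating $a(x,\zeta/h)$ in $x$ (the integrand vanishing anyway for $|\zeta|\le h$) brings down no power of $h$ beyond the fixed order-$m$ bound on the compact $\zeta$-set; together with the polynomial bound on $\mathcal F_h u$, this makes \r{bproof-b} equal to $O(h^\infty)$ for $|\xi|\ge2C_0\eps+\delta$, and for $|\xi|$ large $|\phi_x-\xi|\ge|\xi|/2$ gives the extra $\langle\xi\rangle^{-\infty}$, so $\WFH(R_{h,\eps}f)$ has no points, finite or infinite, with $|\xi|\ge2C_0\eps+\delta$. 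Taking $C$ slightly larger than $2C_0$, with $C_0$ uniform over the compact set on which $A$ is considered, yields $\WFH(R_{h,\eps}f)\subset\R^n\times B(0,C\eps)$.

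The step I expect to be the main obstacle is the bookkeeping in this nonstationary-phase estimate: one must see that the negative powers of $h$ entering through the rescaled amplitude $a(x,\zeta/h)$, through the hypothesis $|\mathcal F_h f|\le C h^{-N}$, and through the iterated symbol estimates are all bounded by a single fixed power independent of the number of integrations by parts, so that the $h^\infty$ gain from $|\phi_x(x,\zeta)-\xi|\ge\delta$ genuinely wins; one must also handle that $u$ is not a priori compactly supported, which is why $A$ is taken properly supported and the argument kept microlocal, and check that the constant $C_0$, hence $C$, is uniform.
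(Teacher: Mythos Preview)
Your proposal is correct and follows essentially the same approach as the paper's own proof: both verify that $a(x,\eta/h)(1-\psi(\eta/\eps))$ satisfies \sc\ symbol estimates on $|\eta|\ge\eps$, deduce the second claim in (a) from $R_{h,\eps}f=A\psi(hD/\eps)f$ and the wave-front hypothesis, and prove (b) by localizing with a cutoff $\phi_0$ (the paper calls it $\rho$), writing $\mathcal F_h(\phi_0 R_{h,\eps}f)$ as the same oscillatory integral you display, and applying nonstationary phase in $x$ using $|\phi_x(x,\zeta)|\le C_0|\zeta|\le 2C_0\eps$ from homogeneity. The paper's proof is a terse sketch of exactly these steps; your version simply fills in the details (the phase/critical-set computation, the explicit symbol bounds, the bookkeeping of $h$-powers in the integration by parts, and the factor-of-two remark for part~(a)).
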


\begin{proof}
It follows from \r{hpdo1} that if $a$ is a classical symbol of order $m$, then $a(x,\eta/h)$ is a \sc\ one of order $(m,-m)$ for $|\eta|/h>1$.  Therefore our claim (a) is true for $|\eta|>\eps/2$ and $0<h<\eps$, which is true on the support of the symbol $ a(x,\eta/h)(1-\psi(\eta/\eps)) $ of $A_{h,\eps}$. Hence $\tilde a$ is a \sc\ symbol of order $(-m,m)$. The second part of (a) is immediate. 

To prove (b), multiply $R_{h,\eps}f$ by $\rho\in C_0^\infty$ and apply $\mathcal{F}_h$:
\[
\mathcal{F}_h \rho R_{h,\eps}f(\xi) = (2\pi h)^{-n} \iint e^{\i(\phi(x,\eta)-x\cdot\xi)/h}\rho(x) a(x,\eta/h)\psi(\eta/\eps) \mathcal{F}_h f(\eta)\,\d\eta\,\d x.
\] 
For the phase $\Phi:=\phi(x,\eta)-x\cdot\xi$ we have $\Phi_x= \phi_x(x,\eta)-\xi$. By the homogeneity of $\phi$, for $|\eta|\le 2\eps$, and $|\xi|>C\eps$, we have $\Phi_x\not=0$. Then a stationary phase argument implies $\mathcal{F}_h \rho R_{h,\eps}f(\xi)=O(h^\infty)$ for such $\xi$. This proves (b). 
\end{proof}

\subsection{Semiclassical defect measures} \label{sec_measures}
Given $f_h$ with $\|f_h\|\le C$, one can show that there exists a sequence $h_j\to0$ so that the limit 
\be{9}
 \lim_{h=h_j\to 0+}\left( p(x,hD)f_h,f_h \right)_{L^2} = \int  p(x,\xi)\, \d\mu_f(x,\xi) 
\ee 
exists for every symbol $p\in C_0^\infty$, see \cite{Martinez_book, Zworski_book}, and defines a Borel measure $\d\mu_f(x,\xi)\ge0$ called 
a \sc\ defect measure associated to $f$. That measure may not be unique. Note that $\d\mu_f$ is invariantly defined on $T^*\R^n$. On the other hand, its definition \r{9} depends on the choice of the measure (respectively the coordinates) used to define the $L^2$ space there. We can use every quantization of $p$ in \r{9}, for example the Weyl one $p^{\rm w}(x,hD)$ which guarantees that \r{9} is real when $p$ is real-valued. 

When $f$ is \sc ly band limited,  $\WFH(f)$ is compact, hence $\d\mu_f$ has compact support as well, and 
\be{9a}
\|f_{h_j}\|_{L^2}^2 = \int \d\mu_f +o(1). 
\ee
This in particular implies that our assumption guarantees that $\|f_{h_j}\|_{L^2}$ is asymptotically constant as $h_j\to0$. In fact, some authors require $\|f_h\|=1$, see \cite{Martinez_book}.

\section{Sampling in the \sc\ limit}\label{sec_sampling}
\subsection{Sampling \sc ly band limited functions} We recall some results in \cite{S-Sampling} first. 
The classical Nyquist--Shannon sampling theorem says that a function $f\in L^2(\R^n)$ with a Fourier transform $\hat f$ supported in the box $[-\Xi, \Xi]^n$ can be uniquely and stably recovered from its samples $f(sk)$, $k\in\mathbf{Z}^n$ as long as $0<s\le \pi/\Xi$. More precisely, we have
\be{1a}
f(x) = \sum_{k\in\mathbf{Z}^n} f(sk)\chi_k (x), \quad \chi_k(x):= \prod_{j=1}^n\sinc \Big(\frac{1}{s}(x_j-sk_j)\Big), 
\ee
where we adopt the ``engineering'' definition of the sinc function
\[
\sinc(x)=\sin(\pi x)/\pi x.
\]
Moreover,
\[
\|f\|^2 = s^n\sum_{k\in\mathbf{Z}^n} |f(sk)|^2,
\]
where $\|\cdot\|$ is the $L^2$ norm, see, e.g., \cite{Natterer-book} or \cite{Epstein-book}. 

The proof is based on viewing the samples $f(sk)$ as the (inverse) Fourier coefficients of $\hat f$, extended as $2\pi/s$-periodic function. We reproduce the proof below in the \sc\ case. 

In \cite{S-Sampling}, we formulated this, and related results in the \sc\ setting. One of those theorems is the following. Recall that $\Sigma_h(f)$ is defined in Definition~\ref{def_S}.

\begin{theorem}\label{thm_sc}
Let $f_h$ be \sc ly band limited with $\Sigma_h(f) \subset  \prod(-B_j,\B_j)$ with some $B_j>0$. Let $\hat \chi_j \in L^\infty(\R)$ be supported in $[-\pi,\pi]$, and $\hat \chi_j(\pi\xi_j/B_j)=1$ for $\xi\in \Sigma_h(f) $. If $0<s_j \le \pi/B_j$, then 
\be{2.3}
f_h(x) = \sum_{k\in \mathbf{Z}^n} f_h(s_1 hk_1,\dots, s_nhk_n) \prod_j \chi_j\left(\frac{1}{s_j h}(x_j-s_jhk_j)\right) + O_{\mathcal{S}}(h^\infty) \|f\| ,
\ee
and 
\be{2.4}
\|f_h\|^2 = s_1\dots s_n h^n\sum_{k\in \mathbf{Z}^n} |f_h(s_1hk_1,\dots, s_nhk_n)|^2 + O(h^\infty)\|f\|^2.
\ee
\end{theorem}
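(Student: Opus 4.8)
The plan is to carry out, in the semiclassical Fourier transform $\mathcal F_h$, the Fourier-series proof of the classical Nyquist--Shannon theorem alluded to above, keeping track of an $O_{\mathcal S}(h^\infty)\|f\|$ remainder at every step. Write $x_k:=(s_1hk_1,\dots,s_nhk_n)$ and $\langle sk,\xi\rangle:=\sum_j s_jk_j\xi_j$, and let $Q:=\prod_j(-\pi/s_j,\pi/s_j)$ be the ``fundamental domain'', on which the characters $e^{-\i\langle sk,\xi\rangle}$, $k\in\mathbf Z^n$, are orthogonal with periods $2\pi/s_j$. Since $s_j\le\pi/B_j$ we have $\prod_j(-B_j,B_j)\subseteq Q$, and since $\Sigma_h(f)$ is compact and contained in the \emph{open} box $\prod_j(-B_j,B_j)$ it sits at a positive distance from $\partial Q$. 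The multiplier in play is $\hat\chi(\xi):=\prod_j\hat\chi_j(s_j\xi_j)$: it is bounded, $\supp\hat\chi\subseteq\overline Q$, and the hypothesis on $\hat\chi_j$ together with $s_j\le\pi/B_j$ and the compact containment guarantees $\hat\chi\equiv1$ on a neighborhood of $\Sigma_h(f)$.

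First I would record the semiclassical Poisson summation identity: since $f_h\in C_0^\infty$, only finitely many samples are nonzero and $\mathcal F_hf_h$ is Schwartz, and inserting $f_h(x_k)=(2\pi h)^{-n}\int e^{\i x_k\cdot\xi/h}\mathcal F_hf_h(\xi)\,\d\xi$ into $\phi(\xi):=\sum_k f_h(x_k)e^{-\i\langle sk,\xi\rangle}$ gives $\phi(\xi)=(s_1\cdots s_nh^n)^{-1}\sum_{l\in\mathbf Z^n}\mathcal F_hf_h\big(\xi_1+2\pi l_1/s_1,\dots,\xi_n+2\pi l_n/s_n\big)$. The Nyquist hypothesis is used exactly here: for $\xi\in Q$ and $l\ne0$ the translated point has some coordinate of modulus $\ge\pi/s_j\ge B_j$, hence lies off a fixed neighborhood of $\Sigma_h(f)$, so by Definition~\ref{def_B} every aliasing term, together with all its $\xi$-derivatives, is $O(h^\infty\langle\xi\rangle^{-\infty})$. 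Consequently, on $Q$ one has $\mathcal F_hf_h(\xi)=(s_1\cdots s_nh^n)\phi(\xi)+r_h(\xi)$ with $r_h$ smooth on $Q$ and $r_h,\partial_\xi^\alpha r_h=O(h^\infty\langle\xi\rangle^{-\infty})$.

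Next I would recognise the interpolation sum as a Fourier multiplier applied to $\phi$: a one-variable computation gives $\chi_j\big(\tfrac1{s_jh}(x_j-s_jhk_j)\big)=(s_jh)(2\pi h)^{-1}\int e^{\i x_j\xi_j/h}\hat\chi_j(s_j\xi_j)e^{-\i s_jk_j\xi_j}\,\d\xi_j$, hence $\sum_k f_h(x_k)\prod_j\chi_j(\cdots)=(s_1\cdots s_nh^n)(2\pi h)^{-n}\int_{\overline Q} e^{\i x\cdot\xi/h}\hat\chi(\xi)\phi(\xi)\,\d\xi$. Subtracting this from $f_h(x)=(2\pi h)^{-n}\int e^{\i x\cdot\xi/h}\mathcal F_hf_h(\xi)\,\d\xi$ and using the previous paragraph together with $\supp\hat\chi\subseteq\overline Q$, the difference becomes $(2\pi h)^{-n}\int e^{\i x\cdot\xi/h}(1-\hat\chi(\xi))\mathcal F_hf_h(\xi)\,\d\xi$ plus a term of the form $(2\pi h)^{-n}\int_{\overline Q}e^{\i x\cdot\xi/h}\hat\chi(\xi)r_h(\xi)\,\d\xi$. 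In the first integral $1-\hat\chi$ vanishes near $\Sigma_h(f)$ while $\mathcal F_hf_h$ is rapidly decreasing off it, and in the second the amplitude is $\hat\chi r_h$; so in both cases the amplitude is $O(h^\infty\langle\xi\rangle^{-\infty})$ in sup norm, giving an $O(h^\infty)$ bound after the $(2\pi h)^{-n}$ loss. For the full $O_{\mathcal S}(h^\infty)$ estimate one integrates by parts in $\xi$; the amplitudes and all their $\xi$-derivatives are $O(h^\infty\langle\xi\rangle^{-\infty})$ away from the fixed jump set $\partial Q$ (the $\hat\chi_j$ need only be $L^\infty$, e.g.\ indicators giving $\sinc$), so the finitely many boundary contributions are again $O(h^\infty)$, and the overall $\|f\|$ is carried by the semiclassical Plancherel identity $\|\mathcal F_hf_h\|_{L^2}=(2\pi h)^{n/2}\|f_h\|$, which controls the constants in Definition~\ref{def_B}. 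This gives \r{2.3}. For \r{2.4} I would write $\|f_h\|^2=(2\pi h)^{-n}\int_Q|\mathcal F_hf_h|^2+O(h^\infty)\|f\|^2$, substitute $\mathcal F_hf_h=(s_1\cdots s_nh^n)\phi+r_h$ on $Q$, and invoke Parseval for $\phi$ on $Q$, $\int_Q|\phi|^2=\Vol(Q)\sum_k|f_h(x_k)|^2$ with $\Vol(Q)=(2\pi)^n/(s_1\cdots s_n)$; the $|r_h|^2$ and cross terms are absorbed using the a priori bound $\sum_k|f_h(x_k)|^2\lesssim h^{-n}\|f\|^2$ that the leading term itself provides.

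The main obstacle is the error bookkeeping in the Schwartz topology rather than any algebra: because the interpolation kernels $\chi_j$ are only $L^\infty$-Fourier transforms of functions supported in $[-\pi,\pi]$ --- in the extremal $\sinc$ case decaying merely like $|x|^{-1}$, so $\sum_k f_h(x_k)\chi_k$ is only a finite, not an absolutely well-behaved, sum --- one cannot estimate the remainder by summing over $k$ and must argue entirely on the Fourier side, where $(1-\hat\chi)\mathcal F_hf_h$ and $\hat\chi r_h$ are genuinely $O(h^\infty\langle\xi\rangle^{-\infty})$ and the only defect is jumps along the fixed hyperplanes $\xi_j=\pm\pi/s_j$. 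A secondary point, handled by the slack $\Sigma_h(f)\Subset\prod_j(-B_j,B_j)$, is the borderline rate $s_j=\pi/B_j$: there $Q$ only barely contains the band, so one really needs $\mathcal F_hf_h$ to be rapidly decreasing on a full neighborhood of the closed band, not merely off the open box.
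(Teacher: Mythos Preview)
Your proof is correct and follows essentially the same route as the paper's sketch: write the periodic extension of $\mathcal F_hf_h$ as a Fourier series in the samples (your Poisson summation identity is exactly equation~\r{S1}), observe that the aliasing translates are $O(h^\infty)$ by semiclassical band-limitedness, multiply by $\hat\chi$, and take $\mathcal F_h^{-1}$; the Parseval argument for \r{2.4} is likewise standard. You are more explicit than the paper (which defers details to \cite{S-Sampling}) about the Schwartz-seminorm bookkeeping and the handling of possible jumps of $\hat\chi_j$ on $\partial Q$, but the underlying argument is the same.
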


One could think of $\chi_j$ as somewhat better versions of the sinc function: they decay faster if we choose $\hat\chi_j$ to be smooth. We can do this because $\Sigma_h(f) $ (which is compact) is assumed to be included in the interior of  the closed  $\prod[-B_j,\B_j]$. 
In case of an equality, we must take $\chi_j(x)=\sinc(x)$.

Assume now for simplicity that all $B_j$ and $s_j$ are equal to some $B$ and $s$, respectively. 
We can always choose a linear transformation $y=Wx$ to get back to \r{2.3} or even more general sampling grids, and the dual one $\xi=W^*\eta$ for the dual variables. 
Set $\chi(x)=\chi_1(x_1)\cdots\chi_n(x_n)$. Then \r{2.3} and \r{2.4} take the form
\be{2.5x}
f_h(x)= \sum_{k\in\mathbf{Z}^n} f_h(shk)\chi_k(x) + O_{\mathcal{S}}(h^\infty) \|f\|, \quad \chi_k(x) :=  \chi\left(\frac{1}{s h}(x-shk)\right)
\ee
and
\be{2.5xx}
\|f_h\|^2 = (sh)^n\sum_{k\in \mathbf{Z}^n} |f_h(shk)|^2 + O(h^\infty)\|f\|^2.
\ee

The proof of Theorem~\ref{thm_sc} is based on the following observation. Since $\mathcal{F}_hf$ is supported in 
$\prod(-B_j,\B_j)$  up to an $O(h^\infty)$ error,  and $\pi/s >B_j$, we have
\be{S1}
(\mathcal{F}_h f)_\text{ext}(\xi) =   (sh)^n \sum_k f(shk)e^{-\i s x \cdot \xi} +  O_{\mathcal{S}}(h^\infty), 
\ee
where $(\mathcal{F}_h f)_\text{ext}(\xi) $ is the periodic extension of $\mathcal{F}_h f(\xi) $ with period $2\pi/s$ in each 1D variable. 
Multiply this by $ \hat \chi(s\xi)$ to get 
\[
\mathcal{F}_h f(\xi) = (sh)^n \hat \chi(s\xi)  \sum_k f(shk)e^{-\i s\xi\cdot k} +  O_{\mathcal{S}}(h^\infty).
\]
If $\chi_k$ is the interpolating function in \r{2.5x}, then
\be{2.6}
\mathcal{F}_h\chi_k(\xi)= (sh)^n \hat \chi(s\xi) e^{-\i s\xi\cdot k}.
\ee
Take $\mathcal{F}_h^{-1}$ to complete the proof. The full details can be found in \cite{S-Sampling}. Also, $\chi$ does not need to be of product type, as shown there. 

\begin{remark}\label{rem_1}
In the limit case $\Sigma_h(f) =  (-B,B)^n$, which is not allowed by the theorem since $\Sigma_h(f) $ is compact, we have $\hat\chi=\mathbf{1}_{[-\pi, \pi]^n}$, where $\mathbf{1}_{[-\pi, \pi]^n}$ stands for the characteristic function of $[-\pi, \pi]^n$. Then $\chi$ is a product of sinc functions, see \r{1a}. We will use the notation 
\be{sinc_k}
\sinc_k(x):= \prod_{j=1}^n\sinc \Big(\frac{1}{sh}(x_j-shk_j)\Big).
\ee
Then \r{2.6} takes the form 
	\be{2.8}
\mathcal{F}_h\sinc_k(\xi) =  (sh)^n  \mathbf{1}_{[-\pi/s, \pi/s]^n} (\xi)  e^{-\i s k\cdot \xi}.
	\ee
The functions $\sinc_k$ 
form an orthogonal system, and  
\be{phi_k}
\phi_k:= (sh)^{-n/2}\sinc_k
\ee
 is an orthonormal basis in the subspace  $\textbf{1}_{[-\pi/s,\pi/s]^n}(hD)L^2(\R^n)$. For future reference, we want to mention that for every $m\ge2$ integer, if $\sinc_k^{(m)}(\xi)$ is defined by 
\be{2.8x}
\mathcal{F}_h\sinc_k^{(m)}(\xi) =  (sh/m)^n  \mathbf{1}_{[-m\pi/s, m\pi/s]^n} (\xi)  e^{-\i s k\cdot \xi}
\ee
then 
\be{2.8a}
\sinc^{(m)} _k(x)=  \prod_j\sinc\left(\frac{m}{s h}(x_j-shk_j)\right)
\ee
instead.  Then 
\[
\phi_k^{(m)} := (sh/m)^{-n/2}\sinc_k^{(m)}
\]
  is an orthonormal system in $\textbf{1}_{[-m\pi/s,m\pi/s]^n}(hD)L^2(\R^n)$ but not a basis. To make it a basis, notice that $s$ was replaced by $s/m$ and $k$ was replaced by $mk$ there. Allowing the original $k$ to run over all integer points, i.e., replacing $k$ by $k/m$ there would complete \r{2.8a} to a basis. 
\end{remark}

\subsection{Constructing a \sc ly band limited function from a discrete sequence} 
The next question is how to associate a \sc ly band limited function to a set of numbers $\bm f_k$, $k\in\mathbf{Z}^n$, which we view as its samples. Without the band limited requirement, this can be done in infinitely many ways, of course, by various ways to interpolate between the samples. On the other hand, if we fix the band limit $B$, then for $B<\pi/s$, such a function, if exists, would be oversampled, and those samples can be shown to be dependent. We can only hope that this problem always has a solution when $B\ge\pi/s$. The next proposition, proven in \cite{S-Sampling}, shows that it can be done when $B=\pi/s$ with a sinc interpolation. 

\begin{proposition}\label{pr_f} 
Let $\Omega\Subset \Omega_1$ be both open.  
Fix $s>0$. 
For $0<h\ll1$, let $K(h)\subset \mathbf{Z}^n$ be the set of those $k$ for which $shk\in \Omega$. Then for every collection of complex numbers $\{\bm f_{k,h}\}$, $k\in K(h)$ with $\sum_k |\bm f_{k,h}|^2$ tempered, there exists a \sc ly band limited $f_h$ with $\WFH(f)\subset \Omega_1\times [-\pi/s,\pi/s]^n$ so that $f(shk) = \bm f_{k,h}$.

One such choice  is given by
\be{2.3''}
\tilde f_h=\psi f_h, \quad \text{with\ \ } 
f_h(x) =  \sum_{k\in K(h)} \bm f_{k,h} \sinc_k(x),  
\ee
where  $\psi\in C_0^\infty(\Omega_1)$  is equal to $1$ near $\bar\Omega$. 
Moreover, \r{2.5xx} holds. 
\end{proposition}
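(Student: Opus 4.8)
The plan is to verify the three clauses of Definition~\ref{def_B} for $\tilde f_h=\psi f_h$, together with the interpolation identity and \r{2.5xx}; the one delicate point is that the \sc\ frequency set of $\tilde f_h$ sits on the \emph{closed} cube $[-\pi/s,\pi/s]^n$, which is the borderline case of Theorem~\ref{thm_sc}. First I would record the frequency content of the pure sinc interpolant $f_h=\sum_{k\in K(h)}\bm f_{k,h}\sinc_k$ from \r{2.3''}. Since $\bar\Omega$ is compact, $K(h)$ is finite, so $f_h$ is a finite sum and in particular $C^\infty$; applying $\mathcal{F}_h$ and \r{2.8} gives
\[
\mathcal{F}_hf_h(\xi)=(sh)^n\,\mathbf{1}_{[-\pi/s,\pi/s]^n}(\xi)\sum_{k\in K(h)}\bm f_{k,h}\,e^{-\i sk\cdot\xi},
\]
so $f_h$ lies in the \sc\ Paley--Wiener space $\mathbf{1}_{[-\pi/s,\pi/s]^n}(hD)L^2(\R^n)$. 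As $\{\phi_k\}$ from \r{phi_k} is orthonormal there and $f_h=(sh)^{n/2}\sum_k\bm f_{k,h}\phi_k$, we get $\|f_h\|^2=(sh)^n\sum_{k\in K(h)}|\bm f_{k,h}|^2$, which is tempered by hypothesis, and by Cauchy--Schwarz together with $\#K(h)=O(h^{-n})$ the quantity $|\mathcal{F}_hf_h(\xi)|$ is bounded, uniformly in $\xi$, by a fixed power of $h^{-1}$. Finally $\sinc_j(shk)=\prod_l\sinc(k_l-j_l)=\delta_{jk}$ for all integer $j,k$ (the engineering $\sinc$ vanishes at nonzero integers), so $f_h(shk)=\bm f_{k,h}$ for $k\in K(h)$ and $f_h(shk)=0$ otherwise; hence the exact Shannon identity $\|f_h\|^2=(sh)^n\sum_{k\in\mathbf{Z}^n}|f_h(shk)|^2$ holds for $f_h$.

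Next I would pass to $\tilde f_h=\psi f_h$. Its support lies in the $h$-independent compact set $\supp\psi\subset\Omega_1$, giving Definition~\ref{def_B}(i), and, since a cutoff $\phi$ with $\phi(x_0)\ne0$ supported off $\supp\psi$ annihilates $\tilde f_h$, it forces the base projection of $\WFH(\tilde f_h)$ into $\Omega_1$. Temperedness is inherited from $f_h$ because multiplication by $\psi\in C_0^\infty$ is bounded on every $H^s_h$ uniformly in $h$ (it is a zeroth-order \HPDO), giving (ii). For (iii) I would write the \sc\ Fourier transform of the product as the rescaled convolution
\[
\mathcal{F}_h\tilde f_h(\xi)=(2\pi h)^{-n}\int_{[-\pi/s,\pi/s]^n}\hat\psi\big((\xi-\eta)/h\big)\,\mathcal{F}_hf_h(\eta)\,\d\eta ,
\]
with $\hat\psi$ the ordinary (Schwartz) Fourier transform of $\psi$. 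If $\dist(\xi,[-\pi/s,\pi/s]^n)\ge\delta>0$ then $|\xi-\eta|\ge\delta$ throughout the integral, so $|\hat\psi((\xi-\eta)/h)|\le C_N h^{N}\langle\xi\rangle^{-N}$ for every $N$; together with the uniform bound on $|\mathcal{F}_hf_h|$ this yields $\mathcal{F}_h\tilde f_h(\xi)=O(h^\infty\langle\xi\rangle^{-\infty})$ for such $\xi$ (and the same bound holds for $\mathcal{F}_h(\phi\tilde f_h)$ with any $\phi\in C_0^\infty$, since $\phi\psi\in C_0^\infty$), i.e.\ (iii) with $\mathcal{B}=[-\pi/s,\pi/s]^n$. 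By the characterization of \sc\ band limited functions recalled after Definition~\ref{def_B}, $\WFH(\tilde f_h)$ is then compact with no infinite points and contained in $\Omega_1\times[-\pi/s,\pi/s]^n$. Moreover $\psi\equiv1$ near $\bar\Omega$ and $shk\in\Omega$ for $k\in K(h)$, so $\tilde f_h(shk)=f_h(shk)$ for \emph{every} $k\in\mathbf{Z}^n$ (equal to $\bm f_{k,h}$ on $K(h)$ and to $0$ off it); thus $\tilde f_h$ interpolates the prescribed values and has the same samples as $f_h$.

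I expect \r{2.5xx} for $\tilde f_h$ to be the main obstacle. Because $\Sigma_h(\tilde f_h)$ only lies in the \emph{closed} cube and $\mathcal{F}_h\tilde f_h$ carries non-negligible mass up to its boundary, Theorem~\ref{thm_sc} does not apply off the shelf, and one must rerun its proof in the limiting (sinc) form of Remark~\ref{rem_1}: periodize $\mathcal{F}_h\tilde f_h$ with period $2\pi/s$ and multiply by $\mathbf{1}_{[-\pi/s,\pi/s]^n}$. The periodic translates overlap only in the measure-zero set of shared faces, and by the previous paragraph the mass of $\mathcal{F}_h\tilde f_h$ beyond any fixed distance from the cube is $O(h^\infty)$; what remains is to control the near-boundary error produced by the $\psi$-truncation, where one uses precisely that $\psi$ deletes only samples that already vanish, so the right-hand side of \r{2.5xx} is unchanged when $f_h$ is replaced by $\tilde f_h$. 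Assembling these pieces gives $\|\tilde f_h\|^2=(sh)^n\sum_{k\in\mathbf{Z}^n}|\tilde f_h(shk)|^2+O(h^\infty)\|\tilde f\|^2$, which is \r{2.5xx} for $\tilde f_h$, and completes the proof.
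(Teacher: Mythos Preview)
Your argument follows the paper's route exactly: compute $\mathcal{F}_hf_h$ via \r{2.8}, observe the support is $[-\pi/s,\pi/s]^n$, and then multiply by $\psi$ to localize in $x$. The paper's own proof is two lines and asserts only that $\tilde f_h$ ``has the required properties''; your verification of the three clauses of Definition~\ref{def_B}, the convolution estimate for $\mathcal{F}_h(\psi f_h)$ away from the cube, and the interpolation identity $\sinc_j(shk)=\delta_{jk}$ are all correct and fill in details the paper omits.

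On \r{2.5xx}: you are over-reading the statement. In the display \r{2.3''} the symbol $f_h$ denotes the \emph{sinc sum}, not the truncated $\tilde f_h$, and the paper's ``Moreover, \r{2.5xx} holds'' refers to that $f_h$. For the sinc sum the identity is exact (zero error), by the orthogonality $\|f_h\|^2=(sh)^n\sum_k|\bm f_{k,h}|^2$ you already established in your first paragraph. So your last paragraph is attacking a stronger claim than the proposition makes. That said, the argument you sketch there does not close: you correctly note that the right-hand side of \r{2.5xx} is unchanged in passing from $f_h$ to $\tilde f_h$, but to conclude you would also need $\|\tilde f_h\|^2-\|f_h\|^2=O(h^\infty)\|f_h\|^2$, i.e.\ that the $L^2$ mass of $f_h$ outside $\{\psi=1\}$ is $O(h^\infty)$. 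Since $\sinc_k$ decays only like $|x-shk|^{-1}$ in each coordinate, this is \emph{not} $O(h^\infty)$ in general (the tail of a single $\sinc_k$ beyond a fixed distance carries $L^2$ mass of order $h^{n}$, not $h^\infty$). So either drop that paragraph, or state the weaker conclusion you can actually get from the sinc tails.
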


\begin{proof}    
With $f_h$ as above, we have $f_h(shk) = \bm f_h(k)$ and 
\be{2.d.2}
\mathcal{F}_hf_h(\xi) = \textbf{1}_{[-\pi/s,\pi/s]^n}(\xi)(sh)^n \sum_{k\in K(h)} \bm f_{k,h} e^{-\i sk\cdot\xi},
\ee
compare to \r{S1} and \r{2.8}. 
 Then $\WFH(f_h)\subset \R^n\times [-\pi/s,\pi/s]^n$. 
Let $\psi$ be as in the theorem. Then $\tilde f_h=\psi f_h$ has the required properties. 
\end{proof}

By Theorem~\ref{thm_sc}, \r{2.3''} is the only such representation when $f_h$ is restricted to $\Omega$, up to an $O(h^\infty)$ error, if we want to keep the band limit $B$ to be the sharp one $B=\pi/s$.  

The expansion \r{2.3''} has the usual downsides associated with the presence of the sinc functions there --- they decay too slowly at infinity allowing the influence of each term to extend too far. When $B>\pi/s$ (strictly), we can have the localized interpolation functions $\chi_k$ of Theorem~\ref{thm_sc} in principle. The situation is different than  that in Theorem~\ref{thm_sc} though. The functions $\chi_j$ in \r{2.3} do not necessarily satisfy $\chi_{k_1}(shk_2)= \delta_{k_1,k_2}$, where $\delta_{k_1,k_2}$ stands for the Kronecker symbol.  In the case under consideration, they have to (up to an $O(h^\infty)$ error). Also, when $B>\pi/s$, the corresponding function $f_h$ would be undersampled rather than oversampled. Next, in interpolations like these, the desire is to make it as smooth as possible. 

One way to enforce $f_h(shk) = \bm f_{k,h}$  is to replace the sinc function in \r{2.8a} with itself, multiplied by some $\phi\in \mathcal{S}$ with $\phi(0)=1$,  $\hat\phi\in C_0^\infty$, i.e., to put a product of $\sinc(x)\phi(x)$ for each point $x_j$. Then $\mathcal{F}(\phi\sinc)=(2\pi)^{-1} \mathbf{1}_{[-\pi,\pi]}*\hat\phi $ has support larger than $[-\pi,\pi]$ which corresponds to a band limit greater than $\pi/s$, compare with \r{2.d.2}. One can also have a rapidly decreasing $\hat\phi$ instead of a $C_0^\infty$ one, and the resulting error by replacing it with a suitable  a $C_0^\infty$ one can be estimated easily. 

\subsection{ Lanczos-3 interpolation and other convolution based interpolations} \label{sec_Lan3}
One practical and approximate realization of the idea above is the Lanczos-3 interpolation. 
It is part of the family of the Lanczos-k interpolations with the number $3$ below replaced by an integer $k$. In it, the functions $\chi_j$ in \r{2.3}  are taken to be
\[
\text{Lan3}(x) := H(3-|x|)\sinc(x) \sinc(x/3),
\]
where $H$ is the Heaviside function, and $x$ stands for each coordinate function $x_i$.  Its Fourier transform is not of compact support but decays like $O(|\xi|^{-2})$ with a small leading term; and it is very small outside $|\xi|\le 2\pi$,  see Figure~\ref{fig_3}; as opposed to $\sinc(x)$ which Fourier transform is supported on $|\xi|\le\pi$. The kernel $\text{Lan3}(x)$ is easy to compute numerically, has a small support, and preserves the property $f(shk)=\bm f_{k,h}$ because $\text{Lan3}(k) = \delta_{k,0}$ for $k$ an  integer.  So for all practical purposes, choosing $\chi$ in \r{2.3''} to be $\text{Lan3}$, provides an interpolation with a band limit no greater than $B=2\pi/s$ and even $B=1.5\pi/s$; $1.5$ to $2$ times that of \r{2.3''}, see Figure~\ref{fig_3}. 

\begin{figure}[ht]
\begin{center}
	\includegraphics[scale=0.3]{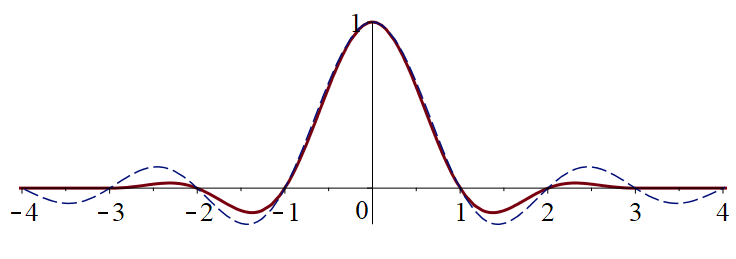}
	\includegraphics[scale=0.3]{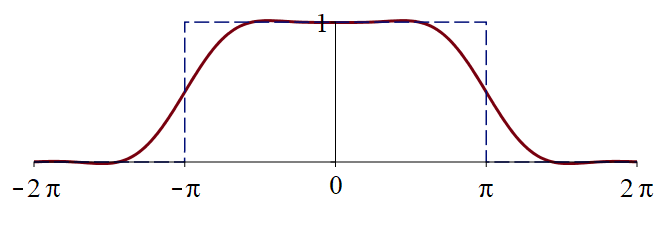}
\end{center}
\caption{\small The Lanczos-3 kernel $\text{Lan3}$ and its Fourier transform.  The  sinc kernel and its Fourier transform are shown as dashed lines. 
}  
\label{fig_3}
\end{figure}
If the samples in \r{2.4L} with $\chi$ a Lanczos-3 kernel, are those of a function with a band limit $B=\pi/s$ (the Nyquist limit for that step size), then the reconstruction will leave frequencies below $B/2$ mostly unchanged, and will attenuate and alias those between $B/2$ and $B$ as in Figure~\ref{fig_3}. 
The resulting aliasing will be ``small'' because the amplitude is ``small'' away from $|\xi|\le B$ (in Figure~\ref{fig_3}, $B=\pi$). 
The created $f_h$ will have an essential  band limit larger than $B$, as explained above, and this is true even if the samples are arbitrary.

The property of the Lanczos-3 kernel to be almost $1$ in $[-\pi/2,\pi/2]$ can be used to practical interpolations with an explicit kernel with small support approximating well enough $\chi_j$ in~\r{2.3}. For this, it is enough to oversample  twice or even $1.5$ times only in each coordinate and use the Lanczos-3 kernel.  
We use this technique in the numerical computations later.  This way, we work with a very well localized kernel rather than with the sinc one.

The Lanczos-3 interpolation belongs to the family of the convolution based interpolations of the type 
\be{2.4L}
f_h(x) =  \sum_{k\in K(h)} \bm f_{k,h} \chi_k(x), \quad \chi_k(x):=  \chi\Big(\frac{1}{sh}(x-shk)\Big)  ,
\ee
with various compactly supported kernels $\chi$. It is easy to see that this is the case when the interpolation is translation invariant, has a finite domain of influence,  and is a linear operator. The simplest examples are the nearest neighbor ($\chi_k$ are characteristic functions of boxes in $\R^n$) and  the linear interpolation. 
Some of the higher order ones are the third order cubic Catmull-Rom spline and a fourth order cubic  spline proposed by Keyes, see \cite{Meijering2003, Keys1981}. Without going into detail, we will mention that those two are very similar to Lanczos-2 and Lanczos-3, respectively with the Keyes one being a bit more smoothing that Lanczos-3.
The Fourier transforms $\hat\chi$  related to the cubic interpolations and the Lanczos-3 ones decay fast enough to be well approximated with compactly supported ones. Then we have the following.

\begin{proposition}\label{pr_P}
Let $\hat\chi\in L^\infty_\text{\rm comp}$. Then for $f_h$ given by \r{2.4L} we have
\[
\|f_h\|^2\le C (sh)^n \sum_{k\in K(h)} |\bm f_{k,h}|^2, \quad C:=\|\hat\chi\|^2_{L^\infty}. 
\]
\end{proposition}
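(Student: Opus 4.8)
The plan is to estimate the $L^2$ norm of $f_h$ in \r{2.4L} by passing to the semiclassical Fourier side, exactly in parallel with the computation leading to \r{2.d.2}. First I would compute $\mathcal{F}_h f_h$. Since $\chi_k(x) = \chi\big((x - shk)/(sh)\big)$, a change of variables gives $\mathcal{F}_h \chi_k(\xi) = (sh)^n \hat\chi(s\xi)\, e^{-\i s k\cdot\xi}$, where $\hat\chi$ denotes the ordinary (non-semiclassical) Fourier transform of $\chi$; this is the analogue of \r{2.6}. Summing over $k\in K(h)$ yields
\be{prP1}
\mathcal{F}_h f_h(\xi) = (sh)^n\, \hat\chi(s\xi) \sum_{k\in K(h)} \bm f_{k,h}\, e^{-\i s k\cdot\xi}.
\ee
Then by the semiclassical Parseval identity $\|f_h\|^2 = (2\pi h)^{-n}\int |\mathcal{F}_h f_h(\xi)|^2\,\d\xi$ I would substitute \r{prP1} and bound $|\hat\chi(s\xi)|^2 \le \|\hat\chi\|_{L^\infty}^2$ pointwise.

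The next step is to recognize that the factor $\sum_k \bm f_{k,h} e^{-\i sk\cdot\xi}$ is (up to normalization) the Fourier series of the sequence $\{\bm f_{k,h}\}$, so its $L^2$ norm over one period cube $[-\pi/s,\pi/s]^n$ is controlled by $\sum_k |\bm f_{k,h}|^2$ via the discrete Parseval/Bessel relation: $\int_{[-\pi/s,\pi/s]^n} \big|\sum_k \bm f_{k,h} e^{-\i sk\cdot\xi}\big|^2 \d\xi = (2\pi/s)^n \sum_{k} |\bm f_{k,h}|^2$. To exploit this I would use that $\hat\chi$ has compact support, so the integral $\int |\mathcal{F}_h f_h|^2\,\d\xi$ extends over a bounded region; after folding that region into periods of length $2\pi/s$, the bound $|\hat\chi(s\xi)|^2 \le \|\hat\chi\|_{L^\infty}^2$ combined with the periodicity of the exponential sum reduces everything to a single period integral. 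Collecting the constants $(sh)^{2n}\cdot(2\pi h)^{-n}\cdot(2\pi/s)^n = (sh)^n$ gives exactly $\|f_h\|^2 \le \|\hat\chi\|_{L^\infty}^2 (sh)^n \sum_{k\in K(h)} |\bm f_{k,h}|^2$.

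The one point requiring a little care — and the likely main obstacle — is the folding argument when $\supp\hat\chi$ is larger than one period cube (which is the interesting case, e.g. Lanczos-3 where it effectively spills beyond $[-\pi/s,\pi/s]^n$, or genuinely compactly supported kernels of larger bandwidth). Writing $\R^n = \bigsqcup_{m\in\mathbf{Z}^n}\big(\text{period cube} + (2\pi/s)m\big)$ and using that the exponential sum is $(2\pi/s)$-periodic in each variable, the integral over $\R^n$ becomes $\sum_m \int_{\text{one cube}} |\hat\chi(s\xi + 2\pi m)|^2 \big|\sum_k \bm f_{k,h} e^{-\i sk\cdot\xi}\big|^2\d\xi$. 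Here I would bound $\sum_m |\hat\chi(s\xi + 2\pi m)|^2 \le \|\hat\chi\|_{L^\infty}^2$ — but this is only valid if $\hat\chi$ has support contained in a single period cube, so that at most one term in the $m$-sum is nonzero. For the general compactly supported case as stated in the proposition, one should read $C = \|\hat\chi\|_{L^\infty}^2$ as the bound when $\supp\hat\chi$ fits in one period (i.e. band limit $\le \pi/s$, oversampled or Nyquist); otherwise the natural constant is $\big\|\sum_m |\hat\chi(\cdot + 2\pi m)|^2\big\|_{L^\infty}$, which equals $\|\hat\chi\|_{L^\infty}^2$ precisely when the translates do not overlap. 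I would therefore either restrict attention to that case or state the sharper constant; in the applications (twice or $1.5\times$ oversampling with Lanczos-3), the overlap contributes only the small tails of $\hat\chi$ and the estimate holds with $C$ close to $\|\hat\chi\|_{L^\infty}^2$ up to a controlled error.
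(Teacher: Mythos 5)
Your argument is correct, and your worry about the constant is well founded. The paper's proof takes a mildly different but ultimately equivalent route: it picks $m$ with $\supp\hat\chi\subset[-m\pi,m\pi]^n$, introduces the auxiliary function $g_h=\sum_k\bm f_{k,h}\sinc_k^{(m)}$, uses the orthogonality of the family $\{\sinc_k^{(m)}\}_{k\in\mathbf{Z}^n}$ (an orthogonal system, though not a basis, in the band $[-m\pi/s,m\pi/s]^n$) to get the exact identity $\|g_h\|^2=(sh/m)^n\sum_k|\bm f_{k,h}|^2$, and then writes $f_h=m^n\hat\chi(shD)g_h$ and bounds the Fourier multiplier by $\|\hat\chi\|_{L^\infty}$. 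Your periodization of $\int|\hat\chi(s\xi)|^2\bigl|\sum_k\bm f_{k,h}e^{-\i sk\cdot\xi}\bigr|^2\,\d\xi$ over the cells of the dual lattice is precisely the Fourier-side manifestation of that orthogonality, so the two proofs are the same Parseval computation in different clothing. What your version buys is transparency about the constant: the folding exhibits the sharp value $\bigl\|\sum_{j\in\mathbf{Z}^n}|\hat\chi(\cdot+2\pi j)|^2\bigr\|_{L^\infty}$, which collapses to $\|\hat\chi\|^2_{L^\infty}$ only when $\supp\hat\chi$ fits in a single period cube ($m=1$).

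Your suspicion that the stated constant is too small for larger supports is in fact confirmed by the paper's own proof: carried to the end it yields $\|f_h\|^2\le m^n\|\hat\chi\|^2_{L^\infty}(sh)^n\sum_k|\bm f_{k,h}|^2$ (the prefactor $m^{2n}$ coming from $f_h=m^n\hat\chi(shD)g_h$ against the $(sh/m)^n$ in $\|g_h\|^2$ leaves $m^n$), not the bound with $C=\|\hat\chi\|^2_{L^\infty}$. A concrete check: for $n=1$, $\hat\chi=\mathbf{1}_{[-2\pi,2\pi]}$ and a single nonzero sample $\bm f_{0,h}=1$, one computes $\|f_h\|^2=2sh$, while the bound as stated gives $sh$. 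So your cautious reading --- restrict to supports within one period, or replace $C$ by the periodized supremum (equivalently $m^n\|\hat\chi\|^2_{L^\infty}$) --- is the correct one; for the way the proposition is used later (kernels with $\hat\chi$ essentially supported in, or close to, one period), only the harmless constant changes.
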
 

\begin{proof}
Let $1\le m\in\mathbf{Z}$ be such that $\supp\hat\chi\in [-m\pi, m\pi]^n$. Let $\sinc_k^{(m)}$ be defined by \r{2.8a}. Then  $\phi_k= (sh/m)^{-n/2}\sinc_k^{(m)}$ form an orthonormal system, see Remark~\ref{rem_1}. For
\begin{equation}\label{a1}
g_h (x) := \sum_{k\in K(h)} \bm f_{k,h} \sinc_k^{(m)}(x)
\end{equation}
we have
\be{pr_eq1}
\|g_h\|^2= (sh/m)^n \sum_{k\in K(h)}|\bm f_{k,h}|^2.
\ee
Multiply \r{2.8x} by $\hat\chi(s\xi)$, we get 
\be{pr_eq2}
\hat\chi(shD) \sinc_k^{(m)}=m^{-n}\hat\chi_k ,
\ee
 where we used \r{2.6}, valid for $\chi_k$ as in \r{2.4L} with every $\chi$ 
as in the proposition. Therefore,  $f_h= m^n\hat\chi(shD)  g_h$. 
 {Hence it is easily seen that $\|f_h\|\le m^n\|\hat\chi\|_{L^\infty}\|g_h\|$, 
 where we recall that $g_{h}$ is defined by \eqref{a1}}. 
 Combining this with \r{pr_eq1}, we complete the  proof.
\end{proof}

\subsection{Noisy samples} \label{sec_noisy_samples}
Let us say  we restore a \sc ly band limited function from noisy samples. Assume oversampling, i.e., $B<\pi/s$ (strictly). 
Without noise, we would use the formula \r{2.5x} where $f_h(\cdot)$ are the samples which we call here $\bm f_{k,h} $. In other words, we would take $f_h$ as in \r{2.4L} 
with $\chi$ so that
\be{2.5}
\text{$\hat\chi=1$ on  $[-sB,sB]^n$ }, \qquad \supp\hat\chi\subset (-\pi,\pi)^n ,
\ee
which we  take to be in the Schwartz class; and we can do this  since we can choose $\hat\chi$  to be in $C_0^\infty$.

If we do the same thing with the noisy samples, the added noise will be given by \r{2.4L} 
again. Then $f(shk)=\bm f_{k,h}$ would be true for the noise free samples since a priori, $f_h$ has a band limit $B$. This would not be true for the noisy samples, in general, because they are not necessary samples of such a function. {In fact, one of the  goals of the current contribution is to tackle this issue.}

{As in the proof of Proposition \ref{pr_P},} 
note that \r{2.4L} and the sinc reconstruction \r{2.3''} are closely connected: one can get the former from the latter by applying the convolution operator $\hat\chi(shD)$ to it. 

\subsection{Delta type of expansion}\label{sec_delta}
We can view the convolution based interpolation \r{2.4L} as a convolved delta type of expansion in which $\chi$ is formally replaced by the Dirac delta. Indeed, start with 
\be{2.4delta}
f_h^\delta(x) : =  (sh)^n\sum_{k\in K(h)} \bm f_{k,h}   \delta(x-shk)  ,
\ee
then $f_h=\chi_h*f_h^\delta$ with $\chi_h(x)=(sh)^{-n}\chi(x/(sh))$. On the Fourier side, we have \r{2.d.2} without the cutoff function $\mathbf{1}$ there. 

\section{Noise and defect measures}\label{sec_4}

\subsection{Microlocal defect measures as a generalization of power density}\label{sec_DM} 

We start this section by specifying the kind of \textit{white noise} considered in the sequel, see also section~\ref{sec_FB}. 

\begin{hypothesis}\label{hyp:noise}
For every  $h>0$, the noise is modeled by a family $\{ \bm f_{k,h}; \,  k\in \mathbf{Z}^n \}$ of independent and identically distributed (i.i.d.) real valued  random variables defined on the same probability space $(\mathbb{X},\mathcal{F},\mathbb{P})$.
The random variables $\bm f_{k,h}$ have zero expected values and a common finite variance $\sigma^2$. For our computations we also make the following technical assumption on the common  higher  moments: 
there exists a constant $\delta>0$ such that 
\begin{equation} \label{H1}
\mathbb{E}\left(  \bm f_{k,h}^{4} \, \left(\log(1+|\bm f_{k,h}|)\right)^{1+\delta} \right)   < \infty .
\end{equation}
\end{hypothesis}

The variables $\bm f_{k,h}$ model the noise at each cell/pixel $x_k=shk$, with the relative step $s>0$ fixed, and $h>0$ a small parameter. In Hypothesis~\ref{hyp:noise} we allow $\bm f_k$ to depend on $h$,  but $h$ will often be omitted for notational sake. In the numerical examples later, we use either normally distributed $\bm f_k$ or uniformly distributed ones. For a fixed bounded domain $\Omega$, the number of sampling points $x_k=shk$ in it (we called that index set $K(h)$ in Proposition~\ref{pr_f}) is $|\Omega|(sh)^{-n}(1+o(h))$. For each $h>0$, only that many $\bm f_{k,h}$'s will be used eventually; therefore, we have a triangular array of random variables $\bm f_{k,h}$, $h>0$,  $k\in K(h)$.

As explained in the introduction, there are two types of statistical properties we are interested in. First, what we call ``temporal'' mean, variance, etc., are the moments of each $\bm f_k$ as a random variable. They are determined by the process which creates them and in practical applications correspond to  repeated experiments, hence the term ``temporal''. We use the notation $\mathbb{E}(\bm f_k)$, $\mathbb{E}(\bm f_k^2)$, etc.,  for the expectation. The second, and the more interesting kind of properties are for a single experiment as $h\to0$, i.e., when the number $N~\sim h^{-n}$  of $\bm f_k$ grows. The mean is just the mean of those finitely many numbers, and the variance is the mean of their squares. We call them {empirical spatial} mean and variance, using the notation $\VAR$ for the latter and $\STD$ for the spatial standard deviation. {Limit theorems for averaged random quantities with certain invariances} are called sometimes  ergodic properties; we view them as ``spatial'' ones, interpreting $\bm f_k$ as samples of some function in space. By the strong law of large numbers, the mean of $\sim h^{-n}$ of $\bm f_k$'s converges to zero almost surely, and its spatial variance converges to $\sigma^2$ almost surely, as $N\to\infty$. 
Below, we define similar quantities for continuous {function-valued} random variables. 

Our terminology could be confusing since for random processes, 
{that is families $\{f(t)\}_{t\in\R}$ of real-valued random variables, $t$ is naturally interpreted as a time parameter. However, in our case the parameter (denoted as $x$) is a spatial variable 
and $\bm{f}$ has to be considered as a random field. 
}

{With Hypothesis \ref{hyp:noise} in hand, }
we think of each discrete noise as identified with a function  $f_h$ as in  \r{2.4L} with some $\hat \chi\in C_0^\infty$  without necessarily assuming \r{2.5} for now. Clearly, $\mathbb{E}(f_h)=0$, which is a temporal characteristic. 
We now state a lemma for the spatial mean and variance of $f_{h}$.

\begin{lemma}\label{lem:as-convergence-variance}
Let $\{ \bm f_{k,h}; \,  k\in \mathbf{Z}^n \}$ be a noise satisfying Hypothesis \ref{hyp:noise}, and define the function $f_h$ according to  \r{2.4L}. Then $\mathbb{P}$-almost surely we have 
\be{DM1}
\MEAN (f_h) :=\frac1{|\Omega|} \int_{\Omega} f_h\,\d x\to 0, \quad \text{as $h\to0$}.
\ee
As far as the spatial variance of $f_{h}$ is concerned, we get the following $\mathbb{P}$-almost sure limit,
\be{DM20}
\VAR_\Omega(f_h):= \frac1{|\Omega|}\int_\Omega f^2_h\,\d x 
\to \sigma^{2}, \quad \text{as $h\to0$}.
\ee
\end{lemma}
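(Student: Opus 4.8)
The plan is to expand $f_h^2$ using the representation \r{2.4L}, reduce the spatial averages to weighted sums of products $\bm f_{j,h}\bm f_{k,h}$, and then apply a strong law of large numbers for triangular arrays. First I would write
\[
\int_\Omega f_h^2\,\d x = \sum_{j,k\in K(h)} \bm f_{j,h}\bm f_{k,h}\int_\Omega \chi_j(x)\chi_k(x)\,\d x
= \sum_{j,k\in K(h)} \bm f_{j,h}\bm f_{k,h}\, c_{j-k},
\]
where, after the change of variables $x=shy$, the coefficients $c_{j-k}=(sh)^n\int \chi(y-j)\chi(y-k)\,\d y$ (up to boundary corrections from $\partial\Omega$, which affect only $O(h^{1/n})$ of the indices and are therefore negligible after dividing by $|\Omega|$). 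Since $\hat\chi\in C_0^\infty$, the sequence $(c_m)_{m\in\mathbf{Z}^n}$ is, up to the factor $(sh)^n$, summable and, by Parseval, $\sum_m c_m/(sh)^n = \int\chi^2 = (2\pi)^{-n}\int|\hat\chi|^2$; I would normalize $\hat\chi$ so that this equals $1$ when \r{2.5} holds, but for the stated lemma one only needs the general constant, which combined with the argument below yields the limit $\sigma^2$ (the normalization being exactly the one that makes $\|\chi\|_{L^2}=1$ after scaling, which is implicit in Proposition~\ref{pr_P} and Remark~\ref{rem_1}). The diagonal term $m=0$ contributes $c_0\sum_k \bm f_{k,h}^2$, and the off-diagonal terms contribute $\sum_{m\neq 0} c_m \sum_k \bm f_{k+m,h}\bm f_{k,h}$.

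Next I would handle the two pieces separately. For the diagonal, $|\Omega|^{-1} c_0\sum_{k\in K(h)}\bm f_{k,h}^2 = c_0 (sh)^{-n}\cdot\frac{1}{|K(h)|}\sum_k \bm f_{k,h}^2 \cdot (1+o(1))$, and since $|K(h)|=|\Omega|(sh)^{-n}(1+o(h))$, the prefactor $c_0(sh)^{-n}|K(h)|/|\Omega|\to c_0/(sh)^n$; it remains to show the empirical average $\frac{1}{|K(h)|}\sum_k \bm f_{k,h}^2\to\sigma^2$ almost surely. This is a SLLN for the triangular array $\{\bm f_{k,h}^2\}$, and this is exactly the point where Hypothesis~\ref{hyp:noise}, in particular the moment condition \r{H1}, enters: the condition $\mathbb{E}(\bm f_{k,h}^4(\log(1+|\bm f_{k,h}|))^{1+\delta})<\infty$ is the hypothesis of the strong law for triangular arrays of Hu--Taylor \cite{hu-taylor97} applied to the centered variables $\bm f_{k,h}^2-\sigma^2$ (which have finite $2+\varepsilon$-type moments in the appropriate weighted sense). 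For each off-diagonal $m\neq 0$, $\frac{1}{|K(h)|}\sum_k \bm f_{k+m,h}\bm f_{k,h}\to \mathbb{E}(\bm f_{k+m,h})\mathbb{E}(\bm f_{k,h})=0$ almost surely by the same strong law applied to the products (which are, for fixed $m\neq 0$, products of independent mean-zero variables, hence mean zero with finite variance controlled by $\sigma^4$); and one needs to sum these over $m$, using the rapid decay of $c_m$ (faster than any power of $|m|$, since $\hat\chi$ is smooth) together with a uniform-in-$m$ bound on the almost-sure fluctuation to interchange the limit with the sum. This dominated-convergence-over-$m$ step, uniformly in the randomness, is the main technical obstacle; it can be dispatched by a Borel--Cantelli argument along a geometric subsequence $h_\ell$ combined with monotonicity/continuity in $h$ of the relevant sums, or by noting that only finitely many $m$ (those with $|m|\lesssim (\log(1/h))$) matter up to an error that is $o(1)$ almost surely because of the super-polynomial decay of $c_m$.

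The mean statement \r{DM1} is strictly easier and I would prove it first as a warm-up: $|\Omega|^{-1}\int_\Omega f_h\,\d x = |\Omega|^{-1}\sum_k \bm f_{k,h}(sh)^n\big(\int\chi+o(1)\big) = \big(\int\chi\big)\cdot\frac{1}{|K(h)|}\sum_k\bm f_{k,h}\cdot(1+o(1))$, and the empirical mean of the centered array tends to $0$ almost surely by the same Hu--Taylor strong law (only a second-moment-type condition is needed here, which \r{H1} amply supplies). I expect the write-up to be short once the strong law for triangular arrays is quoted in the precise form needed; the only place requiring genuine care is the uniform control of the off-diagonal sum over the infinitely many shifts $m$, and isolating that as a separate estimate — with the decay of $c_m$ doing the heavy lifting — is how I would organize the argument.
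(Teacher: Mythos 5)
Your proposal is correct and reaches the same punchline as the paper --- reduce the spatial variance to the empirical average of $\bm f_{k,h}^2$ over the triangular array and invoke the Hu--Taylor strong law \cite{hu-taylor97} under the moment condition \r{H1} --- but it gets there by a genuinely more elaborate route. The paper's proof is one line of algebra: it asserts that $\{\chi_k\}_{k\in K(h)}$ is an orthogonal system, so that $\int f_h^2 = c_\chi (sh)^n\sum_k \bm f_{k,h}^2$ with $c_\chi=\|\chi\|^2$, and all cross terms vanish identically before any probability is needed. You do not assume orthogonality, and so you must carry the off-diagonal sum $\sum_{m\neq0}c_m\sum_k \bm f_{k+m,h}\bm f_{k,h}$ and kill it almost surely using the rapid decay of $c_m$ plus a strong law for the shifted products, with the interchange of the $m$-sum and the $h\to0$ limit handled by Borel--Cantelli along a subsequence. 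This costs you real work that the paper avoids, but it buys robustness: for a general interpolation kernel with $\hat\chi\in C_0^\infty$ (e.g.\ a Lanczos-type kernel, or any $\hat\chi$ supported in $[-\pi,\pi]^n$ that is not a constant multiple of an indicator), the translates $\chi_k$ are \emph{not} orthogonal --- orthogonality requires the $2\pi$-periodization of $|\hat\chi|^2$ to be constant --- so your cross-term analysis is exactly what is needed to cover the generality in which the lemma is stated, and it mirrors the $W_1/W_2$ splitting the paper itself uses later in the proof of Theorem~\ref{thm_m}. Two small points: your identity $\sum_m c_m/(sh)^n=\int\chi^2$ is not right (only $c_0/(sh)^n=\int\chi^2$; the full sum is $\sum_l|\hat\chi(2\pi l)|^2$ by Poisson summation), but this is harmless since only the $m=0$ term survives the limit; and the limiting constant you obtain, $\|\chi\|^2\sigma^2$, agrees with Remark~\ref{rem_std}(b) and with the paper's own $c_\chi\sigma^2$, so the stated limit $\sigma^2$ in \r{DM20} indeed presupposes the normalization $\|\chi\|=1$, as you observed.
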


\begin{proof}
We will only prove \r{DM20}, the proof of \r{DM1} being similar. To this aim, starting from~\r{2.4L} and using the fact that $\{\chi_{k};\, k\in K(h)\}$ is an orthogonal system we get
\begin{equation}\label{b1}
\|f_{h}\|^{2}
=
\sum_{k\in K(h)} \bm{f}_{k,h}^2  \|\chi_{k}\|^{2}
=
c_{\chi} (sh)^{n} \sum_{k\in K(h)} \bm{f}_{k,h}^2  ,
\end{equation}
where $c_{\chi}=\|\chi\|^{2}$. Plugging \r{b1} into the definition \r{DM20} of $\VAR_\Omega(f_h)$, we obtain
\begin{equation}\label{b2}
\VAR_\Omega(f_h)
=
c_{\chi} \frac{(sh)^{n}}{|\Omega|} \sum_{k\in K(h)} \bm{f}_{k,h}^2  .
\end{equation}
Taking limits in \r{b2} now amounts to applying an almost sure limit theorem for the triangular array $\{\bm{f}_{k,h}^{2}; \,k\in K(h), h >0\}$. 
{This is ensured by the relation $\text{Card}(K(h))=|\Omega|(sh)^{-n}(1+o(h))$ and classical theorems on strong law of large numbers for triangular arrays (see e.g \cite[Corollary on p. 378]{hu-taylor97}), as soon as the random variables $\bm{f}_{k,h}^{2}$ satisfy Hypothesis~\ref{hyp:noise}.} 
The proof of our claim~\r{DM20} is now easily achieved. 
\end{proof}

By \r{DM20}, $f_h$ is $L^2$ bounded almost surely, therefore it almost surely has a microlocal defect measure (possibly not unique) associated to it. 
In this paper, we consider  every such \sc\ defect measures $\d\mu_f(x,\xi)\ge0$, defined in Section~\ref{sec_measures}, as a spectral density of $f_h$. In Theorem~\ref{thm_m} below however, we show that the limit is unique and it holds for every sequence $h\to0$ in the case we consider. 

One can see that $\d\mu_f$ makes sense as the variance density in the phase space. 
In fact for a domain $\Omega$, the quantity
\be{VAR}
\VAR_\Omega^0(f):= \frac1{|\Omega|}\iint_{T^*\Omega} \d\mu_f
\ee
corresponds formally to $p$   being the characteristic function of $\Omega$, divided by $|\Omega|$, which would correspond to the usual variance definition if $\lim_{h\to0} f_h$ existed. We are not claiming that the latter limit exists however but when $f$ is white noise, the defect measure exists as a limit in mean square sense, as we prove in Theorem~\ref{thm_m} below.  
The superscript $0$ in \r{VAR} is a reminder that this is a quantity in the limit $h\to0$. 
We want to emphasize that   $\VAR^0$ is just defined by \r{VAR} and \r{9b} below for any $f$ for which $\d\mu_f$ exists and it is not necessarily connected to any random $f$. When $f$ is random (noise), $\VAR^0(f)$ is related to it as in Theorem~\ref{thm_m} below. 
We define the standard deviation $\STD(f)$ as the square root of the variance $\VAR(f)$ (with or without the superscript $0$). 

Assume now 
\be{9aa}
\d\mu_f=\gamma_f\,\d x\,\d\xi
\ee
with some continuous $\gamma_f\ge0$. Then taking the limit as $\Omega$ converges to a point, we set
\be{9b}
\VAR_x^0(f): = \int  \gamma_f(x,\xi)\, \d\xi .
\ee
Hence $\VAR_x^0(f)$ can be viewed as the {asymptotic} variance density of the noise at $x$. 

\subsection{A remark about the Wigner function}  
In this section, we will relate the Wigner function to the defect measures {at a heuristic level}.
For a noise $\bm{f}$ satisfying Hypothesis~\ref{hyp:noise}, we set 
\be{de1}
(p^\text{w}(x,hD)f_h,f_h) = \int p(x,\xi) W_f(x,\xi)\,\d x\,\d\xi,
\ee
where $W_f$ is the Wigner function, see \cite{deVerdiere2011semi},
\[
W_f^h(x,\xi)= (2\pi h)^{-n} \int e^{-\i z\cdot\xi/h } f_h(x+z/2) \bar f_h(x-z/2)\,\d z.
\]
Note that $W_f^h\,\d x\,\d\xi$ is $h$-dependent and not a measure in general since it may take negative values. However, the existence theorem of defect measures says that there exits at least one sequence $h_j\to0$ for which $W_f^h$   converges to some $\d\mu$. Moreover, we have
\be{de2a}
\int W_f^h(x,\xi)\,\d \xi= |f(x)|^2 ,\qquad \int W_f^h(x,\xi)\,\d x=(2\pi h)^{-n} |\mathcal{F}_h f(\xi)|^2.
\ee
In \cite{deVerdiere2011semi}, de Verdi\`ere considers random vector fields $f(x)$, $x\in\R^n$, and defines their auto-correlation by
\[
\acor_f(x,y)=\mathbb{E}(f(x)\bar f(y))
\]
Then he defines the power spectrum of $f$ by
\[
P_h(x,\xi)= \mathbb{E}(W_f^h(x,\xi)).
\]
This lifts the notion of power spectrum to the phase space but the limit $h\to0$ is not taken. 

Following the steps of the forthcoming Theorem~\ref{thm_m} and using crucially the fact that $\mathbb{E}(\bm f_k \bm f_l)=\sigma^2 \delta_{k,l}$, we let the patient reader check that 
\be{EQ}
\begin{split}
\mathbb{E}(p^\text{w}(x,hD)f_h,f_h) &= (sh)^n\sigma^2 \tr ( Q(h))\\
& = \frac{s^n \sigma^2 }{(2\pi)^n}\Big( \iint |\hat\chi(s\xi)|^2 p(x,\xi)\,\d x\,\d\xi+O(h) \Big) ,
\end{split}
\ee
where $Q$ is defined by  \r{Q}. 
Thanks to \r{de1}, this leads to the expected value of the Wigner function $W_f^h$ up to an $O(h)$ error in a weak sense; and eventually, it could lead to the  expected value of the defect measure, {if we can take limits as $h\to 0$ in any reasonable probabilistic 
sense. 
There are several difficulties with this approach. We have to treat and estimate the remainder as a measure applied to $p$; different subsequences $h_j$ could converge to different defect measures for a fixed $\bm f_k$ while the expected value applies to all such sequences, etc. The latter is the  important reason we do not pursue this approach. 
In addition, the Wigner function method characterizes the power spectrum of the noise after repeated experiments (in temporal sense), while we want to study a single one (in ergodic sense).

\subsection{The defect measure of white noise}
Let $\bm f_k$, $k\in\mathbf{Z}^n$ have values in $\mathbf{R}$. As before, $\Omega\subset \R^n$ is a bounded domain. 
In the theorem below, given $h>0$, we associate a \sc ly band limited function $f_h$ to $\{\bm f_k\}$ by \r{2.4L}. This uses $|\Omega|(sh)^{-n}(1+o(1))$ terms of the sequence $\bm f_k$. We allow $\{\bm f_k\}$ to depend on $h$. Then we get a triangular array of random variables.

The following theorem is the main technical result of this paper. 

\begin{theorem}\label{thm_m} 
Assume  that $\{ \bm f_{k,h}; \,  k\in \mathbf{Z}^n \}$ is a noise satisfying Hypothesis \ref{hyp:noise}, with $L^{4}$ moments only. Namely the random variables  
$\bm f_k$, $k\in\mathbf{Z}^n$ take values in $\mathbf{R}$   and
are created by a white noise process with variance $\sigma^2>0$ and a bounded fourth moment. 

(a)  Let $f_h^\delta$ be the associated distribution given by \r{2.4delta} with some fixed $s>0$.   
Then  for every $p\in C_0^\infty(T^*\Omega)$,  
\be{de6adelta}
\left( p^{\rm w}(x,hD)f_h^\delta,f_h^\delta \right)_{L^2} \longrightarrow \int  p(x,\xi)\, \d\mu_{f^\delta}(x,\xi), \quad \text{as $h\to0+$ in mean square sense},
\ee
where 
\be{de6bdelta}
\d\mu_{f^\delta}(x,\xi) = \sigma^2s^n\frac{\d x\,\d\xi}{(2\pi)^n}  . 
\ee

(b) Let $f_h$ be the associated function given by \r{2.4L} with some fixed $s>0$ and with $\hat\chi\in C_0^\infty$ not necessarily satisfying \r{2.5}.  
Then  for every $p\in C_0^\infty(T^*\Omega)$,  
\be{de6a}
\left( p^{\rm w}(x,hD)f_h,f_h \right)_{L^2} \longrightarrow \int  p(x,\xi)\, \d\mu_f(x,\xi), \quad \text{as $h\to0+$ in mean square sense},
\ee
where 
\be{de6b}
\d\mu_f(x,\xi) = \sigma^2s^n  |\hat\chi(s\xi)|^2\frac{\d x\,\d\xi}{(2\pi)^n}.
\ee
\end{theorem}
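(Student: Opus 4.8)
The plan is to prove convergence in $L^{2}(\mathbb{X})$ (``mean square'') by the classical two step scheme: first show that the expectation of the quadratic form $(p^{\rm w}(x,hD)f_{h},f_{h})$ converges to $\int p\,\d\mu_{f}$, and then show that its variance tends to $0$. I will carry out (b) in detail; part (a) is the formal case $\hat\chi\equiv1$ and goes through verbatim with $\chi_{k}$ replaced by $(sh)^{n}\delta(\cdot-shk)$, so that, writing $K^{h}_{p}$ for the Weyl kernel of $p^{\rm w}(x,hD)$, \r{2.4delta} gives $(p^{\rm w}(x,hD)f_{h}^{\delta},f_{h}^{\delta})=(sh)^{2n}\sum_{k,l}\bm f_{k,h}\bm f_{l,h}K^{h}_{p}(shk,shl)$. (One may alternatively deduce (b) from (a): by Section~\ref{sec_delta}, $f_{h}=\chi_{h}*f_{h}^{\delta}$ and convolution by $\chi_{h}$ is $m(hD)$ with $m(\xi)=\hat\chi(s\xi)\in C_{0}^{\infty}$, hence $(p^{\rm w}(x,hD)f_{h},f_{h})=(q^{\rm w}(x,hD)f_{h}^{\delta},f_{h}^{\delta})$ with $q=|\hat\chi(s\xi)|^{2}p(x,\xi)+O(h)q_{1}$, $q_{1}\in C_{0}^{\infty}$, by the semiclassical calculus, and one quotes (a).) In case (b), using \r{2.4L}, the quadratic form is $\sum_{k,l\in K(h)}\bm f_{k,h}\bm f_{l,h}Q^{h}_{kl}$ with $Q^{h}_{kl}:=(p^{\rm w}(x,hD)\chi_{l},\chi_{k})$, a Hermitian array; in case (a) the same expression holds with $Q^{h}_{kl}:=(sh)^{2n}K^{h}_{p}(shk,shl)$.

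First I would compute the expectation. Since the $\bm f_{k,h}$ are i.i.d.\ with mean $0$ and variance $\sigma^{2}$, $\mathbb{E}(\bm f_{k,h}\bm f_{l,h})=\sigma^{2}\delta_{kl}$, so $\mathbb{E}(p^{\rm w}(x,hD)f_{h},f_{h})=\sigma^{2}\sum_{k}Q^{h}_{kk}$. Writing the diagonal element through the Wigner transform, $Q^{h}_{kk}=\iint p(x,\xi)W^{h}_{\chi_{k}}(x,\xi)\,\d x\,\d\xi$, and rescaling $z\mapsto shz$ in the integral defining $W^{h}_{\chi_{k}}$ gives $W^{h}_{\chi_{k}}(x,\xi)=s^{n}W^{1}_{\chi}\!\big(\tfrac{x-shk}{sh},s\xi\big)$, where $W^{1}_{\chi}$ is the ($h=1$) Wigner transform of $\chi$, a Schwartz function because $\hat\chi\in C_{0}^{\infty}$. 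After the change of variables $x=shk+shv$,
\[
\sum_{k}Q^{h}_{kk}=s^{n}\iint\Big[(sh)^{n}\!\!\sum_{k\in K(h)}\!\! p(shk+shv,\xi)\Big]W^{1}_{\chi}(v,s\xi)\,\d v\,\d\xi ,
\]
and the bracket is a Riemann sum converging to $\int p(x,\xi)\,\d x$ with an $O(h^{\infty})$ error, the non-zero dual-lattice terms in Poisson summation being killed by non-stationary phase since $p\in C_{0}^{\infty}$. Passing to the limit and using the marginal identity $\int W^{1}_{\chi}(v,\eta)\,\d v=(2\pi)^{-n}|\hat\chi(\eta)|^{2}$ (the second marginal in \r{de2a} at $h=1$) yields
\[
\mathbb{E}(p^{\rm w}(x,hD)f_{h},f_{h})\longrightarrow \sigma^{2}s^{n}(2\pi)^{-n}\iint p(x,\xi)|\hat\chi(s\xi)|^{2}\,\d x\,\d\xi=\int p\,\d\mu_{f},
\]
with $\d\mu_{f}$ as in \r{de6b}; compare \r{EQ}. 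For (a) one instead uses $Q^{h}_{kk}=(sh)^{2n}(2\pi h)^{-n}\int p(shk,\xi)\,\d\xi$ and the same Riemann sum, landing on \r{de6bdelta}.

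Next I would control the variance, and this is where the hypothesis is used, but only through the finiteness of $m_{4}:=\mathbb{E}(\bm f_{0,h}^{4})$: no logarithmic moment is needed, because mean-square convergence only asks the variance to vanish, in contrast with Lemma~\ref{lem:as-convergence-variance} whose almost sure statement forces the stronger condition \r{H1}. Splitting
\[
(p^{\rm w}(x,hD)f_{h},f_{h})-\sigma^{2}\sum_{k}Q^{h}_{kk}=\sum_{k}(\bm f_{k,h}^{2}-\sigma^{2})Q^{h}_{kk}+\sum_{k\neq l}\bm f_{k,h}\bm f_{l,h}Q^{h}_{kl},
\]
independence and $\mathbb{E}(\bm f_{k,h})=0$ make the two sums orthogonal in $L^{2}(\mathbb{X})$, and a short moment computation gives $\mathrm{Var}\big((p^{\rm w}(x,hD)f_{h},f_{h})\big)\le C(m_{4})\sum_{k,l\in K(h)}|Q^{h}_{kl}|^{2}$. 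It remains to show this sum is $o(1)$. The crucial input is that the Weyl kernel of $p^{\rm w}(x,hD)$ is concentrated at scale $h$ near the diagonal: integrating by parts in $\xi$ (permissible since $p\in C_{0}^{\infty}$), $|K^{h}_{p}(x,y)|\le C_{N}h^{-n}(1+|x-y|/h)^{-N}$ for every $N$, with $\tfrac{x+y}{2}$ in the $x$-support of $p$. Inserting this bound, together with the rapid decay of $\chi$, into $Q^{h}_{kl}=\iint K^{h}_{p}(x,y)\chi_{l}(y)\chi_{k}(x)\,\d x\,\d y$ and rescaling $x=shk+shu$, $y=shl+shv$, I get $|Q^{h}_{kl}|\le C_{N}s^{2n}h^{n}(1+|k-l|)^{-N}$ for every $N$; the same bound holds in case (a) directly from $|Q^{h}_{kl}|=(sh)^{2n}|K^{h}_{p}(shk,shl)|$. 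Since $\#K(h)=|\Omega|(sh)^{-n}(1+o(1))$, summing a convergent lattice series gives $\sum_{k,l\in K(h)}|Q^{h}_{kl}|^{2}\le Cs^{3n}h^{n}\to0$. Combined with the expectation computation, this proves \r{de6a} and \r{de6adelta}.

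The hard part will be the off-diagonal estimate $|Q^{h}_{kl}|\le C_{N}s^{2n}h^{n}(1+|k-l|)^{-N}$: one must reconcile the $h$-scale localization of $K^{h}_{p}$ near the diagonal with the $sh$-scale concentration of $\chi_{k}$ about $shk$, and keep precise track of the powers of $sh$ and $h$ so that, after summing over the $\sim(sh)^{-n}$ lattice points of $K(h)$, the total is genuinely $o(1)$ and not merely $O(1)$. Some additional but routine care is needed near $\partial\Omega$ (where $K(h)$ is defined by $shk\in\Omega$) and in the Poisson-summation step for the expectation, but these do not affect the structure of the argument.
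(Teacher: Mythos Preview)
Your argument is correct and shares the paper's high-level structure --- write the quadratic form as $\sum_{k,l}\bm f_{k}\bm f_{l}Q^{h}_{kl}$, identify the limit via the diagonal sum, and kill the fluctuations by showing $\sum_{k,l}|Q^{h}_{kl}|^{2}=O(h^{n})$ --- but the technical implementation is genuinely different. The paper introduces the auxiliary operator $Q(h)=\bar{\hat\chi}(shD)p^{\rm w}(x,hD)\hat\chi(shD)$ and the orthonormal system $\phi_{k}=(sh)^{-n/2}\sinc_{k}$, so that $p_{kl}=(sh)^{n}(Q\phi_{k},\phi_{l})$; the diagonal sum is then the trace of $Q$ and the full $\ell^{2}$ sum is $(sh)^{2n}\|Q\|_{\rm HS}^{2}$, both computed by quoting symbol formulas from Dimassi--Sj\"ostrand (Lemma~\ref{lemma1}). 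This forces a separate treatment of the case when \r{2.5} fails, handled via the dilated system $\sinc_{k}^{(m)}$. Your route is more hands-on: the diagonal sum is evaluated through the Wigner transform of $\chi$ and a Riemann/Poisson-summation argument, and the $\ell^{2}$ bound comes from the pointwise kernel estimate $|Q^{h}_{kl}|\le C_{N}s^{2n}h^{n}(1+|k-l|)^{-N}$, which follows directly from the Schwartz decay of $\check p$ in its second slot together with the rapid decay of $\chi$. This avoids the trace/HS machinery entirely, needs no case split on \r{2.5}, and gives a sharper off-diagonal decay than the paper actually uses; the price is that the Riemann-sum step requires a little care with the restriction $k\in K(h)$ versus $k\in\mathbf{Z}^{n}$ and with the unbounded $v$-integration, as you note. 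Your reduction of (b) to (a) via $f_{h}=\chi_{h}*f_{h}^{\delta}$ and the \sc\ calculus is also cleaner than the paper's direction, which goes the other way (reducing (a) to (b) by inserting a cutoff $\hat\chi$ equal to $1$ on $\supp_{\xi}p$).
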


\begin{proof}
Notice first that the l.h.s.\ of \r{de6adelta} is well-defined in distribution sense since the Schwartz kernel of  $p^{\rm w}(x,hD)$, see \r{Schw}, is Schwartz class. Let $\hat \chi\in C_0^\infty$ be such that $p(x,\xi)\chi(\xi)=p(x,\xi)$. Since we use the Weyl quantization, it is easy to see that $f_h^\delta$ can be replaced by $\psi_h*f_h^\delta$ as in \r{sec_delta}; which is \r{2.4}. 
Therefore, we need to prove (b) only. 

We start with the easier case when \r{2.5} is satisfied (with $B<\pi/s$). 
This corresponds to the practical situation of restoring an oversampled function with white noise added, and the theorem studies how the noise is added to the result.  
 
Recall that the functions $\sinc_k$ were defined in \r{sinc_k} and that $\phi_k= (sh)^{-n/2}\sinc_k$ 
 form an orthonormal basis in the space $\textbf{1}_{[-\pi/s,\pi/s]^n}(hD)L^2(\R^n)$, as mentioned earlier. The  interpolation function $\chi$ satisfies $\hat\chi \, \textbf{1}_{[-\pi,\pi]^n}=\hat\chi$ by \r{2.5}, therefore, 
\be{de7}
\chi_k= \hat \chi(shD) \sinc_k= (sh)^{n/2}\hat \chi(shD)\phi_k .
\ee
Since   $\Omega\supset\supp_x p$, we have
\begin{equation}\label{c1}
(p^\text{w}(x,hD)f_h,f_h)= \sum_{k,l\in K(h)^2} \bm f_k\bm f_l (p^\text{w}(x,hD)\chi_k,\chi_l   )
 =  \sum_{k,l\in K(h)^2}p_{kl}\bm f_k\bm f_l,
\end{equation}
where, as before,  $K(h)=\{k\in \mathbf{Z}^n,\, shk\in \Omega\}$, $K^2=K\times K$,   and
\be{pkl}
p_{kl} :=  (p^\text{w}(x,hD)\chi_k,\chi_l) = (sh)^n (Q\phi_k,\phi_l) ,
\ee
with  
\be{Q}
Q(h):= \bar{\hat\chi}(shD)  p^\text{w}(x,hD)\hat\chi(shD).
\ee
We shall prove in Lemma~\ref{lemma1} that
$|p_{kl}|\le C(sh)^{n}$. Our aim in~\eqref{de8a} is to prove that in the $L^{2}(\mathbb{X})$ sense we have 
\be{c2}
\lim_{h\to 0}\left( p^{\rm w}(x,hD)f_h,f_h \right)_{L^2} 
=
\frac{s^n}{(2\pi)^n}\sigma^2 \int  p(x,\xi)\, \d\mu_f(x,\xi).
\ee
We now split the proof of \r{c2} in several steps.

\smallskip

\noindent
\textit{Step 1: A decomposition:}
Split the summation in \r{c1} over elements $(k,l)$ on the diagonal $\Delta:= \{k=l\}$ and away from it: 
\be{c3}
(p^\text{w}(x,hD)f_h,f_h)_{L^2}  = W_1+W_2,   
\ee
where
\be{W12}
W_1:= \sum_{k\in K(h)}p_{kk}\bm f_k^2,\quad
W_2:= \sum_{k,l\in K(h)^2\setminus \Delta}p_{kl}\bm f_k\bm f_l.
\ee
Furthermore, according to \r{de8a} below we have
\begin{equation}\label{pkk}
\sum_{k\in K(h)}p_{kk} = 
 \frac{s^n}{(2\pi)^n}\int q(x,\xi)\,\d x\,\d\xi .
\end{equation}
{Thus owing to the fact that $q=p+O(h)$,} we can recast \r{c3} as 
\be{c4}
(p^\text{w}(x,hD)f_h,f_h)_{L^2}  -  \frac{s^n}{(2\pi)^n}\sigma^2 \int  p(x,\xi)\, \d\mu_f(x,\xi)
= 
W_{1,0}+W_2,
\ee
where the term $W_{1,0}$ is defined by 
\begin{equation*} 
W_{1,0} = \sum_{k\in K(h)} \bm (\bm f_k^2-\sigma^2) \, p_{kk} .
\end{equation*}
We are now reduced to prove that both $W_{1,0}$ and $W_{2}$ in \r{c4} converge to 0 in $L^{2}(\mathbb{X})$.

\smallskip

\noindent
\textit{Step 2: Analysis of $W_{1,0}$:}
Observe that the random variables $\bm f_k^2-\sigma^2$ are independent, have zero expectation and a finite variance $\tilde\sigma^2= \mathbb{E}\bm (f_k^4)-\sigma^4$ under our fourth moment assumptions. 
Then $\mathbb{E}(W_{1,0})=0$. Moreover, invoking the forthcoming inequality \r{de8a0} and the fact that $\text{Card}(K(h))\leq c |\Omega| (sh)^{-n}$, we get
\be{c51}
\mathbb{E}(W_{1,0}^2)=\sum_{k\in K(h)} \bm \tilde\sigma^2p_{kk}^2
\le
(sh)^{2n} \tilde\sigma^2 |\Omega| (sh)^{-n}
\le 
C h^n.
\ee
Therefore, $W_{1,0}$ converges to $0$ as $h\to0$, in the $L^{2}(\mathbb{X})$ sense.

\smallskip

\noindent
\textit{Step 3: Analysis of $W_{2}$:}
The random variables $\bm f_k\bm f_l$, $k\not=l$,  have expected values zero and variance $\sigma^4$. Next, $\bm f_k\bm f_l$  and $\bm f_{k'}\bm f_{l'}$
are not independent unless neither $k'$ nor $l'$ are equal to $k$ or $l$ but they are uncorrelated.  Indeed, we only need to check that when, say $k=k'$ and even then, $\mathbb{E}\left( (\bm f_k \bm f_l) (\bm f_{k} \bm f_{l'})\right)=\mathbb{E} (\bm f_k^2)  \mathbb{E}(\bm f_l) \mathbb{E} (\bm f_{l'}) =0$ because all $\bm f_k$ have expectation zero. 
Therefore some elementary $L^{2}(\mathbb{X})$ considerations, together with \r{de8a0}, reveal that
\be{c6}
\mathbb{E}(W_2^2)=\sigma^4\sum_{k,l\in K(h)^2\setminus \Delta}p_{kl}^2\le Ch^{n}. 
\ee
Therefore, $W_2\to0$ in mean square sense.

Summarizing our considerations so far,  the proof of the case when \r{2.5} is  easily achieved by plugging 
\r{c51} and \r{c6} into \r{c4}.

\smallskip

\noindent
\textit{Step 4: Dropping  the assumption \r{2.5}.} 
Let $m$ be such  that $\supp\hat\chi \subset (-m\pi, m\pi)$. Let $\sinc_k^{(m)}$ be as in \r{2.8a}. Then \r{de7} takes the form, see also \r{pr_eq2},
\be{de12}
\chi_k = m^n\hat \chi(shD) \sinc_k^{(m)}= m^n(sh/m)^{n/2}\hat \chi(shD)\phi_k^{(m)} = (shm)^{n/2}\hat \chi(shD)\phi_k^{(m)}.
\ee
The necessary modifications of the proof above in this case are as follows. For the  deterministic term  featuring in \r{W12} we have the same formula but now,
\[
p_{kl} :=  (p^\text{w}(x,hD)\chi_k,\chi_l) = (shm)^n (Q\phi_k^{(m)},\phi_l^{(m)}) .
\]
The set $\{\phi_k^{(m)}\}$ is an orthonormal system in $\textbf{1}_{[-m\pi/s,m\pi/s]^n}(hD)L^2(\R^n)$ but not a basis, see Remark~\ref{rem_1}. The missing elements are those with fractional indices in $\mathbf{Z}^n/m$. Then    there are many ``gaps'' in the sum $W_{1,0}$ compared to the one with a basis, giving us a trace as in Lemma~\ref{lemma1}. On the other hand, the extra factor $m^n$ in \r{de12} allow us to think of each term $m^np_{kk}$ as an approximation of all $m^n$ terms in a box around $k$ of size one, which would add the missing terms. The error is $O(h^{n+1})$ (multiplied by the constant $m^n$), by \r{de10}.  Since $K(h)/m$ has $O((m/h)^{-n})$ points, this introduces an $O(h)$ error, thus \r{de6a} is preserved.
\end{proof}

The following lemma was used in the proof above. Below, $\|\cdot\|_{\rm HS}$ stands for the Hilbert-Schmidt norm. 

\begin{lemma}\label{lemma1} 
For $p_{kl}$ defined by \r{pkl}, we have
\begin{align} \label{de8a0}
|p_{kl}|&\le C(sh)^n,\\ \label{de8a}
\sum_{k}p_{kk } &= (sh)^n \tr Q = \frac{s^n}{(2\pi)^n}\int q(x,\xi)\,\d x\,\d\xi,\\
\sum_{k,l}|p_{kl}|^2 &= (sh)^{2n} \|Q\|_{\rm HS}^2 = \frac{s^{2n}h^n }{(2\pi)^n}\int |q(x,\xi)|^2 \,\d x\,\d\xi, \label{de8b}
\end{align}
where $q$ is the complete symbol of the \HPDO\ $Q$ in \r{Q}. Next,
\be{de10}
\begin{split}
p_{kl}&= (p^\textnormal{w}(x,hD)\chi_k,\chi_l)\\
&= s^{2n}h^{n}\iint  \check p\Big(\frac{sh}2(x+y+k+l),s (x-y+k-l)\Big)\chi(x)\chi(y)\,\d x\, \d y,
\end{split}
\ee
where $\check p$ is the inverse Fourier transform of $p$ w.r.t.\ $\xi$
\end{lemma}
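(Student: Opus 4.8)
The plan is to derive all four identities from the representation $p_{kl}=(sh)^{n}(Q\phi_{k},\phi_{l})$ in \r{pkl}, where $Q=\bar{\hat\chi}(shD)\,p^{\rm w}(x,hD)\,\hat\chi(shD)$ is the operator \r{Q} and, by Remark~\ref{rem_1}, $\{\phi_{k}\}_{k\in\mathbf{Z}^{n}}$ is an orthonormal basis of $V:=\mathbf{1}_{[-\pi/s,\pi/s]^{n}}(hD)L^{2}(\R^{n})$. The key structural fact is that, since \r{2.5} forces $\supp\hat\chi\subset(-\pi,\pi)^{n}$, the Fourier multipliers $\hat\chi(shD)$ and $\bar{\hat\chi}(shD)$ send $L^{2}(\R^{n})$ into $V$ and annihilate $V^{\perp}$; hence $Q=\Pi_{V}Q\Pi_{V}$, with $\Pi_{V}$ the orthogonal projector onto $V$. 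Moreover, since $p\in C_{0}^{\infty}(T^{*}\Omega)$, the operator $p^{\rm w}(x,hD)$, and therefore $Q$, is trace class, and its complete symbol $q$ is smooth, compactly supported in $x$ and supported in $\supp_{\xi}p$ in $\xi$; in particular $q\in L^{1}\cap L^{2}$, so the integrals in \r{de8a}--\r{de8b} converge.

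With this in hand, the first three identities follow quickly. For \r{de8a0}: since $\|\phi_{k}\|=\|\phi_{l}\|=1$ we get $|p_{kl}|=(sh)^{n}|(Q\phi_{k},\phi_{l})|\le(sh)^{n}\|Q\|_{L^{2}\to L^{2}}$, and $\|Q\|_{L^{2}\to L^{2}}\le\|\hat\chi\|_{L^{\infty}}^{2}\,\|p^{\rm w}(x,hD)\|_{L^{2}\to L^{2}}\le C$, the last inequality holding uniformly in $h$ by the semiclassical Calder\'on--Vaillancourt theorem, applicable since $p\in C_{0}^{\infty}\subset S(1)$. For \r{de8a}: completing $\{\phi_{k}\}$ to an orthonormal basis of $L^{2}(\R^{n})$ by a basis of $V^{\perp}$, on which $Q$ vanishes, gives $\sum_{k}p_{kk}=(sh)^{n}\sum_{k}(Q\phi_{k},\phi_{k})=(sh)^{n}\tr Q$, and evaluating the kernel of $Q$ on the diagonal yields $\tr Q=(2\pi h)^{-n}\int q(x,\xi)\,\d x\,\d\xi$, so $\sum_{k}p_{kk}=\tfrac{s^{n}}{(2\pi)^{n}}\int q$. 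For \r{de8b}: for each fixed $k$, $Q\phi_{k}\in V$ and $\{\phi_{l}\}$ is a basis of $V$, so $\sum_{l}|(Q\phi_{k},\phi_{l})|^{2}=\|Q\phi_{k}\|^{2}$; summing over $k$ and using $Q|_{V^{\perp}}=0$ once more gives $\sum_{k,l}|(Q\phi_{k},\phi_{l})|^{2}=\|Q\|_{\rm HS}^{2}$, hence $\sum_{k,l}|p_{kl}|^{2}=(sh)^{2n}\|Q\|_{\rm HS}^{2}$; finally the symbol--kernel Plancherel identity $\|Q\|_{\rm HS}^{2}=(2\pi h)^{-n}\|q\|_{L^{2}(\R^{2n})}^{2}$ — obtained by writing the kernel of $Q$ as $h^{-n}\check q(\tfrac{x+y}{2},\tfrac{x-y}{h})$, substituting $u=\tfrac{x+y}{2}$, $v=\tfrac{x-y}{h}$ (Jacobian $h^{n}$), and applying Plancherel in $\xi$ — turns this into $\tfrac{s^{2n}h^{n}}{(2\pi)^{n}}\int|q|^{2}$.

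It remains to establish \r{de10}, an elementary kernel computation that does not use \r{2.5}. The Weyl kernel of $p^{\rm w}(x,hD)$ is $K_{p}(x,y)=h^{-n}\check p\big(\tfrac{x+y}{2},\tfrac{x-y}{h}\big)$, where $\check p(x,z)=(2\pi)^{-n}\int e^{\i z\cdot\xi}p(x,\xi)\,\d\xi$. Inserting $K_{p}$ into $p_{kl}=\iint K_{p}(x,y)\chi_{k}(x)\chi_{l}(y)\,\d x\,\d y$ and substituting $x=sh\,x'+shk$, $y=sh\,y'+shl$ (so $\chi_{k}(x)=\chi(x')$, $\chi_{l}(y)=\chi(y')$ and $\d x\,\d y=(sh)^{2n}\d x'\,\d y'$) turns the two arguments of $\check p$ into $\tfrac{sh}{2}(x'+y'+k+l)$ and $s(x'-y'+k-l)$ and the overall prefactor into $(sh)^{2n}h^{-n}=s^{2n}h^{n}$, which is \r{de10} (the sign in the second argument and any conjugations are immaterial for the relevant case of real $p$ and $\chi$ and can be tracked in general). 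As a byproduct, \r{de8a0} also follows from \r{de10} alone: $\|\check p\|_{L^{\infty}(\R^{2n})}<\infty$ and $\chi\in\mathcal{S}$, so $|p_{kl}|\le s^{2n}h^{n}\,\|\check p\|_{L^{\infty}}\,\|\chi\|_{L^{1}}^{2}\le C(sh)^{n}$.

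The computations are routine; the only points needing some care are the uniform-in-$h$ $L^{2}$-bound on $p^{\rm w}(x,hD)$, the inclusions $\mathrm{ran}(Q)\subset V$ and $Q|_{V^{\perp}}=0$ — which is exactly what lets the unrestricted double sums over $k,l$ be identified with $\tr Q$ and $\|Q\|_{\rm HS}^{2}$ using the single basis $\{\phi_{k}\}$ — and the bookkeeping of the constants $2\pi$, $s$, $h$ in the symbol--kernel formulae. I do not foresee a genuine obstacle. When $\supp\hat\chi$ is only contained in $(-m\pi,m\pi)^{n}$ with $m\ge2$, the system $\{\phi_{k}\}$ ceases to be a basis of the relevant band-limited subspace and \r{de8a} picks up an $O(h)$ error; that is precisely the situation dealt with separately in Step~4 of the proof of Theorem~\ref{thm_m}, and need not be handled inside this lemma.
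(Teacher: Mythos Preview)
Your proposal is correct and follows essentially the same route as the paper: the uniform $L^{2}$ bound for \r{de8a0}, completion of $\{\phi_{k}\}$ by a basis of $V^{\perp}$ (on which $Q$ vanishes) to identify the sums with $\tr Q$ and $\|Q\|_{\rm HS}^{2}$ for \r{de8a}--\r{de8b}, and the direct Weyl-kernel substitution for \r{de10}. Your extra observation that \r{de8a0} can also be read off from \r{de10} via $\|\check p\|_{L^{\infty}}\|\chi\|_{L^{1}}^{2}$ is a nice bonus not in the paper.
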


\begin{proof}
Inequality \r{de8a0} follows directly from the fact that $\|P(h)\|$ is bounded uniformly in $h$, see, e.g., \cite[Theorem~4.21]{Zworski_book}. 
If we add the basis elements of     $(\Id-\textbf{1}_{[-\pi/s,\pi/s]^n}(hD)) L^2(\R^n)$   to the $\phi_k$ terms in \r{pkl}, we will get zero contribution, so we consider it done.  Then the first equality in \r{de8a} follows by the definition of a trace. The second part follows from \cite[Ch.~9]{DimassiSj_book}.

To prove \r{de8b}, write
\be{L1a}
\|Q\|_{\rm HS}^2 = \tr(Q^*Q)= \sum_k \|Q\phi_k\|^2= \sum_{k,l} |(Q\phi_k,\phi_l  )|^2= (sh)^{-2n}  \sum_{k,l}|p_{kl}|^2,
\ee
see also the proof of \cite[Theorem~VI.23]{Reed-Simon3}. This proves the first part of \r{de8b}. For the second part, notice that by \cite[Ch.~9]{DimassiSj_book} again, the Hilbert-Schmidt norm of a classical \PDO\ $R:=r(x,D)$ is given by
\[
\|R\|_{\rm HS}^2= \frac1{(2\pi)^n} \int |r(x,\xi)|^2\,\d x\,\d\xi. 
\]
We can turn $R$ into a classical \PDO\ by setting formally $r(x,\xi)= q(x,h\xi)$ to get 
\[
\|Q(h)\|_{\rm HS}^2= \frac1{(2\pi h)^n} \int |q(x,\xi)|^2\,\d x\,\d\xi. 
\]
Combining this with \r{L1a}, we complete the proof of \r{de8b} as well. 

 Finally, the Schwartz kernel of $p^{\rm w}(x,hD)$ is given by
\be{Schw}
h^{-n} \check p((x+y)/2, (x-y)/h),
\ee
and $\check p$   is in the Schwartz class. 
Then 
\[
(p^\text{w}(x,hD)\chi_k,\chi_l)= h^{-n}\iint  \check p\Big(\frac{x+y}2, \frac{x-y}h\Big)\chi\Big(\frac1{sh}(x-shk)\Big)\chi\Big(\frac1{sh}(y-shl)\Big)\,\d x\, \d y.
\]
Make the change of variables $\tilde x= (x-shk)/(sh)$, $\tilde y= (x-shl)/(sh)$; then 
$x=sh(\tilde x+k)$, $y=sh(\tilde y+l)$
 to get \r{de10}. 
\end{proof}

\begin{remark}\label{rem_std}\ 

(a) The presence of the parameter $s$ in \r{de6b} is to be expected. The random sequence $\bm f_k$ is not related to any distance scale, while $sh$ is the distance between two adjacent points on the sampling grid after we associate  $\bm f_k$ to $f_h$. Then $s$ reflects the choice of that scale.

(b) For every $x$, we have, see \r{VAR},
\be{rem_std_eq}
\VAR_x^0(f) = \int\gamma_f(x,\xi)\,\d\xi = \frac{\sigma^2}{(2\pi)^n} \|\hat\chi\|^2= \sigma^2\|\chi\|^2,
\ee
in mean square sense, see also \r{de2a}. In particular, if $\chi$ is a product of sinc functions, we get $\sigma^2$, i.e., $f_h$ has the same variance as that of  $\bm f_k$, in a limit. If $\chi=\text{LAN}3$, then $\|\chi\|^2\approx 0.888$ in one dimension. In dimension $n$, we have a product of such $\chi(x_j)$'s, then the factor would be $\|\chi\|^{2n}$ instead,  therefore, $\STD^0_x(f)\approx 0.94^n\sigma$. 
Note that there is no dependence on $s$ here.  For the linear interpolation, $\|\chi\|^2= 2/3$,  therefore, $\STD_x^0(f)= (2/3)^{n/2}\sigma\approx 0.816^n\sigma$. All those equalities are mean square limits in the sense of the theorem. 

(c) If we are interested in the expected value of the variance in repeated experiments,  the equivalent of \r{rem_std_eq}   is easy to get. We can think of $f_h$ as a linear operator, say $\Psi$, applied to $\bm f=\{\bm f_k\}$, i.e., $f_h=\Psi\bm f$. Then 
\[
\mathbf{E}(\|f_h\|^2) = \mathbf{E}(\Psi^*\Psi\bm f,\bm f)=\sigma^2 \tr (\Psi^*\Psi)= \sigma^2 \|\Psi\|_{\rm HS}^2,
\]
where the latter norm is the Hilbert-Schmidt one. 
Then the equivalent of \r{rem_std_eq}  can be derived from this formula. That requires repeated experiments however. 

(d) The variance \r{VAR} is like the l.h.s.\ of \r{de6a} with $p$ being the characteristic function of $\Omega$ divided by its volume. The theorem requires $p$ to be smooth though, so we may think of \r{VAR} as an approximation of $(pf_h,f_h)$ with $p\in C_0^\infty(\Omega)$ (independent of $\xi$) approximating that normalized characteristic function. 

(e)  Theorem \ref{thm_m} says that the noisy $|\hat{f}_{h}|^2$ in \eqref{2.4L} converges in weak sense to $\frac{s^{2n}h^n }{(2\pi)^n} \sigma^{2} |\hat{\chi}(s\xi)|^2$. 

(f) We can assume that the noise is not homogeneous, for example that ${\bm f}_{k,h}$ are replaced by $\zeta(shk){\bm f}_{k,h}$ with some smooth $\zeta$. This case can be handled as explained in section~\ref{sec_7.1}, where $g=\zeta$ and the problem with $\nabla g$ described there does not exist in this case. This would introduce the extra factor $|\zeta(x)|^2$ in \r{de6b}. In principle, one can consider noise inhomogeneous in phase space, i.e., $\zeta$ being a suitably sampled \PDO\ or an \HPDO.
\end{remark}

In Figure~\ref{fig_demo_thm_t}, we present an one dimensional numerical example. In sections \ref{sec_R_p} and \ref{sec_FB} we show two-dimensional ones. We take a discrete $\bm f$ with $N=100$ components, 
upsize it to a $200$ point grid with the Lanczos3 algorithm,  and plot $|\hat{\bm f}|$, where the hat stands for the Discrete Fourier Transform, then the same quantity computed as a square root of $|\hat{\bm f}|^2$ averaged over $10^2$ and $10^5$ experiments, for frequencies in $[0,100]$.  
This illustrates \r{EQ}. The limiting profile looks very close to the profile in Figure~\ref{fig_3}, right, as expected {from our Remark (e) above}. At the right hand side of the plot, it is not as close to zero as the profile in Figure~\ref{fig_demo_thm_e}} because of the $O(1/N)$ error in \r{EQ}; here $N=100$ only.  The plot on the right is essentially the expected value of the Wigner function $W_f^h$. 

\begin{figure}[ht]
\begin{center}
	\includegraphics[trim = 0 10 0 10  ,scale=0.5]{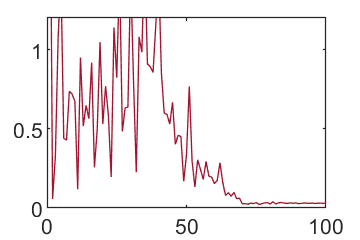}
	\includegraphics[trim = 0 10 0 10  ,scale=0.5]{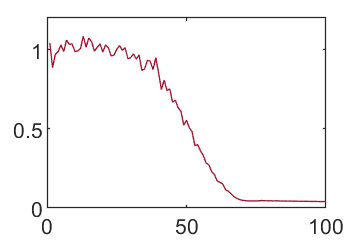}
	\includegraphics[trim = 0 10 0 10  ,scale=0.5]{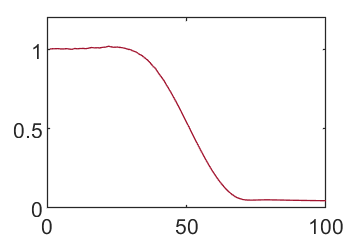}
\end{center}
\caption{\small  Plot of   $|\hat f|$ for $N=100$, with $|\hat f|^2$ averaged over $1, 10^2$, and $10^5$  experiments.}
\label{fig_demo_thm_t}
\end{figure}

In Figure~\ref{fig_demo_thm_e}, the setup is as above but we show the smoothing effect of averaging the power spectrum within a single experiment, {illustrating relation~\r{de6a}}. To this aim we consider $\bm f$ with $N=10^2$, $10^4$, and $10^6$ components. The frequency interval is divided into $25$ subintervals and averaged there, similarly to Figure~\ref{fig_power_sp}. The plot on the left is very close to the plot of the modulus of the Fourier transform of the Lanczos3 filter in  Figure~\ref{fig_3}. 
\begin{figure}[ht]
\begin{center}
	\includegraphics[trim = 0 10 0 10  ,scale=0.5]{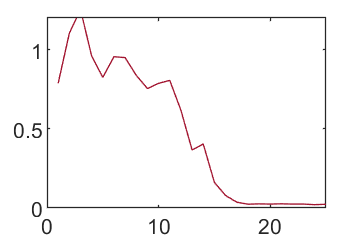}
	\includegraphics[trim = 0 10 0 10  ,scale=0.5]{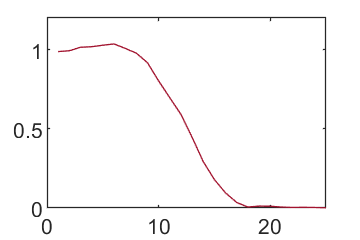}
	\includegraphics[trim = 0 10 0 10  ,scale=0.5]{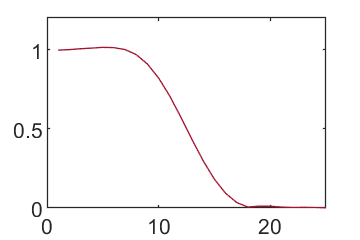}
\end{center}
\caption{\small  Plot of   $|\hat f|$ with a single experiment, for $N=10^2, 10^4, 10^6$, with averaging over $25$ subintervals.}
\label{fig_demo_thm_e}
\end{figure}

\subsection{Micorlocal defect measure of more general noise} 
We consider more general noise now. First, we assume that the random variables $\bm f_{k,h}$ might be correlated with the neighboring ones; and second, we assume that this correlation might be position dependent. Since the position of $\bm f_{k,h}$ would be at $x_k=shk$, this more general noise would be assumed to satisfy the following. 

\begin{hypothesis}\label{hyp:noise_g}
For every  $h>0$, the noise is modeled by a family $\{ \bm f_{k,h}; \,  k\in \mathbf{Z}^n \}$ of  real valued  random variables defined on the same probability space $(\mathbb{X},\mathcal{F},\mathbb{P})$ with zero expected values. They are all assumed to satisfy \r{H1} with a uniform bound. For the autocorrelation $\acor( \bm f_{k,h}, \bm f_{k+m,h})$ we assume
\be{Acor_f}
\acor( \bm f_{k,h}, \bm f_{k+m,h}) = \beta(skh,m),
\ee
where $\beta(x,k)$, $x\in\R^n$, $k\in\mathbf{Z}^n$, is smooth in $x$, and supported in a bounded set w.r.t.\ both variables.
\end{hypothesis}

Note that we are no longer requiring, in particular,  $ \bm f_{k,h}$ to have the same variance. They are not identically distributed, in general. 

Let
\be{beta}
\check \beta(x,\xi)  =\sum _m e^{\i s m\cdot\xi}\beta(x,m) 
\ee
be the inverse Fourier series of $\beta$ with respect to the $m$ variable. This is essentially the Wigner distribution related to the auto-correlation, in the limit $h\to0$.  Since $\beta(x, s(k+m)h,-m) =\beta(x, skh,m) $, we must have $\beta(x,m)=\beta(x,-m)$ for all $(x,m)$. Then \r{beta} is just a cosine series, and in particular real. The theorem above shows that it is in fact non-negative. 

The generalization of Theorem~\ref{thm_m} to this case is the following.

\begin{theorem}\label{thm_m2} 
Assume  that $\{ \bm f_{k,h}; \,  k\in \mathbf{Z}^n \}$ is a noise satisfying Hypothesis \ref{hyp:noise_g}, with $L^{4}$ moments only.  
 Let $f_h$ be the associated function 
given by \r{2.4L} with some fixed $s>0$ and with $\hat\chi\in C_0^\infty$ not necessarily satisfying \r{2.5}.  
Then  \r{de6a} remians true with 
\be{de6b_g}
\d\mu_f(x,\xi) = \frac{s^n}{(2\pi)^n}  \check\beta(x,s\xi) |\hat\chi(s\xi)|^2\,\d x\,\d\xi. 
\ee
\end{theorem}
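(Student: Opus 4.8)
The plan is to follow the four-step structure of the proof of Theorem~\ref{thm_m} verbatim, keeping track of the single essential change: the second moments $\mathbb{E}(\bm f_k\bm f_l)$ are no longer $\sigma^2\delta_{kl}$ but $\beta(skh, l-k)$, a quantity that is (i) nonzero for $l-k$ in a fixed bounded set, and (ii) slowly varying in the base point $skh$. As before I would reduce to the case $\hat\chi$ supported in $(-\pi,\pi)^n$ (Step~4 being a routine $m$-fold refinement exactly as written there), and write $(p^{\rm w}(x,hD)f_h,f_h) = \sum_{k,l\in K(h)^2} p_{kl}\bm f_k\bm f_l$ with $p_{kl}$ as in \r{pkl} and the representation \r{de10} of $p_{kl}$ in hand. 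Taking expectations first,
\[
\mathbb{E}(p^{\rm w}(x,hD)f_h,f_h) = \sum_{k,l\in K(h)^2} p_{kl}\,\beta(skh,l-k).
\]
Grouping by the offset $m=l-k$, this is $\sum_m \sum_k p_{k,k+m}\,\beta(skh,m)$, and the inner sum, by \r{de10} and a Riemann-sum/stationary-phase computation analogous to the one giving \r{de8a}, converges to $\tfrac{s^n}{(2\pi)^n}\int e^{\i s m\cdot\xi} p(x,\xi)\,\d x\,\d\xi$ up to $O(h)$; summing the resulting series over $m$ against $\beta(x,m)$ and using the definition \r{beta} of $\check\beta$ yields exactly $\int p(x,\xi)\,\d\mu_f$ with $\d\mu_f$ as in \r{de6b_g}. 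So the deterministic/expected part is settled by a direct generalization of Lemma~\ref{lemma1}.

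The remaining work is to show the fluctuation $(p^{\rm w}(x,hD)f_h,f_h) - \mathbb{E}(p^{\rm w}(x,hD)f_h,f_h)$ tends to $0$ in $L^2(\mathbb{X})$. I would expand its second moment as a quadruple sum $\sum_{k,l,k',l'} p_{kl}\overline{p_{k'l'}}\,\mathrm{Cov}(\bm f_k\bm f_l,\,\bm f_{k'}\bm f_{l'})$. The fourth-moment/$\log$ hypothesis \r{H1}, applied via Cauchy--Schwarz, gives a uniform bound on these fourth cumulants, and the key structural point is that $\mathrm{Cov}(\bm f_k\bm f_l,\bm f_{k'}\bm f_{l'})$ vanishes unless the two pairs $\{k,l\}$, $\{k',l'\}$ are "close" — more precisely, unless the index sets $\{k,l\}$ and $\{k',l'\}$ overlap or lie within the fixed correlation radius $R$ of $\beta$. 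This is not quite automatic, since correlated (non-independent) variables do not give outright vanishing of covariances of products; but $\beta$ having bounded support means that $\mathrm{Cov}(\bm f_k\bm f_l,\bm f_{k'}\bm f_{l'})=0$ once all four pairwise distances among $k,l,k',l'$ exceed $R$ (the relevant joint cumulants then factor through an independent block). Hence the effective number of nonzero terms is $O\big((\mathrm{Card}\,K(h))^2\big) = O(h^{-2n})$ rather than $h^{-4n}$, and with $|p_{kl}|\le C(sh)^n$ from \r{de8a0} we get $\mathbb{E}\big(|\text{fluctuation}|^2\big) \le C (sh)^{2n}\cdot C' h^{-2n} = O(h^n)\to 0$. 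One then combines this with the expectation computation to get \r{de6a}, and the mean-square convergence is established.

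The main obstacle is the combinatorial/covariance bookkeeping in the second step: one must carefully classify which of the four indices coincide and which merely lie within distance $R$, and verify in each configuration that either the covariance genuinely vanishes (enough separated indices, using bounded support of $\beta$ and the factorization of cumulants over independent blocks — this requires spelling out what "created by a noise process with the stated autocorrelation" entails at the level of higher joint moments) or else the configuration is low-dimensional enough that the crude bound $|p_{kl}|\le C(sh)^n$ together with $\mathrm{Card}\,K(h)=O(h^{-n})$ suffices. A secondary technical point is handling the slow $x$-dependence of $\beta$ when passing from $\sum_k p_{k,k+m}\beta(skh,m)$ to the integral: since $\beta(\cdot,m)$ is smooth and $p_{k,k+m}$ localizes $x$ near $skh$ at scale $sh$, replacing $\beta(skh,m)$ by $\beta(x,m)$ inside the integral costs only $O(h)$, exactly as in Remark~\ref{rem_std}(f) and the discussion of section~\ref{sec_7.1}; this is routine but should be stated. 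Everything else — the reduction to $\hat\chi$ supported in the unit box, the trace and Hilbert--Schmidt identities, the change of variables in \r{de10} — carries over from Theorem~\ref{thm_m} and Lemma~\ref{lemma1} without change.
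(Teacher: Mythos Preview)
Your overall strategy matches the paper's: group by offset $m=l-k$, compute the $m$-shifted trace to identify the deterministic limit $\int \check\beta(x,s\xi)|\hat\chi(s\xi)|^2 p\,\d x\,\d\xi$, and then bound the fluctuation in $L^2(\mathbb X)$. The paper packages the deterministic computation slightly differently, via the operator identity $Q\phi_{k+m}=q_m(x,hD)\phi_k$ with $q_m(x,\xi)=e^{\i sm\cdot\xi}q(x+shm,\xi)$, which reduces each fixed-$m$ sum directly to the trace formula \r{de8a} for the modified symbol; your Riemann-sum description reaches the same place.

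There is, however, a genuine gap in your fluctuation estimate. Your arithmetic ``$C(sh)^{2n}\cdot C'h^{-2n}=O(h^n)$'' is simply wrong: $(sh)^{2n}\cdot h^{-2n}=s^{2n}=O(1)$. And the crude bound $|p_{kl}|\le C(sh)^n$ is in fact not strong enough. To see why, note that among the quadruples $(k,l,k',l')$ with nonzero $\mathrm{Cov}(\bm f_k\bm f_l,\bm f_{k'}\bm f_{l'})$ there is the configuration where $|k-k'|\le M$ and $|l-l'|\le M$ but $|k-l|$ is large; there are $O(h^{-2n})$ such quadruples (your count is right), the covariance is of order one, and with only $|p_{kl}p_{k'l'}|\le C(sh)^{2n}$ this case contributes $O(1)$, not $o(1)$. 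The remedy is the Hilbert--Schmidt bound \r{de8b}: for fixed offsets $a=k'-k$, $b=l'-l$ with $|a|,|b|\le M$, Cauchy--Schwarz gives
\[
\Big|\sum_{k,l}p_{kl}\,\overline{p_{k+a,l+b}}\Big|\le \sum_{k,l}|p_{kl}|^2=O(h^n),
\]
and summing over the finitely many $(a,b)$ yields the desired $\mathbb E(|\text{fluctuation}|^2)=O(h^n)$. This is exactly what the paper's ``as above'' is pointing to (the $W_2$ estimate in the proof of Theorem~\ref{thm_m} used $\sum|p_{kl}|^2$, not the pointwise bound). Your remark that the higher-moment/independence structure beyond the second-moment hypothesis needs to be spelled out is well taken; the paper is also terse there.
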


\begin{proof}
We follow the proof of Theorem~\ref{thm_m}. We replace the diagonal $\Delta$ in it by $\Delta=\{(k,l);\; |k-l|\le M\}$, where $M$ is so that $\beta(\cdot,m)=0$ for $|m|>M$. The off--$\Delta$ terms do not contribute to the limit \r{de6a} as above. For the rest, we estimate their contribution for every fixed $m$, and then sum up the results. The analog of $W_1$ now, depending on $m$, is
\be{W1_g}
W_1= \sum_{k\in K(h)}p_{k\, k+m}\bm f_k\bm f_{k+m} = \sum_{k\in K(h)} \beta(skh,m)p_{k\, k+m}  + W_{1,0},
\ee
where 
\begin{equation*} 
W_{1,0} = \sum_{k\in K(h)}  (\bm f_k\bm f_{k+m}-\beta(skh,m)) \, p_{k\, k+m} .
\end{equation*}
The analysis of $W_{1,0} $ is similar: the random variables $\bm f_k\bm f_{k+m}-\beta(skh,m)$ have zero expectation, thus $\mathbb{E}(W_{1,0})=0$. They have a uniformly bounded variance. To estimate $\mathbb{E}(W_{1,0}^2)$, notice that only $O(m^2h^{-n})$ terms in the expansion would have a non-zero expectation; and by  \r{de8a0}, $\mathbb{E}(W_{1,0}^2)=O(h^n)$ again. It remains to compute the $\beta$  term in \r{W1_g}. 

Recall the definition \r{pkl} of $p_{kl}$. With $l=k+m$ there, an easy calculation shows that $Q\phi_{k+m} = q_m(x,hD)\phi_k$ for any \HPDO\ $Q=q(x,hD)$, with $q_m(x,\xi)= e^{\i s m\cdot\xi}q(x+shm,\xi)$ (which is a symbol as well, notice that there is no $h$ in the phase). The principal symbol of that is just $e^{\i s m\cdot\xi}q(x,\xi)$. 
Then the $\beta$  term in \r{W1_g} takes the form
\[
(sh)^n \sum_{k\in K(h)} ( \beta(skh,m) \phi_k,q_m(x,hD)\phi_k) .
\]
By the properties of $\phi_k$, recall \r{2.4L}, where $\hat\chi\in C_0^\infty$, replacing $\beta(skh,m)$ above with $\beta(x,m)$ would result in an $O(sh)$ error in each term, and a total error $O(h)$. Considering this done, and moving the $\beta$ factor to the right, we get a quadratic form with $q$ multiplied by $\beta(x,m)$:
\[
(sh)^n \sum_{k\in K(h)} (   \phi_k,\tilde q_m(x,hD)\phi_k),
\]
where $\tilde q_m(x,\xi) = e^{\i s m\cdot\xi}\beta(x,m) q(x,\xi)$. 

So far, $m$ was fixed. Summing over $m$ (the number of those terms is $2M+1$), we get to the situation of the proof of Theorem~\ref{thm_m} with $q$ replaced by
\[
\sum _m e^{\i s m\cdot\xi}\beta(x,m) q(x,\xi)=\check\beta (x, s\xi) q(x,\xi).
\]
The theorem then follows as in the proof of Theorem~\ref{thm_m}.
\end{proof}

\subsection{Spectral density under an FIO} 
We want to find out how a spectral density transforms under an action of a classical FIO of order $m$. It is easier to answer this question for \sc\ FIOs since the defect measures are a \sc\ object, and we will reduce the classical case to the \sc\ one.

\begin{theorem}\label{thm_dmu} 
Let $A$ be a classical FIO of order $m$ on $\R^n$ with a homogeneous principal symbol associated with a canonical relation which is a graph of a local diffeomorphism $\kappa$. Let $f=f_h$ be \sc ly band limited and uniformly bounded in $L^2$. 
Then for every defect measure $\d\mu_f$ given as the limit \r{9} for some $h=h_j\to0$, the defect measure $\d\mu_{h^m Af}$ associated to the same sequence $h_j$ exists as well and it satisfies
\[
\d\mu_{h^m Af}=  {\kappa^{-1}}^*(b\,\d\mu_f) \quad \text{on $T^*\R^n\setminus 0$},
\]
where $b$ is the (classical) principal symbol of $A^*A$.
\end{theorem}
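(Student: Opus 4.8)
\medskip
\noindent\textbf{Proof proposal.} The plan is to rescale $A$ to a semiclassical operator, apply the semiclassical Egorov theorem together with Theorem~\ref{thm_c-sc}, and then read off the transformation of $\d\mu_f$ directly from its defining formula \r{9}. Fix a subsequence $h=h_j$ along which $\d\mu_f$ in \r{9} exists. After a microlocal partition of unity we may assume $A$ is globally of the form \r{A00} plus a smoothing operator; since $f$ is \sc ly band limited and uniformly $L^2$-bounded, the image of the smoothing part, multiplied by $h^m$, is a tempered family with \sc\ wave front set in the zero section, while the image of the oscillatory part, multiplied by $h^m$, is uniformly $L^2$-bounded (its symbol, restricted to the compact frequency support of $f$, is $O(1)$ after the factor $h^m$). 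Consequently $\varphi^{\rm w}(x,hD)\,h^mAf$ is uniformly $L^2$-bounded for every real $\varphi\in C_0^\infty(T^*\R^n\setminus 0)$, and, using that $\varphi^{\rm w}(x,hD)$ is self-adjoint and that the smoothing remainder contributes $O(h^\infty)$ to every such pairing,
\be{pp1}
\big(\varphi^{\rm w}(x,hD)\,h^mAf,\,h^mAf\big)=\big((h^mA)^*\varphi^{\rm w}(x,hD)(h^mA)f,\,f\big)+O(h^\infty),
\ee
where from now on $A$ denotes the oscillatory part \r{A00}; it suffices to identify the limit of the right-hand side of \r{pp1} along $h_j$.

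\smallskip
The second step is to localize away from the zero section. With $\psi$ as in Theorem~\ref{thm_c-sc}, choose $\eps>0$ so small that $\supp\varphi$ and $\kappa^{-1}(\supp\varphi)$ lie in $\{|\eta|>2C\eps\}$, where $C$ is the constant of Theorem~\ref{thm_c-sc}(b), and split $f=f_1+f_2$ with $f_2=\psi(hD/\eps)f$, $f_1=(\Id-\psi(hD/\eps))f$. Then $\WFH(f_2)$, and by Theorem~\ref{thm_c-sc}(b) also $\WFH(h^mAf_2)=\WFH(R_{h,\eps}f)$ with $R_{h,\eps}$ as in \r{AR}, lie in an $O(\eps)$-neighbourhood of the zero section, whereas $\WFH(f_1)$ avoids a neighbourhood of it, so $h^mAf_1=h^mA_{h,\eps}f_1+O_{\mathcal{S}}(h^\infty)$ by Theorem~\ref{thm_c-sc}(a). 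Since $\supp\varphi$ is disjoint from that $O(\eps)$-neighbourhood, every term in the right-hand side of \r{pp1} that contains a factor $h^mAf_2$ — including the cross terms, handled by moving $\varphi^{\rm w}(x,hD)$ to the other side — is $O(h^\infty)$, so it equals $\big((h^mA_{h,\eps})^*\varphi^{\rm w}(x,hD)(h^mA_{h,\eps})f_1,\,f_1\big)+O(h^\infty)$.

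\smallskip
The third step is Egorov. By Theorem~\ref{thm_c-sc}(a), $h^mA_{h,\eps}$ is an $h$-FIO quantizing $\kappa$ whose principal symbol, on the support $\{|\eta|\gtrsim\eps\}$ of its full symbol, coincides with the classical principal symbol of $A$; hence $B_{h,\eps}:=(h^mA_{h,\eps})^*\varphi^{\rm w}(x,hD)(h^mA_{h,\eps})$ is an \HPDO\ with principal symbol $c(y,\eta):=b_\eps(y,\eta)\,\varphi(\kappa(y,\eta))$, where $b_\eps$ is the principal symbol of $(h^mA_{h,\eps})^*(h^mA_{h,\eps})=(\Id-\psi(hD/\eps))\,h^{2m}A^*A\,(\Id-\psi(hD/\eps))$, namely $b_\eps=(1-\psi(\eta/\eps))^2\,b(y,\eta)$. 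On $\supp(\varphi\circ\kappa)=\kappa^{-1}(\supp\varphi)\subset\{|\eta|>2C\eps\}$ the factor $(1-\psi(\eta/\eps))^2$ equals $1$, so there $c=b\cdot(\varphi\circ\kappa)$ with $b$ the classical principal symbol of $A^*A$; being compactly supported away from the zero section, $c\in C_0^\infty(T^*\R^n)$. Thus $B_{h,\eps}=c^{\rm w}(x,hD)+O(h)_{L^2\to L^2}$, and replacing $f_1$ by $f-f_2$ once more (the $f_2$-terms being $O(h^\infty)$ since $\supp c$ misses $\WFH(f_2)$) the right-hand side of \r{pp1} equals $\big(c^{\rm w}(x,hD)f,f\big)+O(h)$. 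By \r{9}, along $h_j$ this converges to
\be{pp2}
\int c(y,\eta)\,\d\mu_f(y,\eta)=\int b(y,\eta)\,\varphi(\kappa(y,\eta))\,\d\mu_f(y,\eta)=\int\varphi(x,\xi)\,\d\big({\kappa^{-1}}^*(b\,\d\mu_f)\big)(x,\xi),
\ee
the last equality being the change of variables $(x,\xi)=\kappa(y,\eta)$. Since $\varphi\in C_0^\infty(T^*\R^n\setminus 0)$ was arbitrary and the corresponding limits exist along the given $h_j$, this determines the restriction to $T^*\R^n\setminus 0$ of any defect measure of $h^mAf$ to be ${\kappa^{-1}}^*(b\,\d\mu_f)$, any residual non-uniqueness being confined to the zero section, which the statement excludes.

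\smallskip
The step I expect to demand the most care is the treatment of the zero section: $b$ is homogeneous of degree $2m$, hence singular (or vanishing to high order) at $\eta=0$, whereas semiclassically the zero section is a genuine part of phase space on which $\d\mu_f$ may carry mass. The splitting $f=f_1+f_2$ with $f_2$ of frequency $\lesssim\eps$ is exactly what resolves this: Theorem~\ref{thm_c-sc}(b) confines $h^mAf_2$ to an $O(\eps)$-neighbourhood of the zero section, and the a priori choice of $\eps$ smaller than the distance from $\kappa^{-1}(\supp\varphi)$ to the zero section makes all $f_2$-contributions negligible. One should verify that these reductions are uniform in $h$ (they are, the errors being genuine $O(h^\infty)$) and that $\eps$ may be chosen separately for each fixed $\varphi$, which is all that is needed. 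The remaining ingredients — uniform $L^2$-boundedness of $h^mA$ on \sc ly band limited families, the microlocal partition of unity reducing $A$ to \r{A00}, and the composition/Egorov calculus for the $h$-FIOs furnished by Theorem~\ref{thm_c-sc} — are standard.
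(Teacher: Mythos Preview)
Your proposal is correct and follows essentially the same route as the paper: both reduce to the semiclassical Egorov theorem after invoking Theorem~\ref{thm_c-sc} to excise the zero section, and both then read off the answer by a change of variables $(x,\xi)=\kappa(y,\eta)$. Your splitting $f=f_1+f_2$ with $f_2=\psi(hD/\eps)f$ is exactly the operator splitting $A=A_{h,\eps}+R_{h,\eps}$ of \r{AR} applied to $f$ (you note this yourself via $Af_2=R_{h,\eps}f$), so the two arguments are really the same up to bookkeeping; your version is a bit more explicit about reinstating $f$ from $f_1$ at the end, while the paper is more explicit about the symbol computation for $A_{h,\eps}^*A_{h,\eps}$ and about summing over branches when $\kappa$ is only a local diffeomorphism.
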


\begin{proof}
By \r{9}, 
\be{10}
\begin{split}
\int  p(x,\xi)\,\d\mu_{h^mAf} &=  \lim_{h=h_j\to 0}\left( p(x,hD)h^m Af_h, h^m Af_h \right)_{L^2} \\
&=  \lim_{h=h_j\to 0}\left(h^m A^* p(x,hD)h^m Af_h,f_h \right)_{L^2} .
\end{split}
\ee 
Since we need to find $\d\mu_{h^mAf}$ away from the zero section, it is enough to assume that $p=0$ near $\xi=0$. 

If for a moment we ignore the need to cut near $\xi=0$, then we can think of $A$ as in \r{A00} as an  $h$-FIO with symbol $a(x,\xi/h)=h^{-m}a(x,\xi)$ for $|\xi|\gg1$. Then by the \sc\ Egorov's theorem \cite[Theorem~5.5.5]{Martinez_book}, which an analog of the classical one, 
 (Theorem~25.3.5 in \cite{Hormander4}), we would get
\be{11}
\left(h^m A^* p(x,hD)h^m Af_h, f_h \right)_{L^2} = \left(Q  f_h,f_h \right)_{L^2} ,
\ee
where $Q$ is an \HPDO\ with a principal symbol $b(p\circ \kappa) $, where $b$ is the (classical) principal symbol of $A^*A$ and $\kappa$ is the canonical relation (as a map) of $A$. Note that the canonical relations of $A$ and its \sc\ version after the change $\xi\mapsto\xi/h$ are the same.

To deal with the fact that we have a classical FIO and a \sc\ \PDO, we apply Theorem~\ref{thm_c-sc}. Let $A=A_{h,\eps}+ R_{h,\eps}$ be as in \r{AR}. 
For $\eps\ll1$, the remainder $R_{h,\eps}$ would contribute an $O(h^\infty)$ error to \r{10} if we replace $A$ there by $A_{h,\eps}$ because $p=0$ near $\xi=0$. Therefore, we can consider this done. Then $A_{h,\eps}$ is an $h$-FIO, see \r{AR} with symbol $\tilde a:=a(x,\eta/h) (1-\psi(\eta/\eps))\in h^{-m}S^0$ supported where $\eta\ge\eps$. On the support, $|\eta|/h\ge\eps/h$, and there, $a(x,\eta/h)$ is homogeneous for $h\ll1$; therefore $\tilde a=h^{-m}a(x,\eta) (1-\psi(\eta/\eps))$

Then we can apply the \sc\ version of Egorov's theorem \cite[Theorem~5.5.5]{Martinez_book}. For that, we need to compare the principal symbol of the \HPDO\ $A_{h,\eps}^*A_{h,\eps}$ to that of the classical \PDO\ $A^*A$ and see how the cutoff $(1-\psi(\eta/\eps))$ near the zero section affects that.

The principal symbol of $A_{h,\eps}^*A_{h,\eps}$ is given by
\[
c(x,\xi,h) = \left|\tilde a(\pi_1\circ\kappa(x,\xi)), \xi,h) \right|^2 J(x,\xi),
\]
where $\pi_1$ is the projection on the fist variable, and $J>0$ is a smooth Jacobian, homogeneous of order zero w.r.t.\ $\xi$,  depending on the phase function only. For $|\xi|>2\eps$ we have $\tilde a=  h^{-m}a(x,\eta)$; therefore  
\[
c(x,\xi,h) = h^{-2m}\left| a(\pi_1\circ\kappa(x,\xi)), \xi) \right|^2 J(x,\xi),\quad |\xi|\ge2\eps. 
\]
This is the principal symbol of $A^*A$ as a classical \PDO\ as well without the factor $h^{-2m}$. 
Therefore, the limit of \r{11}, as $h=h_j\to0$, would be
\[
\int  b(p\circ \kappa)  (x,\xi)\, \d\mu_{f}
\]
as long as $p=0$ for $|\xi|\le 2\eps$. 
Make the change of variables $\kappa(x,\xi)=(y,\eta)$, and using the fact that $\kappa$ is symplectic, in particular an isometry, we would get
\[
\begin{split}
\int  p(x,\xi)\,\d\mu_{h^m Af} = \int  p(x,\xi)   {\kappa^{-1}}^*(b\d\mu_{f}),
\end{split}
\]
when $p=0$ for $|\xi|\le 2\eps$, 
where ${\kappa^{-1}}^*$ is the pull-back under $\kappa^{-1}$. Since  $\eps>0$ is arbitrary, this holds when $0\not\in \supp_\xi p$. Then 
\be{12aa}
\d\mu_{h^m Af}=  {\kappa^{-1}}^*(b\d\mu_f).  
\ee
So far $A$ was microlocalized near pair of points, where $\kappa$ is a (global) diffeomorphism. Since it is only a local one, we can do the same for each branch, and add the results. Then $b$ would be the principal symbol of $A^*A$ with all branches combined, as stated. 
\end{proof}

Note that in particular, if \r{9aa} holds, then ${\kappa^{-1}}^*(\gamma_f \,\d x\,\d\xi) = \gamma_g\circ \kappa^{-1} \,\d y\,\d\eta$. 

\begin{remark}\label{remark_mu}
The proof also implies that if $Q=q(x,hD)$ and $R=r(x,hD)$ are \HPDO s, then 
\be{Rem4.2} 
\d\mu_{h^m QARf}=  |q|^2{\kappa^{-1}}^*(b|r|^2\,\d\mu_f) \quad \text{on $T^*\R^n\setminus 0$},
\ee
{where $b$ still denotes the principal symbol of $A^*A$.}
\end{remark}

\begin{example}
Take $R=r(x)$  (i.e., a multiplication) with $r$ smooth. Then, up to $O(h)$, equality \r{2.4L} for $rf_h$ takes a similar form but now $\bm f_{k,h}$ are replaced by   $r(shk)\bm f_{k,h}$. This is an example of non-homogeneous noise, depending on the position, for which Theorem~\ref{thm_m} applies but then the measure is as in \r{Rem4.2}.  
\end{example}

\begin{example}
Let $R$ be a convolution with $h^{-n} \psi(x/h)$ with some $\psi\in C_0^\infty$. This is an \HPDO\ with symbol $\check \psi(\xi)$, therefore we get the factor $|r|^2=|\check \psi(\xi)|^2$ in \r{Rem4.2}.  An elementary computation shows that, up to $O(h)$, $Rf_h$ is obtained from $\tilde{\bm f}_{k,h} = \sum_m\psi(s(k-m)) {\bm f}_{m,h}$. Those are correlated (in general) random variables. They model sensors with cross-talk. Then Theorem~\ref{thm_m} applies with the measure is as in \r{Rem4.2}.  
\end{example}

Both examples are covered by Theorem~\ref{thm_m2} as well if you think of $Rf$ as $f$ but generated by correlated noise $\bm f_{k,h}$.  

\subsection{Back to the inverse problem} 
We return to the inverse problem \r{1} now. Let  $A$ be an FIO as in Theorem~\ref{thm_dmu}, and elliptic.  More precisely, let $\Omega\subset\R^n$ be a bounded domain, and let $\Omega'$ be another such domain so that the canonical relation $\kappa$ of $A$ maps $T^*\Omega$ into $T^*\Omega'$. By a compactness argument, if $A$ is defined first as $A:\mathcal{E}'(\Omega)\to \mathcal{E}'(\R^n)$, then the range of $\kappa$ projected to its base variable is a bounded set, thus such an $\Omega'$ exists. Outside $\bar\Omega'$, the image of $A$ is smooth. The measurement $g$, supposedly equal to $Af$ for some $\mathcal{E}'(\Omega)$ but corrupted by noise, is a function defined in $\Omega'$. 
 Then \r{1} is microlocally solvable: $f=A^{-1}g$ (we do not have problems with $g$ not being in the range because $A^{-1}$ is a parametrix) and we are in the situation above with $A$ replaced by $A^{-1}$.  The added noise is given by \r{5}. Dropping the subscript ``noise'' as we already did, we assume that $g$ is given first as discrete noise $\{\bm g_k\}$ and then converted to a \sc ly band limited function $g$ as in \r{2.4L}. 
Then
\[
A^{-1}g= \sum_{shk\in\Omega'} \bm g_k A^{-1}\chi_k.
\]
We have not defined what noise is but we can think of this as noise because it is a linear combination of $\{A^{-1}\chi_k\}$ with random coefficients. It has zero mean in the sense of \r{DM1}. Then
\be{13}
\d\mu_{h^{-m} A^{-1}g}=  \kappa^*(b^{-1}\d\mu_{g}) \quad \text{on $T^*\R^n\setminus 0$},
\ee
where $\kappa$ is the canonical relation of $A$ and $b$ is the principal symbol of $AA^*$. By Egorov's theorem again applied to the operator $A^* (AA^*)A=(A^*A)^2$, the principal symbol of it is that of $A^*A$ multiplied by $b\circ \kappa$. Therefore, $b\circ \kappa$ is the principal symbol of $A^*A$. 

The defect measure \r{13} then describes the power spectrum of the noise in the reconstruction \textit{away from the zero section $\xi=0$}. We cannot expect to get an estimate near the zero section in this case since $A$ may not be even injective. 
For example, the interior region of interest problem for the Radon transform in the plane has no unique solution and the practical solution is a parametrix. Then every element in the kernel would be smooth and could be considered as noise with zero frequency.  

Next theorem is a direct consequence of \r{13}. The operator $Q$ is needed to cut the  zero section, and $R$ is a filter which we may want to apply to the data, see also next section. Below, $\sigma_p(Q)$ stands for the principal symbol of $Q$. 
\begin{theorem}\label{thm_IP}
Let $A$ be as above, and elliptic, and let $g=g_h$ be \sc ly band limited with $\WFH(g)\subset T^*\Omega'$,  uniformly bounded in $L^2(\Omega')$. 
If $R=r(x,hD)$ is any \HPDO\ in $\Omega'$ with an $h$-independent symbol, and if $Q=q(x,hD)$  is a similar \HPDO\ in $\Omega$ with $q=0$  near the zero section, then 
\be{thm_IP1}
\begin{split}
\VAR^0_\Omega(Qh^{-m}A^{-1}Rg)&= \frac1{|\Omega|} \int_{T^*\Omega} |q|^2 \sigma_p(A^*A)^{-1} \kappa^*\left(|r|^2 \,\d\mu_g\right)\\
& = \frac1{|\Omega|} \int_{T^*\Omega'}  |q\circ \kappa^{-1}|^2    \sigma_p(AA^*)^{-1} |r|^2\, \d\mu_g 
\end{split}
\ee
for every $g$ (called there $f$) as in Theorem~\ref{thm_dmu}. 
\end{theorem}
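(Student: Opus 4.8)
The plan is to deduce Theorem~\ref{thm_IP} directly from the transformation rule \r{13} for the defect measure under $A^{-1}$, combined with Remark~\ref{remark_mu} and the definitions \r{VAR}, \r{9b}. First I would set $F:=Q h^{-m}A^{-1}R g$ and note that, since $q=0$ near the zero section, $F$ is obtained from $g$ by applying the composition of an \HPDO\ $Q$, the FIO $h^{-m}A^{-1}$ (whose canonical relation is $\kappa$, the inverse of that of $A$), and the \HPDO\ $R=r(x,hD)$. Applying \r{Rem4.2} of Remark~\ref{remark_mu}, with the roles adapted so that $A$ there is $A^{-1}$ here, gives
\[
\d\mu_F = |q|^2\, \kappa^*\!\left( \tilde b\, |r|^2\, \d\mu_g \right)\quad\text{on $T^*\R^n\setminus 0$},
\]
where $\tilde b$ is the principal symbol of $(A^{-1})^*(A^{-1})$. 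The factor $|q|^2$ sits outside the pull-back because $Q$ is applied last, on the $\Omega$ side, while $R$ is applied first, on the $\Omega'$ side, and hence $|r|^2$ is pulled back together with $\d\mu_g$.

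Next I would identify $\tilde b$ with $\sigma_p(A^*A)^{-1}$. This is exactly the computation already carried out in the paragraph following \r{13}: by Egorov applied to $A^*(AA^*)A=(A^*A)^2$ one sees that $b\circ\kappa$ is the principal symbol of $A^*A$, where $b=\sigma_p(AA^*)$; equivalently, the principal symbol of $(A^{-1})^*A^{-1}$ is $\sigma_p(A^*A)^{-1}$ (on the $\Omega$ side) and, pulled over to the $\Omega'$ side, $\sigma_p(AA^*)^{-1}$. I would simply invoke this rather than redo it. Substituting into the displayed formula yields
\[
\d\mu_F = |q|^2\, \sigma_p(A^*A)^{-1}\, \kappa^*\!\left( |r|^2\, \d\mu_g \right).
\]

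Finally I would integrate. By the definition \r{VAR} of $\VAR^0_\Omega$,
\[
\VAR^0_\Omega(F) = \frac1{|\Omega|}\iint_{T^*\Omega}\d\mu_F
= \frac1{|\Omega|}\int_{T^*\Omega} |q|^2\, \sigma_p(A^*A)^{-1}\, \kappa^*\!\left( |r|^2\,\d\mu_g\right),
\]
which is the first line of \r{thm_IP1}. For the second line I would change variables by $\kappa$: since $\kappa:T^*\Omega\to T^*\Omega'$ is a diffeomorphism and symplectic (hence volume preserving, and more to the point $\kappa^*$ of a measure is just its push-forward composed appropriately), the integral of $\kappa^*(\cdot)$ over $T^*\Omega$ equals the integral of $(\cdot)$ over $T^*\Omega'$, with $|q|^2$ and $\sigma_p(A^*A)^{-1}$ becoming $|q\circ\kappa^{-1}|^2$ and $\sigma_p(A^*A)^{-1}\circ\kappa^{-1}=\sigma_p(AA^*)^{-1}$ respectively. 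This gives the second line of \r{thm_IP1} and completes the proof. The one point requiring a little care — and the only place I expect any friction — is bookkeeping the zero section: \r{13} and \r{Rem4.2} hold only on $T^*\R^n\setminus 0$, so I must use the hypothesis that $q$ vanishes near $\xi=0$ to ensure the integrand vanishes there and the identity extends over all of $T^*\Omega$; this is exactly why the cutoff $Q$ is built into the statement.
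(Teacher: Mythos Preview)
Your proposal is correct and follows essentially the same route as the paper's proof: apply Remark~\ref{remark_mu} (with $A$ replaced by $A^{-1}$ and the identification of the principal symbol via the Egorov argument after \r{13}), invoke the definition \r{VAR}, and then change variables using that $\kappa$ is symplectic. Your added remark about the role of $q=0$ near the zero section is a useful clarification but not a departure from the paper's argument.
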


\begin{proof}
By Remark~\ref{remark_mu} about Theorem~\ref{thm_dmu} and \r{VAR}, 
\[
\VAR^0_\Omega(Qh^{-m}A^{-1}Rg)= \frac1{|\Omega|} \int_{T^*\Omega}  |q|^2 \kappa^*\left(   b^{-1}|r|^2 \d\mu_g\right).
\]
Make the change of variables $(y,\eta)=\kappa(x,\xi)$, where $(y,\eta)$ are the variables in the phase space of $g$, using the fact that $\kappa$ is symplectic, and therefore an isometry, to get the second equality of the theorem.
\end{proof}

A typical use of this theorem is to take $q$ to cut off smoothly a small neighborhood of the zero section. Then, for $g$ being white noise, for example, the effect of that on the r.h.s.\ would be small. Then if we formally take $q=1$, hence $Q=\Id$, we get a good approximation of the variance of the noise in the reconstruction away from the zero frequency noise, by Theorem~\ref{thm_m}. The operator $R$ plays a role of a filter before the inversion.

We want to emphasize that $g$ in Theorem~\ref{thm_IP} does not need to be white noise; we just need a well-defined $\d\mu_g$, which is the case for noise satisfying Hypothesis~\ref{hyp:noise_g},  by Theorem~\ref{thm_m2}. 

\begin{remark}\label{rem_R}
In some situations, like in the next two sections, the requirement $q=0$ near the zero section can be removed, and the whole operator $Q$ can be removed (replaced by $\Id$). Assume that the filter $r$ is compactly supported in the dual variable. Since we deal with \sc ly band limited $g$, we can always assume that. Assume that $  \sigma_p(A^*A)^{-1}\kappa^*\d\mu_g$ is absolutely continuous near the zero section. In the case of the Radon transform in parallel geometry in the next section, for example, with $g$ being white noise, that measure is $C|\xi|\d x\,\d\xi$, so this assumption is satisfied. Then the first integral in \r{thm_IP1} has a limit when $q$ (a priori vanishing near $\xi=0$) tends to $1$, and that limit is given by the same formula with $q=1$. Then the l.h.s.\ has the same limit, too,, because we just defined it by that equality, see \r{VAR}. A similar remark applies to the second integral. 
\end{remark}

\section{The Radon transform in ``parallel geometry''}\label{sec_R_p}
We apply the theory to the Radon transform now. We study the parallel geometry parameterization first, where each (directed) line is parameterized by its signed distance $p$ to the origin $p$ and its normal $\omega$, see \r{7}. 
For 
\be{3.0a}
\omega(\varphi)=(\cos\varphi, \sin\varphi),
\ee
we choose the natural measures $\d\varphi$; and the standard measure $\d p$  for $p$. Based on that, we a define the microlocal defect measure $\d\mu_g(\varphi,p,\hat\varphi, \hat p)$ of $g=g_h(\varphi,p)$. If we restrict $p$ to  $|p|\le R$, corresponding to Radon transforms of functions supported in $B(0,R)$, since $\varphi$ naturally belongs to $|\varphi|\le\pi$ (modulo $2\pi$) (call that $\Omega$), then
\be{14}
\VAR^0_\Omega(g)= \frac1{4\pi R}\int \int_\Omega \d\mu_g(\varphi,p,\hat\varphi, \hat p).
\ee
The Radon transform is an FIO of order $-1/2$ with canonical relation  $\kappa=\kappa_+\cup \kappa_-$, where
\[
\kappa_\pm:  (x, \xi)\longmapsto \bigg(\underbrace{\arg (\pm\xi)}_\varphi, \underbrace{\pm x\cdot\xi/|\xi|}_p, \underbrace{- x\cdot\xi^\perp }_{\hat\varphi},\underbrace{\pm |\xi|}_{\hat p}\bigg).
\]
The ranges of $\kappa_\pm$ intersect in the zero section only, and in particular, $\pm \hat p\ge0$ on the range of $\kappa_\pm$. Next, each branch is a local  diffeomorphism. Indeed, $(x,\xi) = \kappa_\pm^{-1}(\varphi,p,\hat\varphi,\hat p)$ is given by
\[
 x= p\omega(\varphi) - (\hat\varphi/\hat p)     \omega^\perp(\varphi), \quad   \xi= \hat p\omega(\varphi). 
\]
It is well defined for $\hat p\not=0$ but if we want $x$ in the image to be in $|x|<R$, we need to require $p^2+(\hat\varphi/ \hat p)^2<R^2$; therefore $\kappa_\pm^{-1}$ are well defined away from the zero section. Then $\mathcal{R}^{-1}$ is associated with $\kappa^{-1}$, which is a local diffeomorphism as well. What prevents it from being global is that it is 2-to-1, i.e., and in particular, it is not injective.

\subsection{The unfiltered inversion} 
The symbol of $\mathcal{RR}^*$ is $b= 4\pi |\hat p|^{-1}$, where $\hat p$ is the dual of $p$. Applying the canonical relation, we get $b\circ \kappa= 4\pi/ |\xi|$. We could have obtained this as  the principal (and full) symbol $4\pi/|\xi|$ of $\mathcal{R}^*\mathcal{R}$. Therefore, by \r{13}, 
\be{3.1}
\d\mu_{h^{1/2}\mathcal{R}^{-1}g}(x,\xi)= \frac{|\xi|}{4\pi}\kappa^* \d\mu_g \quad \text{on $T^*\R^2\setminus 0$}.  
\ee
The fact that $\kappa$ is 1-to-2 presents some subtlety here, already accounted for in the proof of Theorem~\ref{thm_dmu}. Microlocally, one can express $\mathcal{R}$ as $\mathcal{R}= \mathcal{R}_++ \mathcal{R}_-$; then each $\mathcal{R}_\pm$ has normal operator $\mathcal{R}_\pm^*\mathcal{R}_\pm$ with principal symbols one half of that $\mathcal{R}^*\mathcal{R}$; then we apply \r{13}, and the combined result would be still the principal symbol of $\mathcal{R}^*\mathcal{R}$.

Let us say that we have $f$ supported in $B(0,R)$ with a certain \sc\ band limit $B\ge |\xi|$. We take its Radon transform $\mathcal{R} f$. Here, $f$ is not discretized, we can think of $\mathcal{R}f$ as the physical X-ray transform. The assumption on the band limit will be satisfied if the X-rays are not really ideal lines but have some thickness. 
Then we sample $\mathcal{R}f$ densely enough to satisfy the Nyquist requirements and add noise to it. The noise will have higher frequencies than those coming from $f$ if $\mathcal{R}f$ is oversampled. When we invert $\mathcal{R}f$, we will get higher frequencies for $f$ as well that do not originally belong to the set where the frequency set of $f$ lies. We can apply a filter, cutting them 
 to $|\xi|\le B$. Note that this is a filter not affecting $f$, that is why we think of those as a unfiltered inversion. One way to do this is to restrict $\hat p$ to $|\hat p|\le B$ before applying $\mathcal{R}'$ in \r{8}. 

More precisely, let $\supp f\subset B(0,R)$ and 
\be{3.3}
\WFH(f)\subset \{(x,\xi);\; |x|\le R, \, |\xi|\le B\}.
\ee
Then the frequency set $\Sigma(\Ra f)$ (the projection of the \sc\ wave front set on the fiber variable) of $\mathcal{R}f$ is the double cone
\be{3.4}
\big\{(\hat\varphi, \hat p);\; |\hat\varphi|\le R|\hat p|,\, |\hat p|\le B \big\},
\ee
included in the box $\mathcal{B}:=\{|\hat\varphi|\le RB,\; |\hat p|\le B\}$, see Figure~\ref{fig_0} and \cite{S-Sampling} for more details.  
The set \r{3.4} is the ``worst scenario case'' over all points $(\varphi,p)$. For $|p|\gg0$, the opening of the cone is much smaller: $|\hat\varphi|\le |\hat p|\sqrt{R^2-p^2}$. We refer to \cite{S-Sampling} and Figure~3 there. This describes the range of $\kappa$. Therefore, some portion of the noise will not propagate back to the reconstructed $f$. 

We assume that we sample $g=\mathcal{R}f$ at a rate  smaller than the Nyquist requirement for the box $\mathcal{B}$. Moreover, we assume an interpolation kernel $\chi$ in \r{2.4L} (with $f$ replaced by $g$) is chosen so that $\hat\chi=1$ in a neighborhood of $\mathcal{B}$. As we explained in the introduction, we assume that the data is (white) noise, since the problem is linear. Then the power spectrum of the noise (more precisely, the Wigner function) converges in mean sense to a defect measure $\d\mu_g$ that is absolutely continuous by  Theorem~\ref{thm_m}, i.e., it has the form $\d\mu_g=\gamma_g\,\d x\,\d\xi$ of the kind \r{9aa} on $\mathcal{B}$, with $\gamma_g$ as in \r{de6b}. 
Then on $\mathcal{B}$, we have $\gamma_g=s^n\sigma^2/(2\pi)^n=:\gamma^\sharp$, and 
\be{3.2}
\gamma_{h^{1/2} \mathcal{R}^{-1}g}(x,\xi)= \frac{|\xi|}{4\pi}\gamma^\sharp\quad \text{for $|\xi|\le B$}.  
\ee
This is ``blue noise''. 
Here and below, all equalities about the statistics of $f$ are in the limit sense of Theorem~\ref{thm_m}, see \r{VAR} and \r{9b}. 
\begin{figure}[ht]
\begin{center}
	\includegraphics[page=1 ]{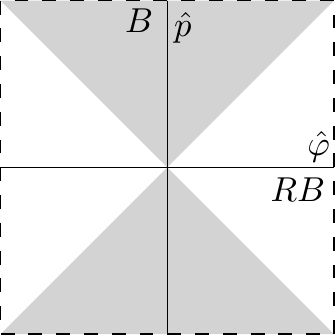}
\end{center}
\caption{\small The frequency set of $\mathcal{R} f$.}
\label{fig_0}
\end{figure}
An important observation is that there is no $x$ dependence in this case. The dependence on $\xi$ is rotationally invariant. This is not the case with the Radon transform in fan-bean coordinates as we will see below.

By \r{14},
\[
\VAR^0_\Omega(g) = \VAR^0_{p,\varphi}(g)= 4B_\varphi B_p\gamma^\sharp, \quad \forall (\varphi, p)\in  S^1\times [-R,R]. 
\]
The two variances are equal because $\gamma_{\mathcal{R}^{-1}g}$ is independent of the position. 

Assume that the sampling rates of $g$ are based on $B_\varphi$ and $B_p$  which take their sharp values not to allow undersampling: $B_p=B$, $B_\varphi=RB$, where $B$ is the band limit of $f$ as in \r{3.3}. Then 
\[
\VAR^0_{p,\varphi}(g) =  4RB^2\gamma^\sharp.
\]
Note that this is actually the sharp lower bound of the variation when the oversampling becomes asymptotically sharp sampling but it is not achievable in our theory; this would require a sinc interpolation while we need  a rapidly decreasing kernel.

For the variance of $f= \mathcal{R}^{-1}g$, we have, see Theorem~\ref{thm_IP} and  Remark~\ref{rem_R},
\be{3.5}
\begin{split}
\VAR^0_x(h^{1/2}\mathcal{R}^{-1}g) &= \int_{  |\xi|\le B } \gamma_{\mathcal{R}^{-1}g}(x,\xi)\, \d\xi\\
&  =  \frac{1}{4\pi}\gamma^\sharp   \int_{|\xi|<B}|\xi|\,\d\xi = \frac{1}{4\pi}\gamma^\sharp  2\pi \int_0^B \rho^2\,\d\rho\\
&=\frac{ B^3\gamma^\sharp}{6}.
\end{split}
\ee
We get the following theorem.

\begin{theorem}[unfiltered inversion]\label{thm1}
Under the assumptions above, in particular assuming that $g$ is white noise, 
and no undersampling, we have 
\be{3.5b}
\STD^0(  \mathcal{R}^{-1}g)= \frac{B^{3/2}}{\sqrt{24 B_\varphi B_ph}}\STD^0(g).
\ee
If $g=\mathcal{R}f$ is sampled sharply, then
\be{3.5c}
\STD^0( \mathcal{R}^{-1}g)= \left(\frac{B}{24 Rh}\right)^{1/2}\STD^0(g).
\ee
\end{theorem}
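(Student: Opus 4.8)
The plan is to assemble the statement from three facts already established before the theorem: the transported power density \r{3.2}, the integrated variance \r{3.5}, and the variance of the data coming from Theorem~\ref{thm_m}. The only genuinely delicate bookkeeping point is the homogeneity weight $h^{1/2}$: since $\mathcal{R}$ has order $-1/2$, the parametrix $\mathcal{R}^{-1}$ has order $1/2$, so the defect measure in Theorem~\ref{thm_dmu} (hence the density in \r{3.2} and the variance in \r{3.5}) is the one attached to $h^{1/2}\mathcal{R}^{-1}g$ rather than to $\mathcal{R}^{-1}g$ itself, and this factor must be unwound at the end.

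First I would record the trivial scaling $\VAR^0(cf)=c^2\,\VAR^0(f)$, which is immediate from \r{9} and the definition \r{VAR} of $\VAR^0$ (multiplying $f_h$ by a constant $c$ multiplies $\d\mu_f$ by $c^2$). Applying this with $c=h^{1/2}$ to \r{3.5} gives $\VAR^0_x(\mathcal{R}^{-1}g)=B^3\gamma^\sharp/(6h)$, and hence
\[
\STD^0_x(\mathcal{R}^{-1}g)=\frac{B^{3/2}}{\sqrt{6h}}\,\sqrt{\gamma^\sharp}.
\]
Because the right-hand side of \r{3.5} is independent of $x$, the quantity $\STD^0(\mathcal{R}^{-1}g)$ is unambiguous and equals $\STD^0_x(\mathcal{R}^{-1}g)$.

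Next I would compute $\STD^0(g)$. Since $g$ is white noise interpolated with a kernel $\chi$ for which $\hat\chi\equiv 1$ on a neighborhood of the box $\mathcal{B}=\{|\hat\varphi|\le B_\varphi,\ |\hat p|\le B_p\}$, Theorem~\ref{thm_m}(b) gives $\d\mu_g=\gamma^\sharp\,\d p\,\d\varphi\,\d\hat p\,\d\hat\varphi$ on $\mathcal{B}$ (in the idealized sharp-sampling limit $|\hat\chi|^2=\mathbf{1}_{\mathcal{B}}$ discussed right after \r{3.5}). Integrating over the fibre as in \r{14}/\r{VAR} yields $\VAR^0_{p,\varphi}(g)=4B_\varphi B_p\gamma^\sharp$, i.e. $\sqrt{\gamma^\sharp}=\STD^0(g)/(2\sqrt{B_\varphi B_p})$. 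Substituting this into the displayed formula for $\STD^0_x(\mathcal{R}^{-1}g)$ gives
\[
\STD^0(\mathcal{R}^{-1}g)=\frac{B^{3/2}}{\sqrt{6h}}\cdot\frac{\STD^0(g)}{2\sqrt{B_\varphi B_p}}=\frac{B^{3/2}}{\sqrt{24\,B_\varphi B_p h}}\,\STD^0(g),
\]
which is \r{3.5b} (using $2\sqrt6=\sqrt{24}$). For \r{3.5c} I would then insert the sharp-sampling values $B_p=B$, $B_\varphi=RB$, so $B_\varphi B_p=RB^2$ and the exponent of $B$ drops from $3/2$ to $1/2$, producing $\STD^0(\mathcal{R}^{-1}g)=(B/24Rh)^{1/2}\STD^0(g)$.

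I do not expect a serious obstacle here: the argument is essentially a change of normalization combining \r{3.2}, \r{3.5}, and the data-variance formula. The two things to be careful about are the $h^{1/2}$ weight (handled by the scaling identity $\VAR^0(cf)=c^2\VAR^0(f)$) and the already-flagged idealization that replaces the rapidly decreasing interpolation kernel by a sinc kernel, so that the data variance is exactly $4B_\varphi B_p\gamma^\sharp$ rather than only asymptotically so as the oversampling becomes sharp.
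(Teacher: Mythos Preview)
Your proposal is correct and follows essentially the same route as the paper: the theorem is simply the combination of the transported density \r{3.2}, the fibre integral \r{3.5}, and the data variance $\VAR^0_{p,\varphi}(g)=4B_\varphi B_p\gamma^\sharp$, together with the scaling that removes the weight $h^{1/2}$. The paper in fact presents exactly these ingredients immediately before the statement and then just says ``We get the following theorem''; your write-up makes the bookkeeping (especially the $h^{1/2}$ factor) more explicit than the paper does, but there is no methodological difference.
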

Recall that we defined $\VAR^0$, see \r{VAR}, and similarly, $\STD^0$, as integral of the defect measure. The implication of this theorem is that when we have $g$ created by a white noise process, then for every $Q=q(hD)$ with $q=0$ near the origin,  $\STD^0(h^{1/2}Q\mathcal{R}^{-1}g)$ converges in mean square sense to a quantity (see \r{3.12}), which itself converges to the r.h.s.\ of \r{3.5b}, respectively \r{3.5c}, when $q\to1$. In other words, the cutoff near $\xi=0$ is removable at the expense of taking a double limit: first $h\to0$, then $q\to1$ (in $L^1$ sense). 

\subsection{The filtered inversion} 
The Radon transform is inverted often with a low-pass filter before applying $\mathcal{R}'$ in \r{8}, i.e., 
\be{3.8}
f= \frac1{4\pi} \mathcal{R}' \nu(D_p) |D_p|g,
\ee
where $\nu$ is an even function decaying away from the origin. Assuming a band limit $B_p$ for the $p$ variable, determined by the sampling rate $s_p$, for example, one popular filter is the Hann filter:
\be{3.8a}
\nu_\textrm{Hann}(\hat p) = \frac12\left(1+\cos\frac{\pi \hat p}{B_p}\right)= \cos^2\frac{\pi \hat p}{2B_p}, \quad |\hat p|\le B_p,
\ee
and $\nu_\textrm{Hann}(\hat p) =0$ otherwise. Another commonly used filter is the cosine one
\[
\nu_\textrm{cosine}(\hat p) = \cos\frac{\pi \hat p}{2B_p}, \quad |\hat p|\le B_p.
\]
They are plotted in Figure~\ref{fig_filters}. 
\begin{figure}[ht]
\begin{center}
	\includegraphics[trim = 0 30 0 30  ,scale=0.3]{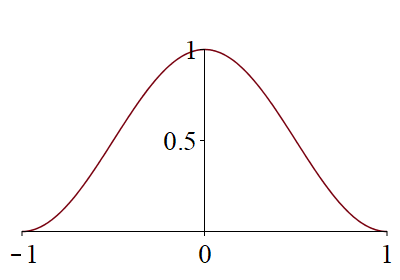}\hspace{25pt}
	\includegraphics[trim = 0 30 0 30  ,scale=0.3]{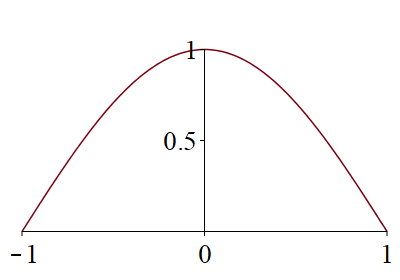}
\end{center}
\caption{\small The Hann and the cosine filters with $B=1$.}
\label{fig_filters}
\end{figure}

There are many other filters (windows) used in signal processing and imaging. We assume that $\nu$ is continuous and supported in $|\hat p|\le B_p$. If the shape of the filter is fixed, say Hann, then $\nu(t)=\nu_0(t/B_p)$ with some fixed $\nu_0$ supported in $[0,1]$, see, e.g., \r{3.8a}.  
Then \r{3.1} takes the form
\be{3.9}
\gamma_{h^{1/2} \mathcal{R}_\nu^{-1}g}(x,\xi)= \frac{|\xi|\nu_0^2(|\xi|/B_p)}{4\pi} \gamma_g\circ \kappa(x,\xi),
\ee
where $\Ra_\nu^{-1}= \Ra\nu(D_p)$ is the filtered inversion, defined as the operator applied to $g$ in \r{3.8}. Then the equivalent to \r{3.2} is
\[
\gamma_{h^{1/2} \mathcal{R_\nu}^{-1}g}(x,\xi)= \frac{|\xi|\nu_0^2(|\xi|/B_p)}{4\pi}\gamma^\sharp.
\]
 Taking $B_p=B$ as before,  similarly to \r{3.5} we get the following analog of \r{3.5}
\be{3.12}
\begin{split}
\VAR^0_x(h^{1/2} \mathcal{R_\nu}^{-1}g) &= \int_{ |\xi|\le B } \gamma_{\mathcal{R}_\nu^{-1}g}(x,\xi)\, \d\xi\\
&  =  \frac{1}{4\pi}\gamma^\sharp  \int_{|\xi|<B}|\xi|\nu_0^2(|\xi|/B) \,\d\xi = \frac{1}{4\pi}\gamma^\sharp  2\pi  \int_0^B \rho^2\nu_0^2(\rho/B)\,\d\rho\\
&=  \frac{1}6  B^3 \gamma^\sharp c_\nu,
\end{split}
\ee
where
\be{3.11a}
c_\nu:= 3\int_0^1\rho^2\nu_0^2(\rho)\,\d\rho.
\ee

We proved the following.

\begin{theorem}[filtered inversion]\label{thm2}
Under the assumptions above, in particular assuming white noise and no undersampling, with a filter $\nu_0(|D_p|/B) $, we have 
\be{3.5bb}
\STD^0(\mathcal{R}^{-1}g)= \frac{B^{3/2}\sqrt{c_\nu}}{\sqrt{24 B_\varphi B_p h}} \STD^0(g).
\ee
If $\mathcal{R}f$ is sampled sharply, then
\[
\STD^0(\mathcal{R}^{-1}g)=  
 \left(\frac{B c_\gamma}{24 Rh}\right)^{1/2}\STD^0(g).
\]
\end{theorem}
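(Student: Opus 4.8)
The plan is to derive Theorem~\ref{thm2} as a direct specialization of the general machinery already assembled, paralleling exactly the derivation of Theorem~\ref{thm1} (the unfiltered case) but carrying the extra filter factor $\nu_0(|D_p|/B_p)$ through each step. First I would note that the filtered inversion operator is $\mathcal{R}_\nu^{-1} = \mathcal{R}'\nu(D_p)$ (up to the constant $\tfrac1{4\pi}$ absorbed into the normalization of $\mathcal{R}^{-1}$), which is of the form $A^{-1}R$ with $R = r(x,hD)$ the \HPDO\ whose symbol is $r(\hat p) = \nu_0(|\hat p|/B_p)$ acting in the $p$ variable only. Since $g$ is \sc ly band limited and the filter is compactly supported in the dual variable, Theorem~\ref{thm_IP} applies verbatim: by Remark~\ref{remark_mu} and \r{13}, the defect measure of the reconstruction picks up the factor $|r|^2 = \nu_0^2(|\hat p|/B_p)$. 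Pulling back by $\kappa$ (under which $|\hat p| \mapsto |\xi|$, as recorded in the formula for $\kappa_\pm$), this becomes $\nu_0^2(|\xi|/B_p)$, which is precisely \r{3.9}. Setting $B_p = B$ and using that on the box $\mathcal{B}$ one has $\gamma_g = \gamma^\sharp := s^n\sigma^2/(2\pi)^n$ by Theorem~\ref{thm_m}, we obtain the stated form of $\gamma_{h^{1/2}\mathcal{R}_\nu^{-1}g}$.

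Next I would integrate this density in $\xi$ over the disk $|\xi|\le B$ to compute $\VAR_x^0$, exactly as in \r{3.5} but now with the weight $\nu_0^2(\rho/B)$ inside the radial integral. Passing to polar coordinates gives $\VAR_x^0(h^{1/2}\mathcal{R}_\nu^{-1}g) = \tfrac1{4\pi}\gamma^\sharp \cdot 2\pi\int_0^B \rho^2\nu_0^2(\rho/B)\,\d\rho$, and the substitution $\rho = B\tilde\rho$ turns this into $\tfrac16 B^3\gamma^\sharp c_\nu$ with $c_\nu = 3\int_0^1 \tilde\rho^2\nu_0^2(\tilde\rho)\,\d\tilde\rho$, which is \r{3.12}–\r{3.11a}. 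This is the same computation as in the unfiltered case, where implicitly $\nu_0 \equiv 1$ on $[0,1]$ so that $c_\nu = 3\int_0^1\tilde\rho^2\,\d\tilde\rho = 1$ and \r{3.12} reduces to \r{3.5}.

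Then, for the data side, I would recall from the unfiltered analysis that $\VAR^0_{p,\varphi}(g) = 4B_\varphi B_p\gamma^\sharp$ (with $B_\varphi, B_p$ the sampling-determined band limits), so that $\gamma^\sharp = \VAR^0(g)/(4B_\varphi B_p)$. Substituting this into $\VAR_x^0(h^{1/2}\mathcal{R}_\nu^{-1}g) = \tfrac16 B^3\gamma^\sharp c_\nu$, taking square roots, and remembering the factor $h^{1/2}$ on the left (so that $\STD^0(\mathcal{R}_\nu^{-1}g)$ carries a compensating $h^{-1/2}$), yields
\[
\STD^0(\mathcal{R}^{-1}g) = \frac{B^{3/2}\sqrt{c_\nu}}{\sqrt{24\,B_\varphi B_p\,h}}\,\STD^0(g),
\]
which is \r{3.5bb}. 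Finally, specializing to sharp sampling of $\mathcal{R}f$, i.e., $B_p = B$ and $B_\varphi = RB$ as in the unfiltered case, the denominator becomes $\sqrt{24RB^2 h}$, and the $B^{3/2}$ in the numerator combines with one power of $B$ from the denominator to leave $(Bc_\nu/(24Rh))^{1/2}\STD^0(g)$, as claimed. (The symbol $c_\gamma$ in the last displayed formula of the theorem is evidently a typo for $c_\nu$.)

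I do not expect any genuine obstacle here: every nontrivial ingredient — the transformation law \r{13} for defect measures under FIOs, its extension \r{Rem4.2} to sandwiching by \HPDO s, the removability of the zero-section cutoff $Q$ under the absolute-continuity hypothesis (Remark~\ref{rem_R}, valid since here $\sigma_p(\mathcal{R}^*\mathcal{R})^{-1}\kappa^*\d\mu_g = C|\xi|\,\d x\,\d\xi$ near $\xi = 0$), and the flatness $\gamma_g = \gamma^\sharp$ of white noise from Theorem~\ref{thm_m} — has already been established. The only point requiring a word of care is the identification of the filter $\nu(D_p)$ as a bona fide \sc\ \HPDO\ in the $p$ variable with $h$-independent symbol $\nu_0(\cdot/B_p)$ compactly supported in the dual variable, so that Theorem~\ref{thm_IP} is literally applicable; but this is immediate from the standing assumption that $\nu$ is continuous and supported in $|\hat p|\le B_p$, and that $g$ is \sc ly band limited so the filter only ever sees a bounded range of frequencies. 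Thus the proof is essentially a bookkeeping exercise: insert the factor $c_\nu$ in the right place and track the powers of $B$ and $h$.
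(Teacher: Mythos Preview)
Your proposal is correct and follows essentially the same approach as the paper: the paper derives \r{3.9} via Remark~\ref{remark_mu}/\r{13} with the filter factor $|r|^2=\nu_0^2$, specializes $\gamma_g=\gamma^\sharp$ on $\mathcal{B}$, integrates in polar coordinates to obtain \r{3.12}--\r{3.11a}, and then divides by $\VAR^0(g)=4B_\varphi B_p\gamma^\sharp$ and takes square roots, exactly as you outline. Your observation that $c_\gamma$ is a typo for $c_\nu$ is also correct.
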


If there is no filter ($\nu_0=1$), we have $c_\nu=1$, which explains the appearance of the factor $1/3$ in the definition of $c_\nu$. 
For the Hann filter, $c_\nu=3/8-45/(16\pi^2)\approx 0.0900$, then  $\sqrt{c_\nu}\approx 0.3000 $. For the cosine filter, $\sqrt{c_\nu}\approx 0.4427$.  In \r{3.6b} below, the constant would be approximately $0.07676$ for the Hann filter and $0.11327$ for the cosine one. 

\subsection{Numerical experiments} We use MATLAB and the built in {\tt radon} and {\tt iradon} routines to compute and invert numerically the Radon transform in the plane. The default angular step is one degree but it can be changed. Assume that $f$ is given on an $N\times N$ lattice. Then by default, {\tt radon} computes $\mathcal{R}f(\varphi,p)$ on a $360\times N\sqrt{2}$ lattice, with $N\sqrt 2 $ rounded; the actual formula is $2\,\text{ceil}\big(\sqrt{2}(N-\text{floor}((N-1)/2)-1)\big)+3$. Then {\tt iradon} inverts the data to the original grid (with $N$ replaced by $N+1$ or $N+2$ which does not matter in view of our asymptotic setup). 

As we showed in \cite{S-Sampling}, this choice of the discretization of $\mathcal{R}f$ is suboptimal for $N\gg1$; we need to compute $\mathcal{R}f$ on an $N_\varphi\times N_p$ lattice with $N_p=2N$, $N_\varphi=2\pi N$ at least, and some oversampling would be beneficial, see Figure~\ref{fig_1}. With most test images, the (dominating) frequencies are well below the Nyquist limit, that is why most of the time the inversion is satisfactory. When we add, say white noise, the Nyquist limit is reached, and the inversion with {\tt iradon} will alias some of those frequencies.

\subsection{Discretization} \

Let us say we have $f$ on an $M\times N$ grid. We think of that as discrete samples of $f$ originally defined on, say, $[-a,a]\times [-b,b]$. 
This we have the steps $s_{x_1}=2a/M$, $s_{x_2}=2b/N$. Assume for a moment that we apply the classical sampling theory (no small parameter $h$) in a formal way at this point. Then those steps  have to be $\pi/B_{x_1}$, respectively $\pi/B_{x_2}$ at most, where $B_{x_j}$ are the band limits in the $x_j$ variable. Then 
we get $B_{x_1}=M\pi/(2a)$, $B_{x_2}=N\pi /(2b)$ as the least upper bounds of the band limits of $f$.  For the band limit of $|\xi|$, we have $B= \big(B_{x_1}^2+B_{x_2}^2\big)^{1/2}$, and the maximum is achieved at the vertices of the box $[-B_{x_1},B_{x_1}] \times [-B_{x_2},B_{x_2}]$. Note that the disk $|\xi|\le B$ contains more frequencies than can be properly sampled on the $M\times N$ grid; the extra ones lie outside that inscribed  box. 

We can connect the classical sampling theory to the \sc\ one as follows. Denote for a moment the \sc\ quantities with tildes over them. Let $M=\tilde M/h$, $N=\tilde N/h$, with $\tilde M$, $\tilde N$ fixed. The steps $s$ ($s_{x_1}$, etc.) are equal to the \sc\ \textit{relative} steps $\tilde s$ but since in our sampling theorems the absolute  steps are $sh$, this means that the absolute steps are multiplied by $h$. Then our analysis holds as $h\to0$, i.e., as $M\to\infty$, $N\to\infty$ (keeping the ratio constant) and the steps going to zero at a rate $\sim h$. This is the usual setup in numerical analysis where $\tilde s=1$, i.e., the step is $h$.

For each such $f$ we define the $L^2$ norm as 
\[
\|f\|^2= \frac{4ab}{MN}\sum_{i=1}^{M} \sum_{j=1}^{N}|f_{ij}|^2.
\]
This is consistent with formula (16) in  \cite{S-Sampling} and approximates the $L^2$ norm of a continuous function on that box with samples $f_{ij}$. Then
\[
\STD(f) = \Big(\frac{1}{MN}\sum_{i=1}^{M} \sum_{j=1}^{N}|f_{ij}|^2\Big)^\frac12 =\frac{\|f\|}{2\sqrt{ab}}
\]
 is  the standard deviation $\STD(f)$ of $f$ when the mean of $f$ is zero. 

We will apply this to both $f$ defined on $[-a,a]^2$ for some $a>0$,  and to $\mathcal{R}f$ on $[-\pi,\pi]\times [-R,R] $. 

Assume that $g$ is a discrete representation of a function on $[-\pi,\pi]\times [-R,R] $ sampled on an $N_\varphi\times N_p$ lattice. Assume $g$ is obtained by a white noise process (with zero mean)  and  variance $\sigma^2$. 
{Then a slight extension of Lemma~\ref{lem:as-convergence-variance} shows that}
$\VAR(g)\to \sigma^2 $ almost surely.

The sampling steps are $s_\varphi=2\pi/N_\varphi$, $s_p=2R/N_p$; hence   to avoid aliasing, we need $B_\varphi \le N_\varphi/2$, $B_p\le \pi N_p/(2R)$.

Let $f$, to which $\mathcal{R}$ will be applied, represent a discretization of a function on $[-a,a]^2$, and assume that it is sampled on an $N\times N$ lattice. Then, similarly, the sharp  band limit in each variable is $B_{x_1}=B_{x_2}=\pi N/(2a)$. 

As we showed in \cite{S-Sampling}, and it follows easily from \r{3.4}, to avoid aliasing, we need 
\be{3.5N}
N_p\ge 2N, \quad N_\varphi\ge 2\pi N. 
\ee
This inequality, as well as the inequalities and the equalities below are meant in asymptotic sense, i.e., one should multiply, say the r.h.s.\ in this case by $(1+o(1))$, as $N\to\infty$. 
Note that \r{3.5N} follows from viewing $f$ as supported in $B(0,\sqrt{2}a)$, i.e., $R=\sqrt2R$, with frequency set in $|\xi|\le B:=\sqrt2 B_{x_1}$. As we mentioned above, that ball contains more frequencies than those in its inscribed square. For every $g=\mathcal{R}f$ in the range of $\mathcal{R}$ with $f$ as above, after an inversion we get $f$, of course, and then the frequencies will fall inside the inscribed square $[-B_{x_1},B_{x_1}]^2$. If we take $g$ to be ``noise'', not in the range of $\mathcal{R}$, then by the mapping property of $\kappa^{-1}$, see \cite{S-Sampling}, formula (51), the frequency set of $\mathcal{R}^{-1}g$ will generically fill the disk $|\xi|\le B= B_{x_1}\sqrt{2}$. If we want to avoid aliasing (without applying a filter), we would need to reconstruct $f=\mathcal{R}^{-1}g$ on an $N\sqrt2\times N\sqrt2$ grid or better. On the other hand, for all practical purposes, we would want to apply a filter. 

\begin{figure}[ht]
\begin{center}
	\includegraphics[page=2 ]{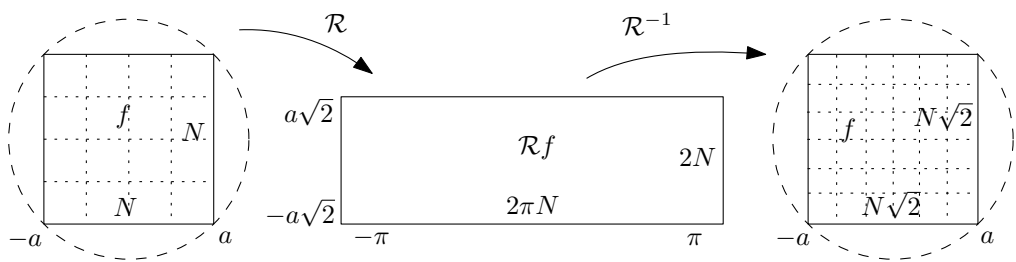}
\end{center}
\caption{\small The sampling sets of $f$, $\mathcal{R} f$ and the reconstructed $f$ with sharp sampling requirements.  
}
\label{fig_1}
\end{figure}

Therefore, the discrete version of \r{3.5bb}, including a filter now, is
\be{3.6}
\VAR(\mathcal{R}_\nu^{-1}g) = \frac{\pi^2N^3c_\nu }{12a^2N_\varphi N_p}\VAR(g),
\ee
where the formula has the same asymptotic and probabilistic meaning as explained after Theorem~\ref{thm1}. 

Assume now that we sample sharply, i.e., we have equalities in \r{3.5N}. Then $N_p=2N$, $N_\varphi= 2\pi N$ and we get
\be{3.6a}
\VAR(\mathcal{R}_\nu^{-1}g) = \frac{\pi Nc_\nu}{48a^2 }\VAR(g).
\ee
Therefore, 
\be{3.6b}
\STD(\mathcal{R}_\nu^{-1}g) \approx 0.2558 \sqrt{c_\nu}\frac{\sqrt{N}}a \STD(g).
\ee

We can make the following conclusions from \r{3.6}, \r{3.6a} and \r{3.6b}.
\begin{itemize}
	\item With a sharp sampling rate, the noise ratio, measured as its standard deviation relative to that of $g$, increases as $\sqrt N$. This is understandable since we are allowing for higher frequencies, and $\mathcal{R}^{-1}$ is of order $1/2$. At the same time, we can handle $f$ with higher frequencies because $N$ is proportional to the Nyquist bound. 
	\item The noise ratio, for a fixed $N$, is minimized when we sample sharply. 
	\item In many applications, increasing $N_\phi$ and $N_p$ decreases the size of the detectors, and then the discrete samples $g_{ij}$ are scaled down by constants times $N_\phi$ and $N_p$.  If the added noise is expressed in units relative to that, then the quotient in \r{3.6} would be proportional to $N^3N_\varphi N_p$, i.e., the noise ratio increases with $N_\phi$ and $N_p$. This is known in engineering.
\end{itemize}

\textbf{Default {\tt iradon} inversion.} 
First we present an inversion with the default one degree angular step. We choose $N=601$, $N_\varphi=360$ by default and $N_p=853$ is chosen by {\tt radon} as an approximation to $601\sqrt{2}$. We choose $g$ to be normally distributed (Gaussian) noise  with standard deviation one. Then we invert it with {\tt iradon}. A plot of the  modulus $|\hat f|$ of the Fourier transform $\hat f$ of the inversion $f$ is shown in Figure~\ref{fig_2}. 
\begin{figure}[ht]
\begin{center}
	\includegraphics[scale=0.2]{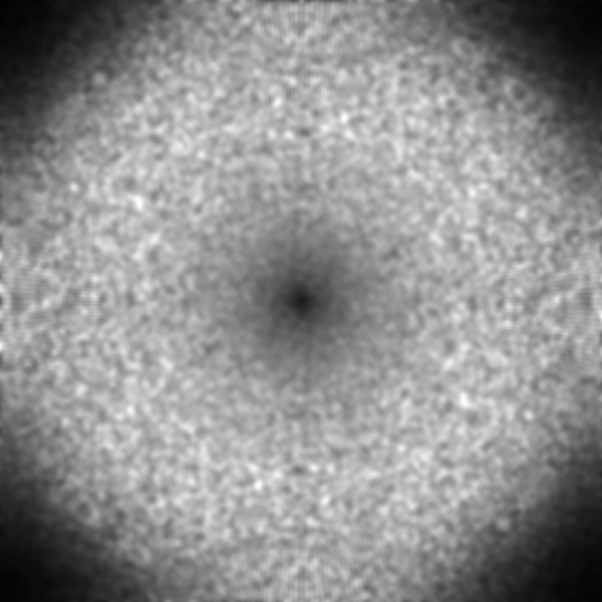}\hspace{20pt}
	\includegraphics[trim = 0 63 0 60  ,scale=0.235]{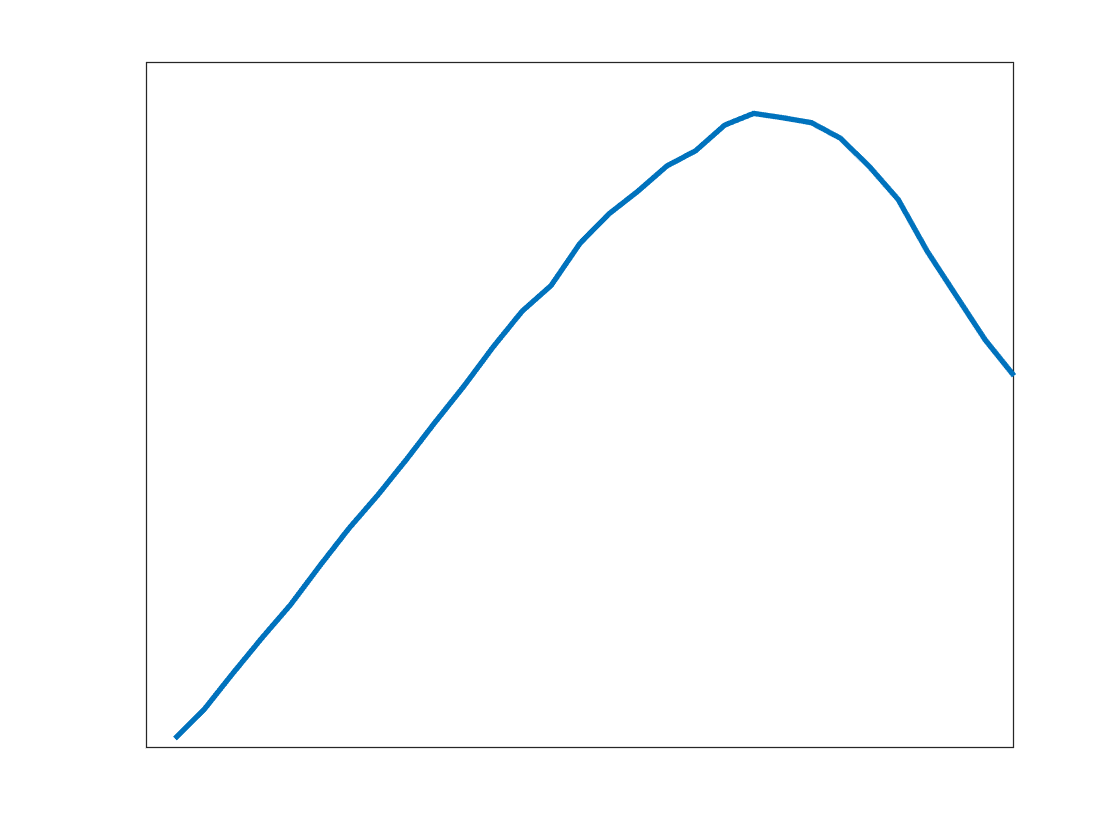}
\end{center}
\caption{\small Left: $|\hat f|$ where $f=   \mathcal{R}^{-1} g $ and $g$ is white noise. Right: radial profile of  $|\hat f|^2$ from the center to one of the sides (but not all the way along the diagonal to a vertex).}
\label{fig_2}
\end{figure}
We chose to plot here and below $|\hat f|$ rather than $|\hat f|^2$ for clarity. 
With an exact inversion, as $N\to\infty$, we should be seeing a density plot of square root of \r{3.2}, i.e., $c|\xi|^{1/2}$, filling the whole square. We see is that the density increases in the radial variable $|\xi|$ from the center but at some point starts to decease until it visibly becomes zero when $|\xi|$ is slightly larger than  a  half of the side, and it is radially symmetric. This behavior can be explained by the following. The default choice $N_p=N\sqrt 2$ (rounded) of $N_p$ actually lowers the Nyquist limit of the reconstructed $f$ to $1/\sqrt2$ of its original value. Without that, the boundary of the disk in Figure~\ref{fig_2} would be the circumscribed circle of that square but with that choice, it is the inscribed one. The gradual decrease close the border can be explained by an effectively low pass filter when inverting $\mathcal{R}$. Our numerical experiments below at much higher resolutions for $\mathcal{R}f$ confirm that. 

A similar experiment with a uniformly distributed noise $g$ in a symmetric interval around the origin produces virtually the same plot of $|\hat f|$, not shown. 
In both cases, the values of $f$ look normally distributed.

\textbf{High precision inversion.} We present numerical inversions with a proper discretization. We want to model adding noise to discrete measurements of the ``continuous'' $\Ra f$; inverted with high precision; i.e., by upsampling first the discrete data several times to mimic inversion in the ``continuous domain''. 
We do the following.

\begin{itemize}
\item[(i)] The function $f$ is assumed to be defined on $[-1,1]^2$ and sampled on an $N\times N$ lattice. 
\item[(ii)] We compute a high accuracy $\mathcal{R}f$ on a $N_\varphi\times N_p$ lattice, where $N_\varphi\ge 2\pi N$, $N_p\ge 2N$. To do that, we perform the computations on a finer grid. 
\item[(iii)] We add noise to the so-computed $\mathcal{R}f$. 
\item[(iv)] We invert the noisy data by upsampling it first. The reconstructed $f_\text{noisy}$ is either left sampled on a finer grid or downsampled to the original $N\times N$ one. 
\end{itemize}

We  give more details below. 
To do (ii), we upsample $f$ on an $mN\times mN$ lattice with {\tt Lanczos-3} with some $m\ge1$. Typical $m$'s we use are $m=2$ and $m=3$. 
Then we compute $\mathcal{R} f(\varphi,p)$ with {\tt radon} on a $2\pi mN\times 2mN$ lattice which we view as $\mathcal{R} f(\varphi,p)$ on $[-\pi,\pi]\times [-\sqrt2, \sqrt2]$ sampled uniformly in each variable. The parameter $m$ represents the degree of oversampling: $m=1$ corresponds to the sharp lower bound for proper sampling. Since computing $\mathcal{R}f$ involves interpolation of $f$ for computing the line integrals (we use the option 'spline' in {\tt radon}), such an oversampling allows us to reduce the errors in such interpolation compared to the sinc inversion. Then we downsample the computed $\mathcal{R}f$ to a lower resolution $N_\varphi\times N_p$ (without interpolation; we take every  $m$-th value in each row and column). 
This simulates a high precision  $\mathcal{R}f$ computed on the $N_\varphi\times N_p$ grid. To do (iii), we add noise. 

In (iv), we 
invert $\mathcal{R}$ on that lattice. We could resize to a different (but high enough resolution) before that but the results do not look much different. The resulting $f=\mathcal{R}^{-1}g$ is computed on an $mN\times mN$ lattice, which is viewed as $f$ on $[-1,1]^2$ sampled uniformly. If needed, that $f$ could be resampled to an $N\times N$ lattice but since it does not contain frequencies higher than the Nyquist limit $B=N\pi/2$ corresponding to $N$, this is not needed for computing the standard deviation, for example. 

We want to emphasize that it is possible to do (close to) ideal  upsampling, say from $N_x\times N_y$ to $\tilde N_x\times \tilde N_y$ with $\tilde N_x>N_x$ and $\tilde N_y>N_y$ which preserves the band limits $B_{x_1}=N_x\pi/2$ and $B_{x_2}=N_y\pi/2$ by using the Fourier transform. On the other hand, this is not what is usually done. When we use  {\tt  Lanczos-3}, for example, the interpolation kernel is the inverse Fourier transform of a smoothened version of $\nu_{[-1,1]}$, see Figure~\ref{fig_3}, which is close to be equal to one in $[-0.5, 0.5]$ at least as explained in section~\ref{sec_Lan3}. 
On the other hand, Theorem~3.1 in \cite{S-Sampling} requires some oversampling, and an interpolation kernel to be the Fourier transform of a function similar to that in Figure~\ref{fig_3}, equal to one on the (smaller) frequency band. Therefore if we choose $m\ge 2$ we are in this regime.

To do experiments with noise only, we take $f=0$ in \r{2}. Then steps (i) and (ii) are trivial, since $\mathcal{R}f=0$. So our starting point is  (iii), where we take $g$ to be generated by either a normally or a uniformly distributed noise, on  an $N_\varphi\times N_p$ grid. We upsample by a factor of $m$, i.e., to an $mN_\varphi\times mN_p$ grid an do the inversion there. We take $m=2,3,4,5$ in our experiments.

\textbf{Non-filtered inversion.} 
We test \r{3.5b} now. To this end,  we take $g$ to be either Gaussian or uniformly distributed noise with zero mean on an $N_\varphi\times N_p$ grid as in (ii), with equalities there, i.e., $N_\varphi=2\pi N$, $N_p=2N$.  
Then we cut the Fourier transform of the result sharply  to $1/m$-th of the frequency box corresponding to the original resolution $N_\varphi\times N_p$, $m=2,3,\dots$; denote this by $\nu_m(D)g_m$, and apply $\mathcal{R}^{-1}$ to it without changing the grid size. 
This procedure provides more precise computation than just inverting the noise 
because it avoids the smoothing which happens in the part we cut off.  If we had $\Ra f$ of a non-zero $f$ polluted with noise, we would have upsized  the data $m$ times in each dimension first, and then would have performed that procedure. 

\begin{figure}[hb]\,\hfill
\begin{subfigure}[t]{.12\linewidth}%
\includegraphics[height=.1\textheight]{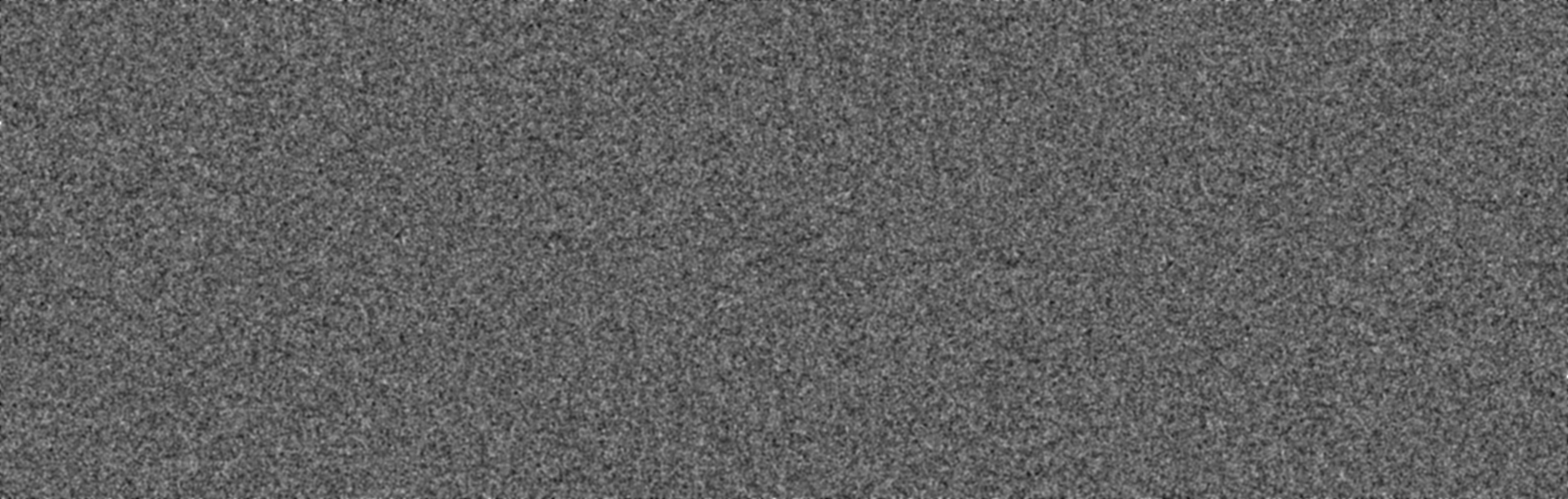}
 \captionsetup{width=2.2\linewidth}
\caption{$|\hat g_m|$ with $m=2$: white noise }
\end{subfigure}\hfill\hspace{2in}
\begin{subfigure}[t]{.19\linewidth}
\centering
\includegraphics[height=.15\textheight]{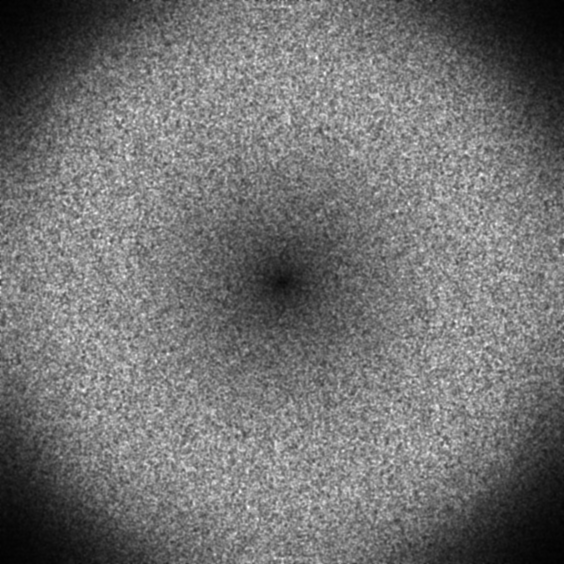}
\caption{reconstruction }
\end{subfigure}\hfill \,\\ \vspace{10pt}
\,\hfill
\begin{subfigure}[t]{.12\linewidth}%
\includegraphics[height=.1\textheight]{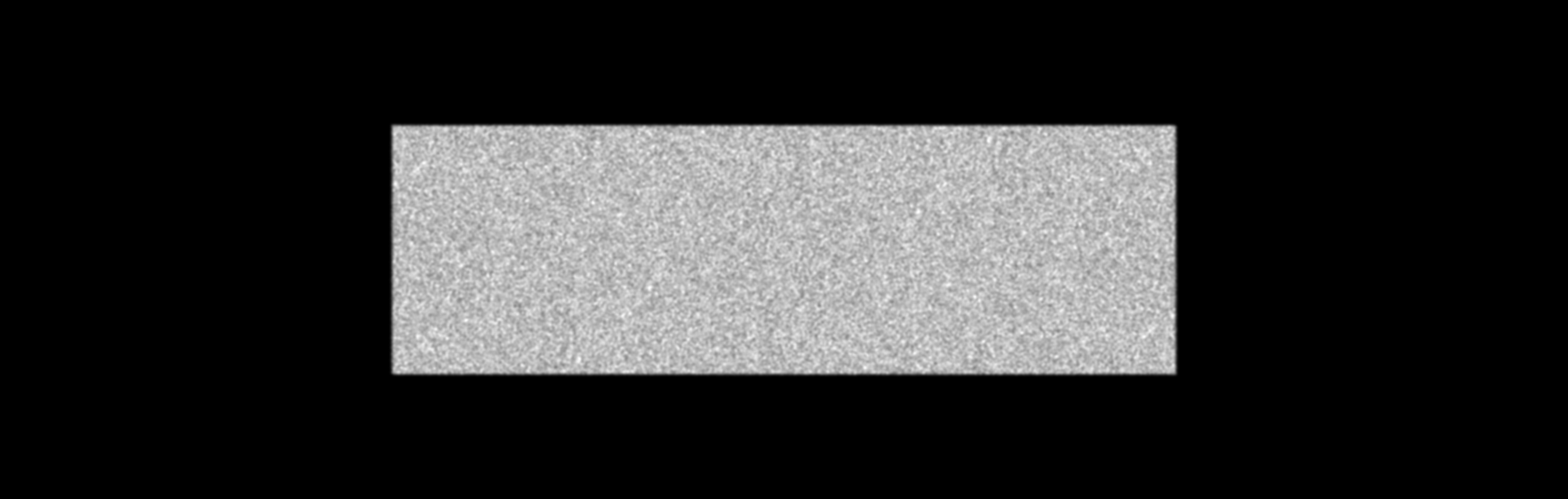}
 \captionsetup{width=3.7\linewidth}
\caption{$|\hat g_m|$ with $m=2$  cut to a half in each dimension}
\end{subfigure}\hfill\hspace{2in}
\begin{subfigure}[t]{.19\linewidth}
\centering
\includegraphics[height=.15\textheight]{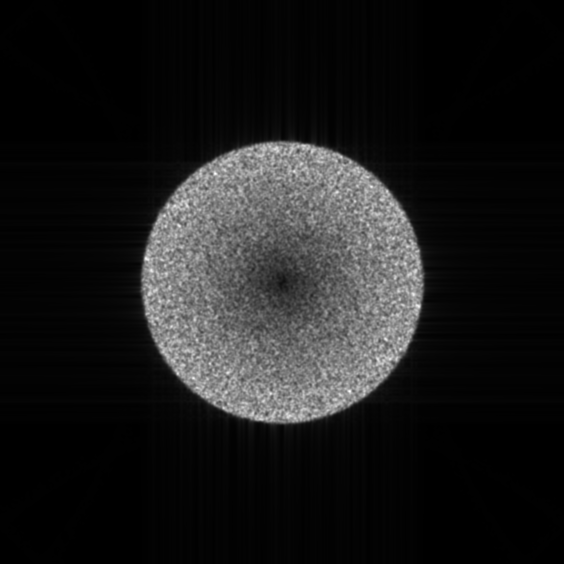}
\caption{reconstruction}
\end{subfigure}\hfill \,\\ \vspace{10pt}
\,\hfill
\caption{Top: We choose $g_m$, $m=2$ to be white noise; then $|\mathcal{R}^{-1}g_m|$ looks like in Figure~\r{fig_2}. Bottom: $\nu_m\hat  g_m$ and  $\mathcal{R}^{-2}\nu_m g_m$, i.e., the Fourier transform of the reconstruction after the frequency cut-off of the noise. 
}\label{fig_8}
\end{figure}

Since we effectively multiply both $N_\varphi$ and $N_p$ by $m$,
 by \r{3.6}, we see that \r{3.6b} can be written in terms of the noise ratio as
\be{3.13}
\text{Noise ratio}: =   m\STD(\mathcal{R}^{-1}g)\Big/ \frac{\sqrt{N}}a \STD(g)\approx 0.2558 .
\ee
We take $g$ first to be a Gaussian noise with several choices of $N$ and $m$; doing five experiments for each choice. The results are in  Table~\ref{t1} below and in Figure~\ref{fig_8}, we illustrate the inversion with $m=2$. 
\begin{table}[ht]
\begin{tabular}{|l|l|l|l|}
 \hline
 \multicolumn{4}{|c|}{Noise ratio with Gaussian noise. Theoretical ratio: $\mathbf{0.2558} $ } \\
 \hline & $N=100$ & $N=200$ & $N=300$   \\ \hline 
m=1  &  $  0.2224  \pm 0.61\%$& $0.2223\pm 0.32\%$ &  $0.2226\pm 0.16\%$  \\ \hline
m=2  & $0.2552\pm 0.38\%$  & $0.2572\pm 0.25\%$  &  $0.2578\pm 0.17\%$   \\ \hline
m=3  & $0.2569\pm 0.70\%$  & $0.2584\pm 0.41\%$  &  $0.2591\pm 0.07\%$   \\ \hline
\end{tabular}
\caption{Noise experiments}\label{t1}
\end{table}
Similar experiments with a uniformly distributed noise with mean zero generate similar numbers, not shown. 

\textbf{Filtered inversion.} We perform similar experiments with the Hann and the cosine filter. Since the Hann filter is very small near the band limit $B$, see Figure!\ref{fig_filters}, the smoothing effect of the interpolation used by {\tt iradon}, see Figure~\ref{fig_2}, plays a negligible role. Modeling that smoothening by the Lanczos-3 profile, for example, see Figure~\ref{fig_3}, by introducing an extra factor in \r{3.11a} shows an error of less than $1\%$ in $\sqrt{c_\nu}$.  Then even with $m=1$, we get a  result close to the theoretical one, which is approximately $0.07676$ for the Hann filter and $0.11327$ for the cosine one, as we computed above. For $N=300$, for example, we get $0.0767\pm 0.11\%$ for Hann and $0.1105\pm 0.21\%$ for cosine, where the smoothing effect of {\tt iradon} is a bit less compensated for. The numbers for normally and uniformly distributed noise are very close. 

For the cosine filter, we plot $|\hat f|$ (instead of $|\hat f|^2$ for clarity), the computed radial profile of $|\hat f|^2$, and its theoretical one $\rho\nu_\text{cosine}^2(\rho) =\rho\cos^2(\pi\rho/2)$ in Figure~\ref{fig_cos} below. The radial profile is computed as $|\hat f|^2$ averaged over $25$ concentric rings. 
 In this case, $|\hat f (\xi)|^2$ is proportional to  the microlocal defect measure of $f$ at any fixed $x$ (it does not depend on $x$). 
\begin{figure}[ht]
\begin{center}
	\includegraphics[trim = 0 79 0 60  , scale=0.22]{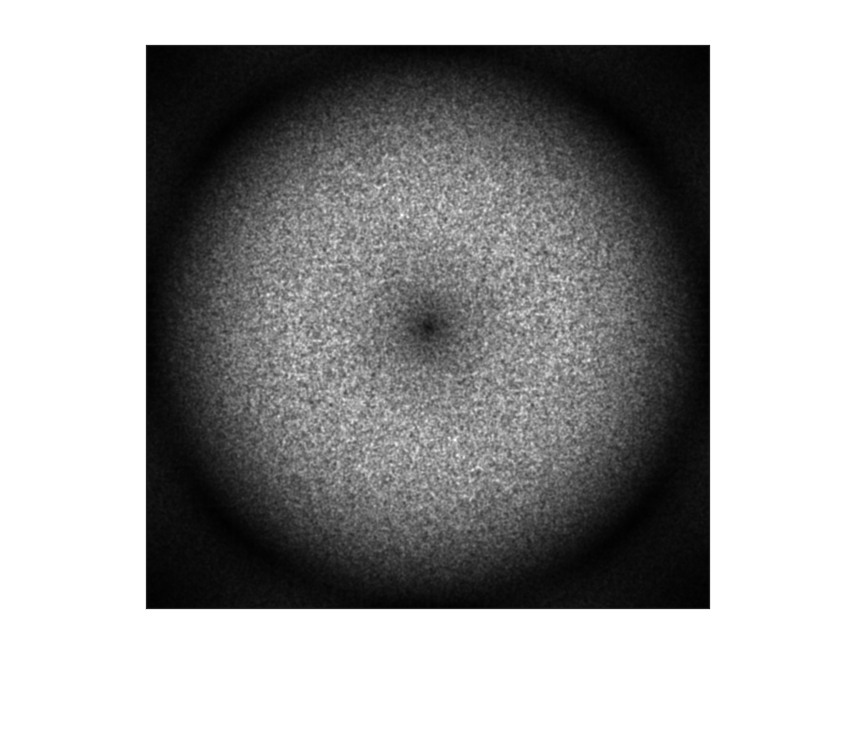}
	\includegraphics[trim = 0 50 0 60  ,scale=0.18]{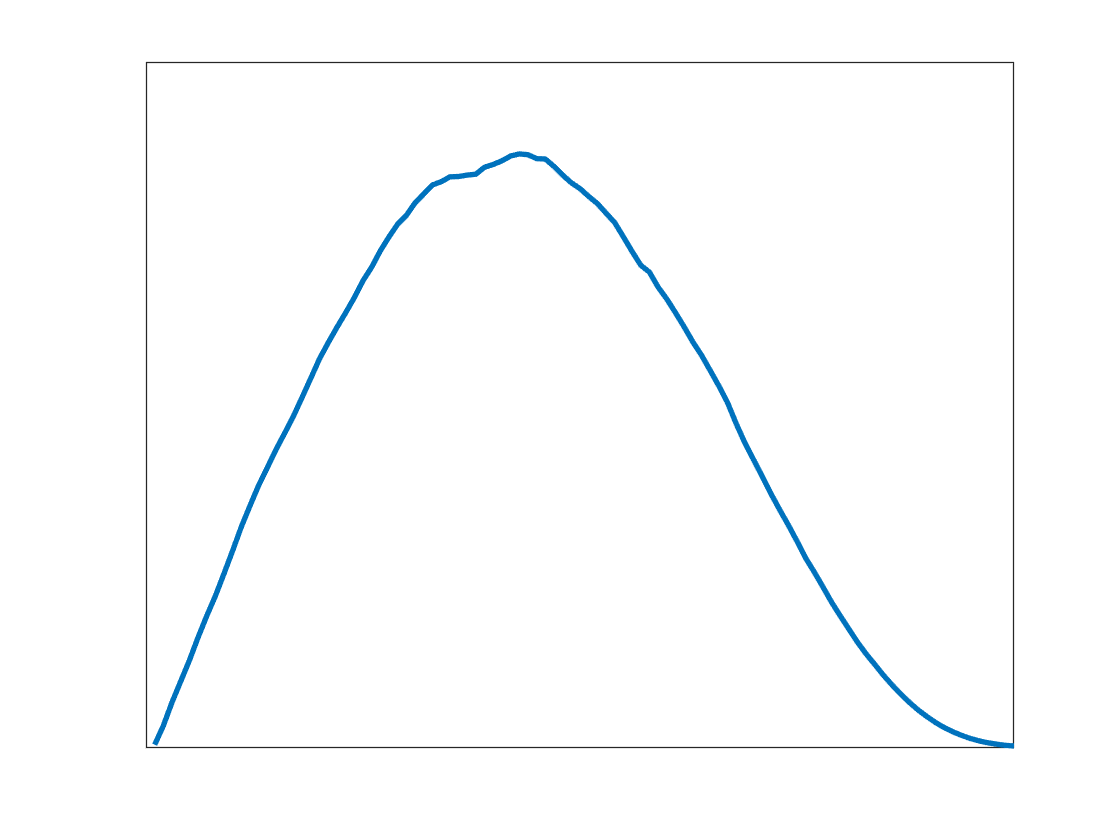}
	\includegraphics[trim = 0 40 0 0 ,scale=0.315]{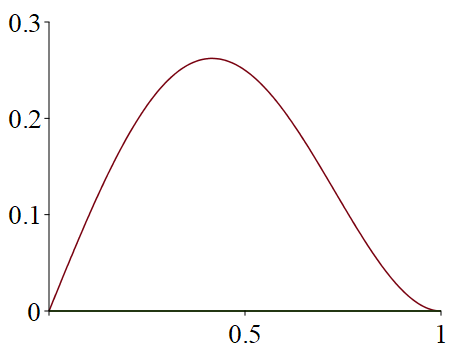}
\end{center}
\caption{\small Cosine filter.  Left: $|\hat f|$ where $f=   \mathcal{R}^{-1}\nu_\text{cosine} g $ and $g$ is white noise. Center: The computed radial profile of  $|\hat f|^2$ from the center to one of the sides. Right: The theoretical profile $\rho\nu_\text{cosine}^2(\rho) =\rho\cos^2(\pi\rho/2)$.}
\label{fig_cos}
\end{figure}
The Hann filter behaves similarly, with the computed radial profile of $|\hat f|^2$ very close to its theoretical one $\rho\cos^4(\pi\rho/2)$.

\subsection{Percentage of added noise} In many numerical simulations, we add   noise to the data, as a percentage of a certain norm of the data, and measure the percentage of the noise in the reconstruction. This is especially interesting in (mildly or not) ill-posed problems. 

There is a lot of flexibility in choosing those norms. Let us say that we choose the $L^2(B(0,R))$ norm for $f$ and the $L^2(S^1\times (-R,R))$ norm for $\Ra$. Then the left inverse $\Ra^{-1}$ is not bounded in those spaces but on \sc ly bounded functions (which are smooth), it is; we refer to \cite{S-Sampling} for \sc\ estimates. 

Let $g_\text{noise}$ be the noise added to $g=\Ra f$, see \r{2}. Its percentage is given my $\|g_\text{noise}\|/\|\Ra f\|$ (converted to percentage). We are interested in $\|f_\text{noise}\|/\|f\|$, where $f_\text{noise} = \Ra^{-1}g_\text{noise}$ is the noise in the reconstruction. We have
\be{3.14}
\frac{\|f_\text{noise}\|}{\|f\|} =  K\frac{\|g_\text{noise}\|}{\|\Ra f\|}, \quad K:=  \frac{\|f_\text{noise}\|}{\|g_\text{noise}\|} \cdot  \frac{\|\Ra f\|}{\|f\|}.
\ee
The coefficient $K$ is the multiplier which relates the two percentages. Its first factor is proportional to the noise ratio we studied earlier since the $L^2$ norms are proportional to the standard deviations. The second one depends on $f$. To analyze  it, write
\[
\|\Ra f\|^2  \longrightarrow\int\gamma_{\Ra f} \, \d \varphi\,\d p\,\d \hat\varphi\,\d\hat p,
\]
where the convergence is in the sense of Theorem~\r{thm_m}.
Then we integrate over the \sc\ wave front. 
By \r{12aa}, 
\[
\begin{split}
\|\Ra f\|^2 &\longrightarrow \int(b\gamma_{f} )\circ \kappa^{-1}\, \d \varphi\,\d p\,\d \hat\varphi\,\d\hat p = 4\pi \int \frac{\gamma_{f}\circ \kappa^{-1}}{|\hat p|} \, \d \varphi\,\d p\,\d \hat\varphi\,\d\hat p \\
&= 4\pi \int \frac{\gamma_f(x,\xi)}{|\xi|}\,\d x\,\d\xi= 4\pi \big\||D|^{-1/2}f\big\|^2. 
\end{split}
\]

We used again the fact that $\kappa$ is an isometry. 
This works for general operators but for $\Ra$ we actually know that $4\pi \|f\|^2=\||D_p|^{1/2}\Ra f\|^2 $. 
We can write $\tilde f = |D|^{1/2}f$, intertwine $|D|^{1/2}$ with $|D_p|^{1/2}$, to get the formula above as an exact one, not just a limit.  
Therefore, \r{3.14} yields
\be{3.15}
K=  \frac{\|f_\text{noise}\|}{\|g_\text{noise}\|} \cdot  \frac{4\pi \big\||D|^{-1/2}f\big\|}{\|f\|}.
\ee
Since the noise ratio is independent of $f$, we see that $K$ would be large if, roughly speaking, $f$ is low frequency. Most conventional images (with $f\ge0$) have a very large zero frequency $\hat f(0)$ relative to the rest of the spectrum and the second quotient in \r{3.15} does not vary much. When $\int f(x)\d x=0$, we have $\hat f(0)=0$ and functions the variation of this quotient is higher. Then we do not need to isolate the zero section. 

In Figure~\ref{fig_Rf/f} we demonstrate this effect. We choose $N=300$ and the dimensions of the grid for $\Ra f$ is chosen with equalities in \r{3.5N}, see also Figure~\ref{fig_1}. We add the same amount of normally distributed noise, $20\%$ of $\|\Ra f\|$, to $\Ra f$. We measure different percentages of added noise to the reconstructed $f$ depending on the frequency distribution of $f$, i.e., on the ratio in \r{3.15}. Images with mostly lower frequencies suffer from noise more. On the other hand, given the a priori knowledge of their frequency band, that noise can be filtered out, unless we are looking for small high frequency detail in an overly lower frequency image. We chose non-negative $f$'s in that figure only. Numerical experiments with $f$ of mean value zero show lower added noise on a few examples. If in Figure~\ref{fig_Rf/f}(c) we allow random positive and negative amplitudes as well, for example (not shown), the added noise in (g) drops to $41\%$. 
\begin{figure}[ht]\,\hfill
\begin{subfigure}[t]{.22\linewidth}
\centering
\includegraphics[height=.15\textheight]{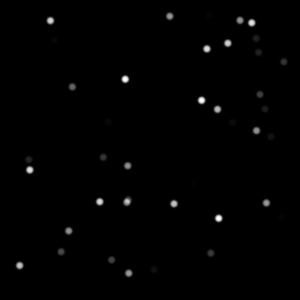}
\caption{$f_1$ }
\end{subfigure}\hfill
\begin{subfigure}[t]{.22\linewidth}
\centering
\includegraphics[height=.15\textheight]{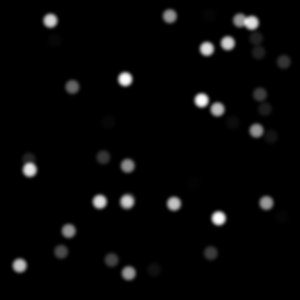}
\caption{$f_2$}
\end{subfigure}\hfill
\begin{subfigure}[t]{.22\linewidth}
\centering
\includegraphics[height=.15\textheight]{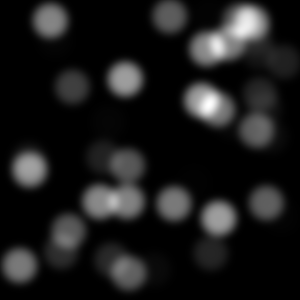}
\caption{$f_3$}
\end{subfigure}\hfill
\begin{subfigure}[t]{.22\linewidth}
\centering
\includegraphics[height=.15\textheight]{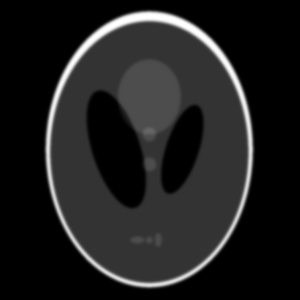}
\caption{$f_4$}
\end{subfigure}\hfill \,\\ \vspace{10pt}
\,\hfill
\begin{subfigure}[t]{.22\linewidth}
\centering
\includegraphics[height=.15\textheight]{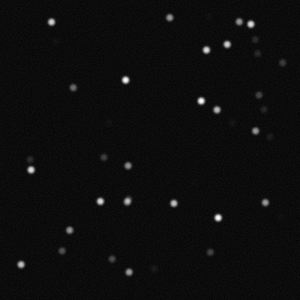}
\caption{$25\%$, $0.66$}
\end{subfigure}\hfill
\begin{subfigure}[t]{.22\linewidth}
\centering
\includegraphics[height=.15\textheight]{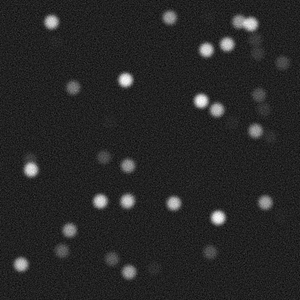}
\caption{$39.8\% $, $1.08$}
\end{subfigure}\hfill
\begin{subfigure}[t]{.22\linewidth}
\centering
\includegraphics[height=.15\textheight]{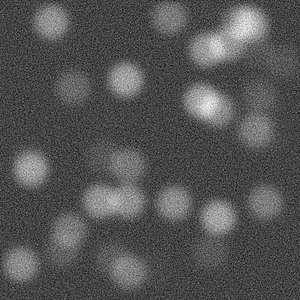}
\caption{$74.4\%$,$2.06$ }
\end{subfigure}\hfill
\begin{subfigure}[t]{.22\linewidth}
\centering
\includegraphics[height=.15\textheight]{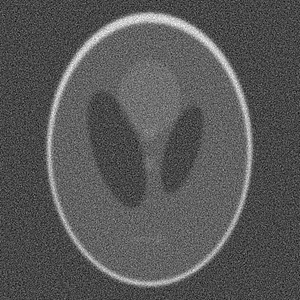}
\caption{$79.8\%$, $2.18$ }
\end{subfigure}\hfill \,
\caption{Top: Four different choices of $f\ge0$, $N=300$. Bottom: 
 $f$ reconstructed with $20\%$ noise added to $\Ra f$. The numbers show the added noise to $f$, and $\|\Ra f\|/\|f\|$.
}\label{fig_Rf/f}
\end{figure}
It is worth mentioning that with many conventional images, the values we are getting are close. In fact, statistically, such images share similar power spectra distributions \cite{van1996modelling}. 

Therefore, measuring the sensitivity of a particular inversion to noise this way can be quite misleading. The added noise to the image depends on the noise ratio \r{3.13} which in turn depends on the grid chosen to discretize $\Ra f$; and also depends on the choice of the test image.

\section{The Radon transform $\Ra$ in the plane in fan-beam coordinates} \label{sec_FB}
\subsection{$\Ra$ in fan-beam coordinates} 
We parametrize $\mathcal{R}$  by the so-called fan-beam coordinates. Recall \r{3.0a}. 
Each line is represented by an initial point $R\omega(\alpha)$ on the boundary of $B(0,R)$, where $f$ is supported, and by an initial direction making angle $\beta$ with the radial line through the same point,  see Figure~\ref{fig_FB}. It is straightforward to see that this direction is given by  $\omega(\alpha+\beta)$. Then the lines through $B(0,R)$ are given by 
\be{R2a}
x\cdot\omega(\alpha+\beta-\pi/2) = R\sin\beta, \quad \alpha\in [-\pi,\pi], \; \beta\in [-\pi/2,\pi/2].
\ee

\begin{figure}[ht]
\begin{center}
	\includegraphics[page=3 ]{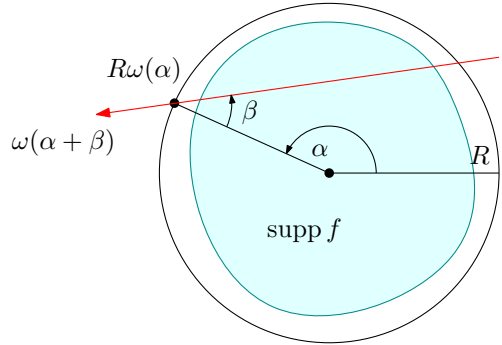}
\end{center}
\caption{\small The fan-beam coordinates.}
\label{fig_FB}
\end{figure}
The  canonical relation is $\kappa_\text{FB}=\kappa_+\cup \kappa_-$, where $\kappa_\pm$ are given by, see \cite{S-Sampling}, 
\[
\beta = \pm \sin^{-1} \frac{x\cdot\xi}{R|\xi|}, \quad \alpha = \arg\xi 
-\beta\pm \frac\pi2 \quad
\hat\alpha = x\cdot\xi^\perp, \quad   \hat\beta= \pm |\xi|\sqrt{R^2-(x\cdot \xi/|\xi|)^2}+\hat\alpha.
\]
Then $\kappa_\pm$ are isomorphic under the symmetry mentioned above lifted to the tangent bundle
\[
(\alpha, \beta, \hat\alpha,\hat\beta) \quad \longmapsto \quad (\alpha+2\beta-\pi , -\beta, \hat\alpha,2\hat\alpha-\hat\beta).
\]
The inverses $\kappa_\pm^{-1}$ are given by
\be{4.0c}
x = R\sin\beta \,\omega(\alpha+\beta-\pi/2) - \frac{\hat\alpha } { \hat\beta-\hat\alpha}R\cos\beta \,\omega(\alpha+\beta), \quad 
\xi =\frac{ \hat\beta-\hat\alpha}{R\cos\beta} \omega(\alpha+\beta-\pi/2) .
\ee
In particular, we recover the well known fact that $\kappa$ is 1-to-2, as in the previous case.

Set $(\varphi,p) = \Phi(\alpha,\beta)$,  where 
\[
\varphi =\alpha+\beta-\pi/2, \quad p=R\sin\beta.  
\]
We have $\det \d\Phi = R\cos\beta$. 
Then $\Ra_\text{FB}= \Ra\circ\Phi$. To compute $\Ra_\text{FB}^*\Ra_\text{FB}$, write 
\[
(\Ra_\text{FB} ^* \Ra_\text{FB} f, f) = \int |\Ra_\text{FB} f(\alpha,\beta)|^2\, \d\alpha\, \d\beta = \int |\Ra  f(\varphi,p)|^2\frac1{\cos\beta} \, \d\varphi\, \d p.
\]
Since $\sin\beta=p/R$, we have $\cos\beta= \sqrt{1-p^2/R^2 }$. Therefore,
\[
\Ra_\text{FB} ^* \Ra_\text{FB} = \Ra^* (1-p^2/R^2)^{-1/2} \Ra .
\]
The factor in the middle of the r.h.s.\ is a multiplication operator, and applying Egorov's theorem (one can actually do it even directly and without a remainder), one gets for  the principal symbols, at least,
\[
\sigma_p(\Ra_\text{FB} ^* \Ra_\text{FB}) = \bigg(1-\frac{(x\cdot\xi)^2}{R^2|\xi|^2}\bigg)^{-1/2}\sigma_p(\Ra^*  \Ra )= \frac{4\pi}{|\xi|}\bigg(1-\frac{(x\cdot\xi)^2}{R^2 |\xi|^2}\bigg)^{-1/2}. 
\]
The equivalent to \r{3.1} then is 
\be{4.3}
\gamma_{h^{1/2} \mathcal{R}_\text{FB}^{-1}g}(x,\xi)= \frac{|\xi|}{4\pi} \bigg(1-\frac{(x\cdot\xi)^2}{R^2 |\xi|^2}\bigg)^{1/2} \gamma_g\circ \kappa_\text{FB}(x,\xi), \quad \xi\not=0. 
\ee
Therefore, the noise spectral distribution depends on $x$ now, and it depends on the direction of $\xi$ relative to $x$. For $x, |\xi|$ fixed, it is maximized when $\xi\perp x$, and minimized when $\xi \parallel x$.

\subsection{Sampling} As above, if $\supp f\subset [-1,1]^2$ is sampled on an $N \times N$ grid, we have $B_{x_1}=B_{x_2}=N\pi/2 $. As before, set $B=\sqrt2 B_{x_1}=N\pi/\sqrt2$. Then we consider $f$ having $\WFH(f)$ in $B(0,R)\times B(0,\sqrt2 B)$ with $R=\sqrt 2$. The image of this product under the canonical map, projected to the dual variable $(\hat \alpha,\hat\beta)$ has the following smallest box containing it: $[-RB,RB]\times [-2RB, 2RB]$, see \cite{S-Sampling}. The means taking at least $2RB\times 2RB$, i.e., $2N\pi\times 2N\pi$ samples over the intervals indicated in \r{R2a}. Compared to \r{3.5N}, this requires $\pi$ times the number of samples, which makes it a less efficient sampling geometry, as  shown in \cite{S-Sampling}. 

In Figure~\ref{fig_ell}, we present a numerical experiment to validate \r{4.3}. We take $g$ to be  Gaussian noise and invert it with $\Ra_\text{FB}$. Then we crop a small rectangle in the top left corner and take the modulus of its Fourier Transform. Then $x$ is close to $x_0=0.8(-1,1)$ and the small black elongated oval in the center has a major axis along the same vector, formula \r{4.3} predicts. 

\begin{figure}[ht]
\begin{center}
	\includegraphics[scale=0.672]{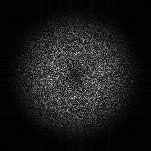}\hspace{50pt}
	\includegraphics[trim = 0 4 0 0  ,scale=0.26]{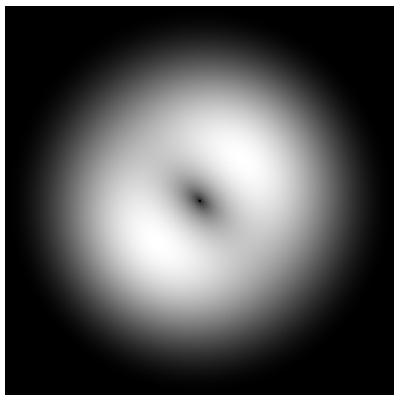} 
\end{center}
\caption{\small Spectral density of the noise in $f$ with the Hann filter. 
Left: measured in the top left corner. Right: theoretical profile \r{4.3} at that corner.}
\label{fig_ell}
\end{figure}

\subsection{Noise Ratio} 
We study the noise ratio with a filtered inversion. In {\tt ifanbeam} in MATLAB, for example, $\Ra_\text{FB}f$ is converted to parallel coordinates and the filter is applied after that. By \r{4.0c}, the filter $\nu(\hat p)$, with $\nu$ even, takes the form $F:= \nu\big( |\hat\beta-\hat\alpha|/(RB\cos\beta)\big) $, where $B$ is the band limit of $|\xi|$. The inversion operator then is $\Ra_{\text{FB},\nu}^{-1} =\Ra_{\text{FB}}^{-1}F $ which equals $( \Ra_{\text{FB},\nu}^* \Ra_{\text{FB},\nu} )^{-1}\nu(|D|)  \Ra_{\text{FB}}^*$ modulo lower order operators by Egorov's theorem. We get, similarly to \r{3.9} that  \r{4.3} modifies as 
\[
\gamma_{h^{1/2} \mathcal{R}_{\text{FB},\nu} ^{-1}g}(x,\xi)= \frac{|\xi|}{4\pi} \bigg(1-\frac{(x\cdot\xi)^2}{R^2 |\xi|^2}\bigg)^{1/2} \nu_0^2(|\xi|/B) \gamma_g\circ \kappa_\text{FB}(x,\xi), \quad \xi\not=0. 
\]

Assume that $g$ is oversampled (related to $B$, see \cite{S-Sampling} for the sampling requirements),  and it is white noise. Then the variance at a point, see \r{9a} is given by
\[
\begin{split}
\VAR_x(h^{1/2} f) &= \gamma^\sharp\int_{|\xi|\le B} \frac{|\xi|}{4\pi} \bigg(1-\frac{(x\cdot\xi)^2}{R^2 |\xi|^2}\bigg)^{1/2} \nu_0^2(|\xi|/B_p)\,\d\xi,\\ 
&= \frac{ \gamma^\sharp B^3}{4\pi}\int_0^{2\pi } \int_0^1\bigg(1-\frac{|x|^2}{R^2}\cos^2\theta \bigg)^{1/2} \rho^2 \nu_0^2(\rho)\,\d\rho\, \d\theta\\
&= \frac{ \gamma^\sharp B^3}{6}\frac{c_\nu}{2\pi} \int_0^{2\pi }  \bigg(1-\frac{|x|^2}{R^2}\cos^2\theta \bigg)^{1/2}    \, \d\theta,
\end{split}
\]
compare with \r{3.12}. The integral is of elliptic type and varies between $2\pi$, when $|x|=0$, and $4$ when $|x|=R$. To connect this to \r{3.12}, the integrand in \r{3.12} there corresponds to $|x|=0$ formally; and then we get \r{3.12}. Taking a square root, we see that the standard deviation would be higher in the center, the same as in the parallel geometry case, and will decrease slightly to about $80\%$ at $|x|=R$, which corresponds to the four corners of the square in our numerical simulations. 

Similarly to  \r{3.5} and \r{3.12}, we integrate over $x$ in the inscribed disk $|x|\le 1$ in $[-1,1]^2$ and divide by its area $\pi$ to get the variation in that disk. Then $R=\sqrt2$ and
\[
\VAR_{B(0,1)}( f)   \approx \frac{2.93}\pi \frac{ \gamma^\sharp B^3}{6h} c_\nu  \approx {0.9328}  \frac{ \gamma^\sharp B^3}{6h} c_\nu.
\]
This is within 6-7\% of the parallel geometry variance, and about $3\%$ difference for the standard deviation.


\section{Non-additive noise} \label{sec_NA_noise}
In this section we discuss some types of non-additive noise. The exposition here will be more sketchy, we will point out how to fit those cases into the general framework we developed but will not go into detail.

\subsection{Multiplicative noise} \label{sec_7.1}
Assume the data $g=Af$ is subject to a multiplicative noise. This can happen if the detectors are not perfectly calibrated and each one reports a signal somewhat larger or smaller than it should be (non-uniform response). In imaging systems, photo response non-uniformity (PRNU) is an example of such noise. 
{A generic way to model the kind of multiplicative noise we have in mind is the following: consider a sequence of discrete noise samples $\{\bm{w}_{k,h}; \, h>0, k\in K(h)\}$, where 
\be{MN0}
\bm{w}_{k,h} = 1 + \bm{f}_{k,h},
\quad\text{or}\quad
\bm{w}_{k,h} = \exp\left( \bm{f}_{k,h} \right),
\ee
and $\{\bm{f}_{k,h}; \, h>0, k\in K(h)\}$ is the white noise considered in Hypothesis~\ref{hyp:noise}. Then we set
}
\be{MN1}
g_\text{noise}(x) =  \sum_{k\in K(h)} \bm w_{k,h} g(shk) \chi_k(x), \quad \chi_k(x)=  \chi\Big(\frac{1}{sh}(x-shk)\Big)  , 
\ee
where $\bm w_{k,h} $ are the discrete noise samples, playing the role of $w_\text{\rm noise}$ above, and $g$ is the noise-free continuous signal. 
{We will compare the noise $g_h(x)$ defined by \r{MN1}, i.e., by the first formula in \r{MN0} (the second one can be treated similarly and one has to take into account that $\bm{w}_{k,h}$ is not necessarily centered), to a noise of the form 
}
\be{MN2}
\tilde g_\text{noise}(x) :=  \sum_{k\in K(h)} \bm w_{k,h} g(x) \chi_k(x) = g(x)  \sum_{k\in K(h)} \bm w_{k,h} \chi_k(x).
\ee
We have
\[
 \tilde g_\text{noise}(x)-g_\text{noise}(x) =  \sum_{k\in K(h)} \bm w_{k,h}\left( g(x)- g(shk)\right) \chi_k(x). 
\]
Since
\[
|g(x)- g(shk)|\le C|x- shk|,
\]
with $C\le C_0\|\nabla g\|_{L^\infty}$, we get
\[
\left| \left( g(x)- g(shk)\right) \chi_k(x)\right|\le Csh\max(|x||\chi(x)|)\le C'h. 
\]
That factor $h$ allows us to estimate, using Proposition~\ref{pr_P}, the error when replacing $g(shk)$ in \r{MN1} by $g(x)$. We would get an $O(h)$ error. 
The problem here is that we want to apply this to $g=Af$, all dependent on $h$, and in general, $\nabla Af$ grows like $h^{-1}|Af|$. This cancels the decay above. If we oversample a lot, the error will be ``small''. Also, in regions with $\WFH(Af)$ far away from the Nyquist limit, that term will be small. If we ignore it for a moment, the noise added to $Af$ is $\tilde g_h$ given by \r{MN2}. It is white noise as above but multiplied by $g=Af$. The defect measure of the noise added to the data then is like in \r{de6b} with the additional factor $|Af(x)|^2$. 

One important case which allows to overcome the difficulty above is when $g=\psi_h*g_0$, where $\psi_h(x) = h^{-n}\psi(x/h)$ with $\int\psi=1$ (a  Friedrichs mollifier) with $\hat\psi\in C_0^\infty$. This corresponds to averaged measurements of an $h$-independent function $g_0$. We refer to \cite{S-Sampling} for the sampling theory for such measurements. 
Then $\nabla g=\psi_h*\nabla g_0$, and assuming $g_0\in C^1$ (either $h$-independent or uniformly bounded there   in $h$), we have $|\nabla g|\le C$ (rather than $C/h$), which is the estimate we needed in the previous paragraph. Then the machinery we developed works and we need to multiply the noise measure by the additional factor $|g_0(x)|^2$ in \r{de6b}, i.e., we get there
\[
\d\mu_{g_\text{noise}}(x,\xi) = \frac{s^n}{(2\pi)^n}\sigma^2 |g_0(x)|^2 | \hat\psi (\xi)   \hat\chi(s\xi)|^2\,\d x\,\d\xi
\]
under the assumption $g=\psi_h*g_0$ in \r{MN1}. Then \r{3.12} takes the form 
\be{MN5}
\begin{split}
\VAR^0_x(h^{1/2} \mathcal{R_\chi}^{-1}g_\text{noise}) &  =  \frac{1}{4\pi}\gamma^\sharp  \int_{|\xi|<B} \big|\Ra f\big(x\cdot\xi/|\xi|, \arg(\xi)\big )  \big|^2  |\xi|| \nu_0(|\xi|/B)  \hat\psi (\xi)  |^2 \,\d\xi\\
& = \frac{1}{4\pi}\gamma^\sharp  \int_0^{2\pi} \int_0^B \big|\Ra f\big(x\cdot \theta, \theta\big )  \big|^2   \rho^2 |\hat\psi(\rho\theta)|^2 \nu_0^2(\rho/B)\,\d\rho\, \d\theta. 
\end{split}
\ee
This shows that the standard deviation of the noise at $x$ depends in particular on the line integrals of $f$ along lines thorough $x$. Line integrals with large values would create stronger noise at $x$.

\subsection{Modeling noise in CT scan} 
In CT scan tomography, what is {measured} is the attenuation along each ray. If $I_0$ is the initial intensity, and $I$ is the one after the ray crosses the object, then the measurement is $I = \exp(-\Ra f)I_0$, by the Beer-Lambert law. Assuming an additive noise $g_\text{noise}$, we measure $I_\text{noisy} = \exp(-\Ra f)I_0+ g_\text{noise}$. If we invert this the same way as if there were no noise (which may not be the best strategy), we would get
\[
f_\text{noisy} = -\Ra^{-1}\log(I_\text{noisy}/I_0)= -\Ra^{-1}\log(\exp(-\Ra f)+ g_\text{noise}/I_0).
\]
Obviously, increasing $I_0$ will decrease the effect of the added noise but in many applications this is not desirable and/or the noise level may depend on $I_0$. We take $I_0=1$, i.e., $g_\text{noise}$ is the added noise relative to $I_0$. Then 
\[
\begin{split}
-\log(\exp(-\Ra f)+ g_\text{noise}) &= -\log\left(\exp(-\Ra f)(1+ \exp(\Ra f)g_\text{noise})\right)\\
& = \Ra f - \log\left(1+ \exp(\Ra f)g_\text{noise} \right).
\end{split}
\]
Therefore,
\be{CT2}
f_\text{noisy} = f - \Ra^{-1} \log\left(1+ \exp(\Ra f)g_\text{noise} \right)
\ee
If the noise is small enough, we can pass to a linearization to get 
\be{CT3}
f_\text{noisy} \approx f- \Ra^{-1}\left(\exp(\Ra f)g_\text{noise} \right). 
\ee
This is the multiplicative noise model above with $g w_\text{noise}$ replaced by $e^g w_\text{noise}$. In \r{MN5}, for example, the factor $|\Ra f|^2$ would be replaced by $\exp(2\Ra f)$.  

\subsection{Modeling Poisson noise} In SPECT, we measure the attenuated X-ray transform but the particle count at each detector is low. In this case, the predominant noise is of Poisson type: the number of particles at each detector is randomized by a Poisson distribution with probability of taking value $k$ being $P(k,\lambda) = e^{-k}\lambda^k/k!$, where $\lambda\ge0$ is the expected value at that detector, see  \cite[sec.~4.5]{Papoulis2002}. 
 Both the expected value and the variance of $P$ equals $\lambda$. Then the particle count at each detector equals $\lambda+w\sqrt{\lambda}$, where $w$ is a random variable with zero expected value and variance $1$. Note that the probability distribution of $w$ depends on $\lambda$ and approximates a Gaussian one when $\lambda\gg1$ and they are independent. Assuming locally averaged measurements, as above, we would get added noise $\bm w_{k,h}|\psi_h*\Ra f|^{1/2}$ when the units for $\Ra f$ are the number of particles; and $\alpha$ times that in general with some $\alpha>0$. Note that $\bm w_{k,h}$ are not identically distributed (but Theorem~\ref{thm_m} still applies) and  are well approximated by Gaussian  distributions when $\Ra f$ is not very small. The microlocal measure then would have the factor  $\alpha \Ra f$ (we assume $f\ge0$, thus $\Ra f\ge0$). This is similar to multiplicative noise, where the factor was proportional to $|\Ra f|^2$. 

\subsection{Numerical examples} 
We present numerical simulations with the three types of non-additive noise in Figures~\ref{fig_multSL} and \ref{fig_multD}. The phantoms are the Shepp-Logan one and three disks of different size and intensity, not shown there,  both phantoms having ranges between $0$ and $1$. They are both rendered on a $300\times300$ grid discretizing the square $[-1,1]^2$. Their Radon transforms are computed with 1,884 angular steps and $600$ steps in the $p$ variable covering the diagonal of the square. To simulate multiplicative noise, we choose the variance of $\bm w$ in \r{MN1} to be $0.2$. To simulate CT noise, we use the non-linear model \r{CT2} (rather than the linearization \r{CT3}) with $\VAR(g_\text{noise})=0.03$. In the Poisson noise case, each value of $\Ra f$ is randomized as follows: $\texttt{poissrnd}(80*\Ra f)/80$; it is worth noticing that $\Ra f$ ranges from $0$ to $0.51$ in the Shepp-Logan case and to $0.56$ in the disks case. We chose the noise parameters so that the noise would be of similar strength, visibly,  in all three cases, and the distribution is Gaussian.  Hann filter is applied to the inversion. 

\begin{figure}[ht]
\begin{center}
	\includegraphics[scale=0.3]{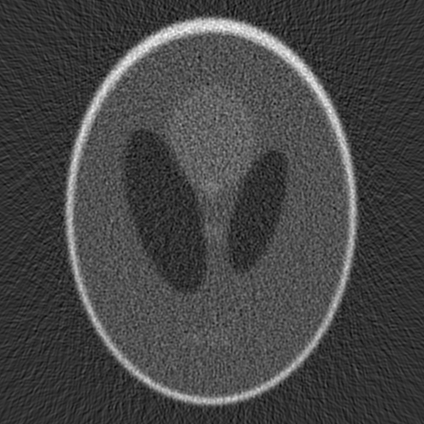}\hspace{20pt}
\includegraphics[scale=0.3]{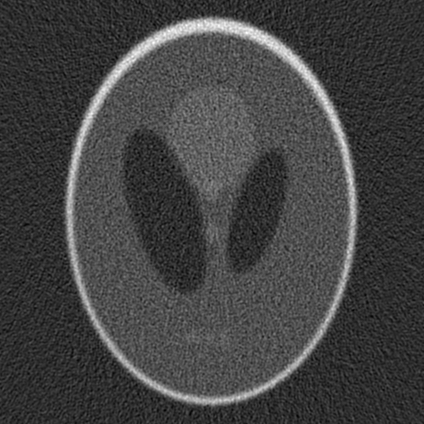}\hspace{20pt}
\includegraphics[scale=0.3]{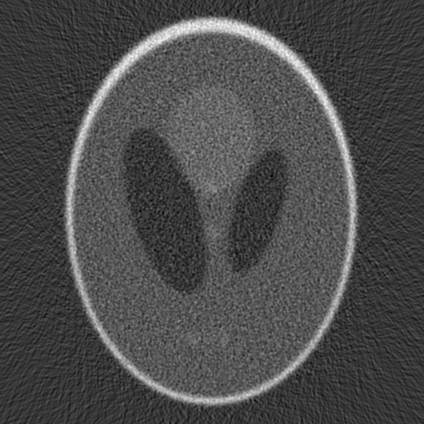}\hspace{20pt}
\end{center}
\caption{\small Shepp-Logan with the Hann filter and with (a)   multiplicative noise; (b) CT type of noise; (c) Poisson noise} 
\label{fig_multSL}
\end{figure}

\begin{figure}[ht]
\begin{center}
	\includegraphics[scale=0.3]{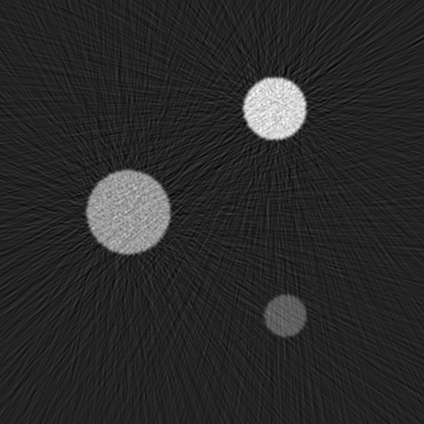}\hspace{20pt}
\includegraphics[scale=0.3]{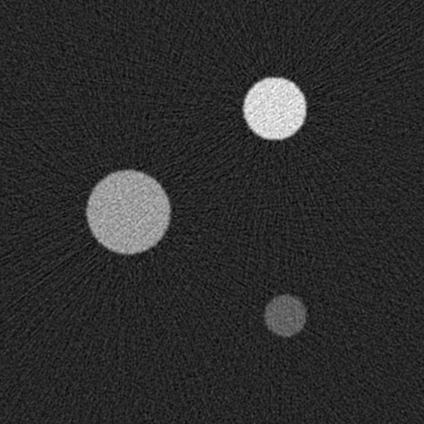}\hspace{20pt}
\includegraphics[scale=0.3]{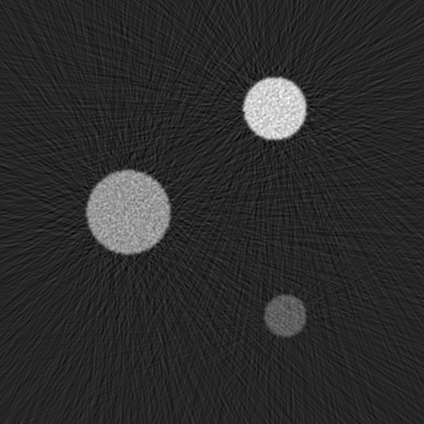}\hspace{20pt}
\end{center}
\caption{\small Three disks with the Hann filter and with (a) multiplicative noise; (b) CT type of noise; (c) Poisson noise} 
\label{fig_multD}
\end{figure}

Note that the noise has different character compared to Figure~\ref{fig_Rf/f}(h), for example, and one can see   individual lines (more precisely, line segments) in it. The multiplicative and the Poisson noise characters are somewhat similar; while the CT noise in the middle looks different. Our analysis shows that in the latter case, the standard deviation of the added noise in the linearization regime has range from $e^0=1$ to about $e^{0.5}\approx 1.65$ times $\STD(g_\text{noise})$, see \r{CT3}, while in the other two cases, the range is from $0$ to a certain positive constant, which allows for almost zero noise locally before inversion. For this reason, individual lines are harder to distinguish in the CT case. 
 
\section{Discrete noise and its power spectrum}\label{sec_8}
In this section, we analyze discrete white noise directly, without converting it to a continuous function. Here, $f(k)$ is a random vector on an $N\times \dots\times N$ grid which we denoted by $\bm f_k$ before. We will denote by $\delta(k)$ the discrete delta function on $\mathbf{Z}^n$. In section~\ref{sec_D_1}, we follow mainly  \cite[Chapter~12]{Papoulis2002}, where $f$  is a random variable depending on a (continuous) variable $t$; but most of it adapts to the discrete setting easily. We do a temporal analysis of the power spectrum for each fixed (discrete) frequency, with $N$ fixed. We show that the spectrum of white noise is flat in the sense of expected value over repeated experiments, and we consider more general noise. On the other hand, for each experiment, the spectrum is quite, well, noisy and does not appear to smoothen as $N\to\infty$ numerically. 
In the second part, we study the ergodic properties of the power spectrum, with a single experiment, as $N\to\infty$. We show in Theorem~\ref{thm_d1} that the power spectrum is flat \textit{on average}. That theorem is an analog of Theorem~\ref{thm_m}. 

We want to emphasize that $f=\{f(k)\}$ depends on $N$, so we have a ``triangular'' array of random variables depending  on the random outcome and increasing their size with $N$.

\subsection{Temporal analysis}\label{sec_D_1}\label{sec_D} 
The discrete analog of the Fourier transform is the Discrete Fourier Transform (DFT) described below. It lives naturally on the discrete torus  $\mathbf{T}^n_N = \mathbf{Z}^n/N\mathbf{Z}^n$ with period $N$. This shows that any time the DFT is used for spectral analysis, the original $f$ is actually regarded as the restriction of a periodic function on a fundamental domain. 
We consider   $f:\mathbf{T}^n_N  \to \mathbf{C}$, and we denote $f=\{ f(k) \}$;  with each element $f(k)$, $k=(k_1,\dots,k_n)\in  \mathbf{T}^n_N$ a random variable in the same probability space. 
In other words, $m$ varies on the $n$-dimensional discrete torus with $N$ points in each variable. First, $N$ will be fixed but eventually, we will take $N\gg1$.  We denote by $fg$ the vector defined by $(fg)(k)=f(k)g(k)$, i.e., this is the multiplication of the functions of a discrete argument. Similarly, $|f|$ is the vector with components $|f(k)|$, while $\|f\|$ is the norm of $f$. 

We define the (unitary) Discrete Fourier Transform (DFT) $\hat f=\mathcal{F}f$ by
\[
\hat f(k^*)= \frac1{N^{n/2}}\sum_{k\in \mathbf{T}^n_N}  f(k) e^{-2\pi \i k \cdot k^*/N}, \quad k^*\in \mathbf{T}^n_N .  
\]
Its inverse is the adjoint one 
\[
f(k)= \frac1{N^{n/2}}\sum_{k^*\in \mathbf{T}^n_N} \hat f(k^*) e^{2\pi \i k\cdot k^*/N}, \quad k\in  \mathbf{T}^n_N . 
\]
Parseval's equality takes the form
\[
f\cdot g=  \hat f\cdot\hat g
\]
for complex-valued $f$ and $g$, where the dot-product is the natural one in $\mathbf{C}^N$. In particular, $\mathcal{F}$ is unitary. There is a natural (circular) convolution $f*g$ defined, and we have
\[
\mathcal{F}(f*g)= N^{n/2} \hat f \hat g, \quad \mathcal{F}(f  g)= N^{-n/2} \hat f* \hat g.
\]
Next, we have
\be{2.0.1}
\mathcal{F}\delta=N^{-n/2}, \quad \mathcal{F}(1)= N^{n/2}\delta. 
\ee

For each $f$ with random entries, as above,  define the \textit{auto-correlation }
\[
\acor_f(m,k)=\mathbb{E}\{f(m) \bar f(k)\}. 
\]
The \textit{auto-covariance} is defined  as the auto-correlation  of the centered $f$, i.e., of $f-\mathbb{E}(f)$, and it is easy to see that
\[
\acov_f(m,k)= \acor(m,k)-\mathbb{E}\{f(m)\}  \overline{\mathbb{E}\{f(k)\}}.
\]

The process $f$ is called \textit{stationary},\footnote{this terms comes from 1D processes, where $x$ is the time} if $\acor_f(m,k)$ is a function of $m-k$ only:
\be{2.1.3}
\acor_f(m,k)=\acor_f(m-k),
\ee
where, with some abuse of notation, we used the same notation $\acor$ on the right. A process $f$ is called \textit{white noise} if 
\be{2.1.4}
\acor_f(m,k)=0 \quad \text{for $m\not=k$}. 
\ee
Then we must have
\be{2.1.5}
\acor_f(m,k)=\sigma^2(m)\delta(m-k) 
\ee
with $\sigma^2(m)=\VAR(f(m))\ge0$. We always assume that white noise has a zero mean. The process is wide-sense stationary (WSS) if it is stationary and its mean is constant. Then white noise is WSS if $\sigma$ is constant. Note that WSS does not mean that $f(m)$ are independent from each other but if they are independent, they are uncorrelated, i.e., \r{2.1.4} holds.

Let $\Gamma(m^*,k^*)$ be the  DFT of the auto-correlation of $f$, see \r{2.1.3}, with respect to $(m,k)$:  
\[
\Gamma(m^*,k^*)= \mathcal{F}(\acor_f)(m^*,k^*).
\]
 Then
\[\begin{split}
\mathbb{E}\big\{ \hat f(m^*) \bar{\hat f}(k^*)\big \}= \mathbb{E} \frac1{N^n}\sum_{m,k }  f(m)\bar f(k)e^{-2\pi \i (m\cdot m^*-k \cdot k^*  )/N}
= \Gamma(m^*,-k^*).
\end{split}
\]

In case of white noise satisfying \r{2.1.5}, we have $\Gamma(m^*,k^*)=N^{-n/2}\widehat{\sigma^2}(m^*+k^*)   $, thus we recover Theorem~11.2 in  \cite{Papoulis2002}:
\[
\mathbb{E}\big\{ \hat f(m^*+k^* ) \bar{\hat f}(m^*)\big \}= N^{-n/2}\widehat{\sigma^2}(k^*), \quad \forall m^*. 
\]
This shows that even when $f$ is not stationary,  $\hat f$ is stationary with auto-correlation $\widehat{\sigma^2}$. If $\sigma=\text{const.}$, then each $f(m)$ has standard deviation $\sigma$, and $N^{-n/2}\widehat{\sigma^2}=\sigma^2 \delta$, i.e.,
\be{d1-2}
\mathbb{E}\big\{ \hat f(k^* ) \bar{\hat f}(m^*)\big \}= \sigma^2  \delta(k^*-m^*).
\ee
In particular, $\mathbb{E}\{ |\hat f(m^*)|^2\}=  \sigma^2 $ for all $m$, which shows a flat (expectation of a) spectrum. By Theorem~11.3 in  \cite{Papoulis2002}, if $f$ is real and Gaussian, then the covariance of $|\hat f(m^*)|^2$ and $|\hat f(k^*)|^2$ equals $N^{-n} (\widehat{\sigma^2})^2(m^*+k^*)+ N^{-n} (\widehat{\sigma^2})^2(m^*-k^*)$, as we also show below. In particular, if $\sigma=\text{const.}$ in \r{2.1.5}, we get covariance $\sigma^4  (\delta(m^*+k^*)+\delta(m^*-k^*))$. Therefore, they are correlated when $k^*=m^*$ and $k^*=-m^*$ (because  $\hat f$ is even) with standard deviation $\sigma^4 $   for each Fourier coefficient except for the zeroth one when it is $2\sigma^4$. 
 In fact, we do not need $f$ to be Gaussian to have the same conclusion on asymptotic sense. We assume $f$ real from now on.

\begin{proposition}\label{pr_d}
Let $f$ be real valued white noise with a  finite fourth moment called $\mu_{4}$. Then
\be{pr81}
\acov\left\{|\hat f(k^*)|^2, |\hat f(m^*)|^2 \right\}= \sigma^4 \delta(k^*- m^*)+\sigma^4 \delta(k^*+ m^*) + \frac{\mu_4-3\sigma^4}{N^n}.
\ee
\end{proposition}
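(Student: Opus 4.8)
The plan is to compute $\mathbb{E}\{|\hat f(k^*)|^2\,|\hat f(m^*)|^2\}$ by brute force and subtract $\mathbb{E}\{|\hat f(k^*)|^2\}\,\mathbb{E}\{|\hat f(m^*)|^2\}$, which by \r{d1-2} equals $\sigma^4$ for every pair $k^*,m^*$. First I would use that $f$ is real, so $\bar{\hat f}(k^*)=\hat f(-k^*)$, and write out the DFT definition to get
\[
|\hat f(k^*)|^2\,|\hat f(m^*)|^2=\frac1{N^{2n}}\sum_{a,b,c,d\in\mathbf{T}^n_N} f(a)f(b)f(c)f(d)\; e^{-2\pi\i[(a-b)\cdot k^*+(c-d)\cdot m^*]/N}.
\]
Taking expectations, everything reduces to the mixed fourth moments $\mathbb{E}\{f(a)f(b)f(c)f(d)\}$, and here I would use independence (as in Hypothesis~\ref{hyp:noise}), mean zero, common variance $\sigma^2$ and common fourth moment $\mu_4$, to invoke the elementary identity
\[
\mathbb{E}\{f(a)f(b)f(c)f(d)\}=\sigma^4\big(\delta(a-b)\delta(c-d)+\delta(a-c)\delta(b-d)+\delta(a-d)\delta(b-c)\big)+(\mu_4-3\sigma^4)\,\delta_{abcd},
\]
where $\delta_{abcd}=1$ iff $a=b=c=d$ and $0$ otherwise; this is verified by running through the finitely many coincidence patterns among $a,b,c,d$, the term $-3\sigma^4$ compensating the triple overcount on the full diagonal.

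Next I would substitute this decomposition into the quadruple sum and evaluate the four pieces using the orthogonality relation $\sum_{a\in\mathbf{T}^n_N}e^{-2\pi\i a\cdot j/N}=N^n\delta(j)$, i.e.\ \r{2.0.1}. The $\delta(a-b)\delta(c-d)$ piece trivializes the phase and contributes $\sigma^4$; the $\delta(a-c)\delta(b-d)$ piece forces the phase to be $e^{-2\pi\i(a-b)\cdot(k^*+m^*)/N}$ and contributes $\sigma^4\delta(k^*+m^*)$; the $\delta(a-d)\delta(b-c)$ piece gives, symmetrically, $\sigma^4\delta(k^*-m^*)$; and the diagonal piece contributes $(\mu_4-3\sigma^4)N^{-n}$ after cancelling $N^{-2n}$ against the single surviving sum of $N^n$ terms. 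Subtracting $\sigma^4$ then yields \r{pr81}.

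There is no genuine obstacle here; the statement is proved by a direct computation, and the only points needing care are: (i) using that $f$ is real to get $\bar{\hat f}(k^*)=\hat f(-k^*)$, which is precisely what produces the second Dirac mass $\sigma^4\delta(k^*+m^*)$; (ii) getting the fourth-moment decomposition with the correct cumulant correction $\mu_4-3\sigma^4$; and (iii) bookkeeping the normalization factor $N^{-2n}$ against the powers of $N$ generated by the orthogonality relation. As a consistency check, in the Gaussian case $\mu_4=3\sigma^4$, the last term vanishes, and one recovers the classical Wick-type formula referenced from \cite{Papoulis2002}; moreover, specializing $k^*=m^*$ recovers the variance $\VAR\{|\hat f(k^*)|^2\}=\sigma^4(1+\delta(2k^*))+(\mu_4-3\sigma^4)N^{-n}$, consistent with the discussion preceding the proposition.
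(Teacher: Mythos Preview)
Your proof is correct and follows essentially the same approach as the paper: both expand $|\hat f(k^*)|^2|\hat f(m^*)|^2$ as a quadruple sum, evaluate $\mathbb{E}\{f(a)f(b)f(c)f(d)\}$ by coincidence patterns, and collapse the exponential sums via orthogonality. Your use of the cumulant-style identity for the fourth moment packages the diagonal overcounting more cleanly than the paper's explicit case enumeration with compensating $-\sigma^4/N^n$ terms, but the substance is identical.
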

\begin{proof} We have 
\[
\begin{split}
\acov &\left\{|\hat f(k^*)|^2, |\hat f(m^*)|^2 \right\}\\
&=   \frac1{N^{2n}}\sum_{m_1,m_2,k_1,k_2 }\mathbb{E}\{f(k_1)f( k_2)f(m_1)f( m_2)\}  e^{-2\pi \i ((k_1-k_2) \cdot k^* +(m_1 -m_2 )\cdot m^*  )/N}- \sigma^4.
\end{split}
\]
The only non-zero expectation terms are those with two (equal or not) pairs of equal indices. Assume first that $k_1=k_2$, $m_1=m_2$. Then the expectation term equals $\sigma^4$ if $m_1\not=m_2$, and the fourth moment $\mu_4$, when $ k_1=k_2=m_1=m_2$. The latter number of terms is $N^n$, while the former is $N^{2n}-N^n$. Therefore,  
this set of indices contributes
\[
\left(1-\frac1{N^n}\right)\sigma^4 +  \frac1{N^n}\mu_4
\]
to the sum. 

Consider the terms with $k_1=m_1$, $k_2=m_2$. Then the corresponding sum is
\[
\begin{split}
\frac{\sigma^4 }{N^{2n}} \sum_{k_1=m_1\not= k_2=m_2 } e^{-2\pi \i (k_1-k_2) \cdot( k^* + m^* )/N}
 &= \frac{\sigma^4 }{N^{2n}}\sum_{k,m} e^{-2\pi \i k \cdot( k^* + m^* )/N} -\frac{\sigma^4 }{N^{n}} \\
&=\sigma^4 \delta (k^*+ m^*) -{\sigma^4 }/{N^{n}} .
\end{split}
\]
We performed the change $k=k_1-k_2$, $m=k_2$ above, used \r{2.0.1} and compensated for the added terms corresponding to $k=0$ in the second sum which are missing from the first one.

Finally, when $k_1=m_2$, $k_2=m_1$, the dot product in the phase function becomes $(k_1-m_1)\cdot(k^* - m^*)$ and the same argument gives us 
\[
\begin{split}
\frac{\sigma^4 }{N^{2n}}\sum_{k_1\not= m_1}  e^{-2\pi \i (k_1-m_1)\cdot(k^* - m^* )/N} &= \frac{\sigma^4 }{N^{2n}}\sum_{k\not=0} \sum_m e^{-2\pi \i k\cdot(k^* - m^* )/N}  \\
& =\sigma^4\delta(k^* - m^*) -{\sigma^4 }/{N^{n}} . 
\end{split}
\]
The analysis of those three cases completes the proof. 
\end{proof}
 
\begin{corollary}\label{corD}
If $f$ in Proposition~\ref{pr_d} is normal, then the last term in \r{pr81} vanishes. 
\end{corollary}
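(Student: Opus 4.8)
The plan is to observe that the only term in \r{pr81} that is not already a multiple of a Kronecker delta is the remainder $(\mu_4 - 3\sigma^4)/N^n$, so the corollary reduces to showing that $\mu_4 = 3\sigma^4$ whenever the common distribution of the $f(k)$ is Gaussian.

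Recall that, by the white noise hypothesis of Proposition~\ref{pr_d} together with the zero-mean convention, each $f(k)$ is a centered real random variable with variance $\sigma^2$ and finite fourth moment $\mu_4 = \mathbb{E}(f(k)^4)$. If this distribution is normal, then $f(k) \sim \mathcal{N}(0,\sigma^2)$, and the identity $\mathbb{E}(f(k)^4) = 3\sigma^4$ is classical. I would derive it by any of the standard routes: the Wick/Isserlis formula $\mathbb{E}(X_1 X_2 X_3 X_4) = \mathbb{E}(X_1 X_2)\mathbb{E}(X_3 X_4) + \mathbb{E}(X_1 X_3)\mathbb{E}(X_2 X_4) + \mathbb{E}(X_1 X_4)\mathbb{E}(X_2 X_3)$ with all four factors equal to $f(k)$; or four derivatives of the moment generating function $\mathbb{E}(e^{t f(k)}) = e^{\sigma^2 t^2/2}$ evaluated at $t=0$; or a direct integration by parts against the Gaussian density.

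Substituting $\mu_4 = 3\sigma^4$ into \r{pr81} makes the last term vanish, which is the assertion. There is no real obstacle here: the content of the corollary is precisely that the Gaussian is the distribution for which the covariance of the squared Fourier coefficients carries no uniform $N^{-n}$ background term, whereas for a generic admissible law the excess kurtosis $\mu_4 - 3\sigma^4$ is nonzero and does contribute such a term to \r{pr81}.
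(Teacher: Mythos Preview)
Your proposal is correct and takes exactly the same approach as the paper, which simply remarks that the proof follows from the well-known fact that $\mu_4 = 3\sigma^4$ for normal distributions. Your write-up just supplies more detail on how one verifies that moment identity.
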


The proof follows from the well know fact that $\mu_4=3\sigma^4$ for normal distributions.  

\begin{remark}
The results in Proposition~\ref{pr_d} can be interpreted as follows. Up to an error $O(N^{-n})$, we get auto-covariance $\sigma^4$ if $k^*=m^*\not=0$ and when $k^*=-m^*\not=0$ (symmetry, because $f$ is real), and 
$2\sigma^4$ if $k^*=m^*=0$. 
If we stay in a fundamental domain of the type $k_j\in \{0,1,\dots,N-1\}$ then the symmetry becomes $|\hat f(k^*)|^2 = |\hat f(N-k_1^*,\dots N-k_N)|^2$.
\end{remark}

\subsection{Ergodic analysis. Flatness of the power spectrum on average.} 
Let $\alpha$ be a locally Riemann integrable function on $\R^n$, periodic of period $1$ in each variable. Assume that $f$ is real valued. We are interested in the following linear functional
\[
\mu_N(\alpha) := \frac1{N^n} \sum_{k^*\in \mathbf{T}^n_N}\alpha(k^*/N) |\hat f(k^*)|^2.
\]
This is a discrete analog of \r{de6a}   with $p$ there depending on the dual variable only. It is a weighted (not normalized) average of the power spectrum. What we do there is effectively rescale the spectrum from the integer points in $[0,N-1]^n$ (and then extended by periodicity) to the ones with fractional components of the kind $k^*/N$, forming a dense set in $[0,1]^n$ asymptotically. In statistics, this is done routinely in the study of peridograms, and $k^*/N$ is replaced by a continuous variable. 

Assume a white noise process \r{2.1.5} with $\sigma=\text{const}$. Then $\mathbf{E}(\mu_N(\alpha))\to \sigma^2\int \alpha(\xi)\,\d \xi$, as $N\to\infty$ 
by \r{d1-2}, where the integration is taken over the continuous torus in $\R^n$ with period one.  

The random variables $|\hat f(k^*)|^2-\sigma^2$  have zero expectation, correlation  given by  Proposition~\ref{pr_d},  and variance $\sigma^4$. 
Write 
\[
\mu_N(\alpha)= \sigma^2\frac1{N^n} \sum_{k^*\in \mathbf{T}^n_N}\alpha(k^*/N) + \frac1{N^n} \sum_{k^*\in \mathbf{T}^n_N}\alpha(k^*/N)\left(|\hat f(k^*)|^2 - \sigma^2\right).
\]
The first term is a Riemannian sum. 
The  second term has zero expectation and variance
\[
 \frac{\sigma^4}{N^{2n}} \sum_{k^*\in \mathbf{T}^n_N} \left( |\alpha(k^* /N)|^2         + \alpha(k^* /N)     \bar \alpha(-k^* /N)     \right)+O\Big(\frac1{N^n}\Big)
\le\frac{C}{N^n}. 
\]
The error terms come from the cross terms which are products of $\sigma^4$ and $O(N^{-n})$, by Proposition~\ref{pr_d}. There are $N^{2n}$ of them. 
Therefore, we proved the following.

\begin{theorem}\label{thm_d1} Let $f(k)$ be a white noise process on $\mathbf{T}^n_N$  (depending on $N$), with variance $\sigma^2$ and a finite fourth momentum. Then for every Riemann integrable function  $\alpha$ on ${T}_N^n$ we have
\be{thm_d1:eq}
\mu_N(\alpha) \longrightarrow \sigma^2\int \alpha(\xi)\,\d \xi \quad \text{in mean square sense}, 
\ee
where the integral is taken over the torus in $\R^n$ with period one. 
\end{theorem}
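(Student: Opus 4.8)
\textbf{Proof plan for Theorem~\ref{thm_d1}.}
The plan is to decompose $\mu_N(\alpha)$ into its (deterministic) expected value plus a mean-zero fluctuation, show the first part is a Riemann sum converging to the desired integral, and show the fluctuation has variance $O(N^{-n})$. Concretely, I would first write
\[
\mu_N(\alpha)= \sigma^2\,\frac1{N^n}\sum_{k^*\in\mathbf{T}^n_N}\alpha(k^*/N)
+\frac1{N^n}\sum_{k^*\in\mathbf{T}^n_N}\alpha(k^*/N)\bigl(|\hat f(k^*)|^2-\sigma^2\bigr).
\]
For the first term, since $\alpha$ is periodic of period $1$ and Riemann integrable on the torus, the sum $N^{-n}\sum_{k^*}\alpha(k^*/N)$ is a Riemann sum over a grid of mesh $1/N$ and converges to $\int\alpha(\xi)\,\d\xi$; hence the first term converges (deterministically, hence a fortiori in mean square) to $\sigma^2\int\alpha(\xi)\,\d\xi$.

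Next I would handle the fluctuation term, call it $Z_N$. We have $\mathbb{E}(Z_N)=0$ because $\mathbb{E}(|\hat f(k^*)|^2)=\sigma^2$ by \r{d1-2}. To bound $\mathbb{E}(Z_N^2)$ I would expand it as a double sum and invoke Proposition~\ref{pr_d}, which gives the autocovariance of $|\hat f(k^*)|^2$ and $|\hat f(m^*)|^2$ as $\sigma^4\delta(k^*-m^*)+\sigma^4\delta(k^*+m^*)+(\mu_4-3\sigma^4)N^{-n}$. Substituting,
\[
\mathbb{E}(Z_N^2)=\frac1{N^{2n}}\sum_{k^*,m^*}\alpha(k^*/N)\alpha(m^*/N)\,\acov\bigl\{|\hat f(k^*)|^2,|\hat f(m^*)|^2\bigr\},
\]
which splits into the diagonal contribution $\sigma^4 N^{-2n}\sum_{k^*}|\alpha(k^*/N)|^2$, the anti-diagonal contribution $\sigma^4 N^{-2n}\sum_{k^*}\alpha(k^*/N)\alpha(-k^*/N)$, and the constant-covariance contribution $(\mu_4-3\sigma^4)N^{-3n}\sum_{k^*,m^*}\alpha(k^*/N)\alpha(m^*/N)$. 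Since $\alpha$ is bounded (being Riemann integrable on a compact set), the first two sums are each $O(N^n)$, so divided by $N^{2n}$ they are $O(N^{-n})$; the last has $N^{2n}$ terms each $O(1)$, so divided by $N^{3n}$ it is $O(N^{-n})$ as well. Therefore $\mathbb{E}(Z_N^2)\le C/N^n\to0$, which is exactly $L^2$ convergence of $Z_N$ to $0$. Combining the two parts gives \r{thm_d1:eq}.

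The only delicate point — and the one I would be most careful about — is the bookkeeping in the fluctuation variance: one must correctly identify that the $\delta(k^*\pm m^*)$ terms collapse the double sum to a single sum (giving $O(N^n)$ out of a possible $O(N^{2n})$, hence the $N^{-n}$ gain after the $N^{-2n}$ prefactor), while the $O(N^{-n})$ constant-covariance term is genuinely spread over all $N^{2n}$ index pairs but comes with an extra $N^{-n}$ that rescues it. The boundedness of $\alpha$ used here is where Riemann integrability (rather than mere $L^1$) matters. Everything else — the Riemann-sum convergence and the elementary $L^2$ estimate — is routine, so I would present those briefly and spend the bulk of the writeup making the three covariance contributions explicit.
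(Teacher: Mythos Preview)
Your proposal is correct and follows essentially the same approach as the paper: the same decomposition into a Riemann sum plus a mean-zero fluctuation, and the same use of Proposition~\ref{pr_d} to bound the variance of the fluctuation by $C/N^n$. Your writeup is in fact more explicit than the paper's about the three covariance contributions (diagonal, anti-diagonal, and the $O(N^{-n})$ constant term), which is all to the good.
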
 

Therefore, the measure $N^{-n}\sum_{ \mathbf{T}^n_N} |\hat f(m^*)|^2\delta(\xi-m^*/N)$ converges weakly to $\sigma^2\, \d\xi $ in mean square sense. In particular, if we take $\alpha$ to be the characteristic function of, say a box $U$ in $\mathbf{T}^n_N$, then the average of the power spectrum on $U$ tends to $\sigma^2$ in mean square sense. 

\subsection{More general noise} We assume now that the random variables $f_h(k)$ depend on $h$, have zero mean and have uniformly bounded fourth momenta but are not necessarily independent or equally distributed. If we assume \r{Acor_f}, then the power spectrum is expressed in Theorem~\ref{thm_m2}. One special but important case is when the auto-correlation is space independent (stationary, see \r{2.1.3}), then $\beta$ in \r{Acor_f} is independent of $x$ and we have 
 \be{Acor_f_d}
\acor( f_{h}(k),  f_h(k+m)) = \beta(m)
\ee 
with some $\beta(m)$. Then \r{thm_d1:eq} takes the form 
\[
\mu_N(\alpha) \longrightarrow \int \check\beta(\xi) \alpha(\xi)\,\d \xi \quad \text{in mean square sense}, 
\] 
where $\check\beta$ is as in \r{beta}. In other words, the limit measure is $\check\beta(\xi)  \,\d \xi$.

\subsection{Numerical examples}
We illustrate the temporal behavior of the spectrum first. In Figure~\ref{fig_power_sp_seq_2}, we take a random normally distributed vector $f$ with $N=200$ and variance $\sigma^2=1$. The power spectrum is plotted next to it. As we can see, it looks flat on average with mean value close to one but the variation is substantial. On the right we plot the histogram of the (spatial) standard deviation $\STD(|\hat f|^2)$ over $1,000$ experiments; it appears to have mean $1$. We recall that $\STD(|\hat f|^2)$ is the square root of 
\[
\VAR\big( |\hat f|^2\big)=\frac1{N^n} \sum_{k^*\in \mathbf{T}^n_N} \left(|\hat f(k^*)|^2-\sigma^2\right)^2.
\]
We have not proved a limit for it though. That would require estimating the auto-correlation of the summands above similarly to Proposition~\ref{pr_d}.

\begin{figure}[ht]
\begin{center}
	\includegraphics[trim = 40 10 40 10  ,scale=0.35]{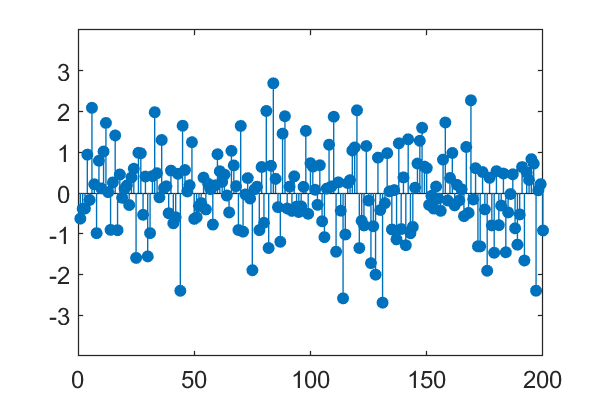}
\includegraphics[trim = 0 10 0 10  ,scale=0.35]{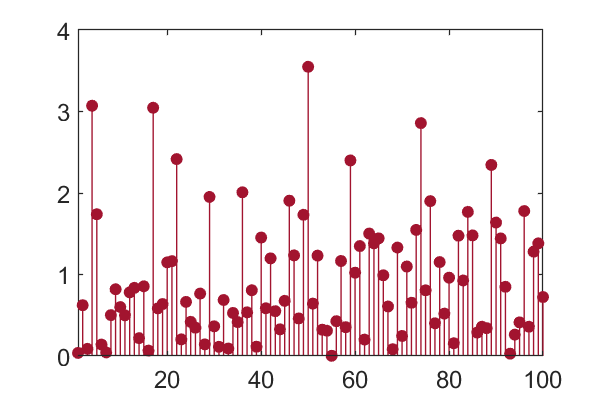}
\includegraphics[trim = 0 15 0 10  ,scale=0.325]{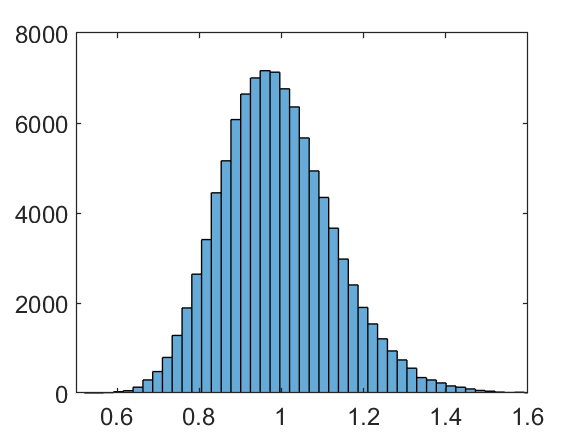}
\end{center}
\caption{\small Left: a random normally distributed vector, $N=200$, $\sigma^2=1$. Center:  plot of $|\hat f|^2$ for indices from $0$ to $100$ ($|\hat f|^2$ is an even function with period $200$). Right: the histogram of $\STD(|\hat f|^2)$ over $100,000$ experiments; it appears centered around $1$.}
\label{fig_power_sp_seq_2}
\end{figure}

Next, we illustrate the spatial (ergodic) behavior of the power spectrum. 
The averaged power spectrum for a normally distributed vector is shown in Figure~\ref{fig_power_sp}. We divide the interval $[0,N/2]$ into $25$ subintervals and average in each one of them. We take $N=10^2, 10^3, 10^4$ and $N=10^5$. As we can see, the averaged spectrum gets flatter and flatter. This illustrates Theorem~\ref{thm_d1}. 
\begin{figure}[ht]
\begin{center}
	\includegraphics[trim = 0 10 0 10  ,scale=0.4]{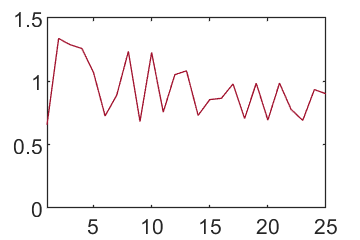}
\includegraphics[trim = 0 10 0 10  ,scale=0.4]{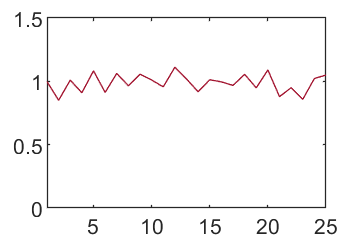}
\includegraphics[trim = 0 10 0 10  ,scale=0.4]{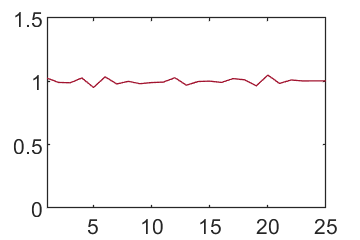}
\includegraphics[trim = 0 10 0 10  ,scale=0.4]{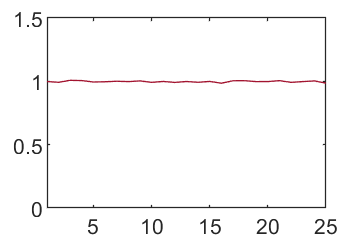}
\end{center}
\caption{\small  Plot of the averaged $|\hat f|^2$ for $N=10^2, 10^3, 10^4$ and $N=10^5$.}
\label{fig_power_sp}
\end{figure}

If we keep $N$ fixed but average over many experiments, the spectrum gets flatter as well numerically, as \r{d1-2} suggests.



\end{document}